\newtheorem{thm}{Theorem}[section]
\newtheorem*{thm*}{Theorem}
\newtheorem{cor}[thm]{Corollary}
\newtheorem{lem}[thm]{Lemma}
\newtheorem{prop}[thm]{Proposition}
\theoremstyle{definition}
\newtheorem{defn}[thm]{Definition}
\newtheorem{nt}[thm]{Notation}
\newtheorem{conv}[thm]{Convention}
\newtheorem{rem}[thm]{Remark}
\newtheorem*{rem*}{Remark}
\newtheorem*{theoremaux}{Theorem \theoremauxnum}
\gdef\theoremauxnum{1}
\def\R{{\mathbb R}}           % The real numbers
\def\O{{\mathcal O}}          % Outer space
\def\F{{\mathcal F}}          % The fores
\def\G{{\mathcal G}}          % The splitting
\def\e{\varepsilon}           % varepsilon
\def\Aut{\operatorname{Aut}}
\def\Out{\operatorname{Out}}
\def\Lip{\operatorname{Lip}}
\def\PL{\operatorname{Str}}
\def\vol{\operatorname{vol}}
\def\Min{\operatorname{Min}}
\def\unf{\operatorname{unf}}
\def\wt{\widetilde}
\title[Minset is locally finite]{The minimally displaced set of
  an irreducible automorphism is locally finite}
\author{Stefano Francaviglia}
\address{Dipartimento di Matematica of the University of
Bologna}
\email{stefano.francaviglia@unibo.it}
\author{Armando Martino}
\address{Mathematical Sciences, University of Southampton }
\email{A.Martino@soton.ac.uk}
\author{Dionysios Syrigos}
\address{Mathematical Sciences, University of Southampton }
\email{d.Syrigos@soton.ac.uk}
\begin{document}

\subjclass{20E06, 20E36, 20E08}

\begin{abstract}
We prove that the minimally displaced set of a relatively irreducible automorphism of a free
splitting, situated in a deformation space, is uniformly locally finite. The minimally displaced set coincides with the train track points for an irreducible automorphism.

%In fact we prove it is uniformly locally finite. %, and that the same is {\em essentially} true in reducible case. 
We develop the theory in a general setting of deformation spaces of free products, 
having in mind the study of the action of reducible automorphisms of a free group on the simplicial
bordification of Outer Space. For instance, a reducible automorphism will have invariant free factors, act on the corresponding stratum of the bordification, and in that deformation space it may be irreducible (sometimes this is referred as 
relative irreducibility).

\end{abstract}
\maketitle
\tableofcontents

\section{Introduction}
\subsection*{Overview}

In this paper we study deformation spaces of marked metric graphs of groups.

Since its first appearance on the scene (\cite{cv}), the celebrated Culler-Vogtmann Outer Space
became a classical subject of research. It turned out to be a very useful tool for
understanding properties of automorphisms of free groups (see for instance~\cite{BestvinaHandel,MR748994, MR721773,MR879856,hatcher,MR1396778,MR2395795}).
A typical object in the Outer Space
of $F_n$ is a marked graph with fundamental group of rank $n$, and locally Euclidean
coordinates are defined by turning graphs into metric graph by an assignment of positive
edge-lengths. Outer Space is not compact and there are basically two ways of going to infinity:
making the marking diverge or collapsing a collection of sub-graphs of a given element $X$ of
Outer Space. The second operation has a local flavour and it is similar to the operation of
pinching a curve of a surface. Attaching these ``collapsed'' points, leads one to define the simplicial bordification of the deformation space. If one starts with Culler-Vogtmann space, the result is the free splitting complex, which is related to the free factor complex (see for instance~\cite{BF1,BR,BG,handelmosher,HMII,HH,KR}).

Collapsing comes naturally into play when one is analysing a reducible automorphism of $F_n$ induced
by simplicial map $f:X\to X$ which exhibits an invariant collection of sub-graphs  of $X$.

Once collapsed, $X$ is turned into a graph of groups corresponding to a free splitting
of $F_n$. On the other hand, the collapsed part is not necessarily connected. This phenomenon has
led researchers to investigate more general deformations spaces. Namely deformation spaces of
(not necessarily connected) graph of groups, possibly with marked points or ``hairs'' (see for
instance~\cite{GuirardelLevitt,FM13,FM18I,FM18II,MR3342683,CKV}).

One of the main tools used to study the action of automorphisms on deformation spaces is
the theory of Stalling folds (\cite{Sta}) and the so-called Lispchitz metric
(\cite{FM11,FM12,MR3342683}). In particular, given an automorphism $\phi$, one can study the
displacement function $\lambda_\phi$ defined as $\lambda_\phi(X)=\Lambda(X,\phi X)$ (here
$\Lambda$ denotes the maximal stretching factor from $X$ to $\phi X$, whose logarithm is the
asymmetric Lipschitz metric). Of particular interest is the set $\Min(\phi)$ of minimally
displaced points. When $\phi$ is irreducible, this coincides with the set of points supporting train-track maps
(\cite{FM18I}) and its structure is particularly useful for example in building algorithm for
decision problems. It is used in~\cite{FM18II} for a metric approach to the conjugacy
problem for irreducible  automorphism of free groups (solved originally in~\cite{MR1396778})
and the reducibility problem of free groups (solved originally in~\cite{K1,K2}).

\subsection*{Main results of the paper}
If one is interested in effective procedures, one of the main problem is that general
deformation spaces have a simplicial structure that is not locally finite. So if one starts
from a simplex and wishes to enumerate neighbouring simplices, there is no chance to make this
procedure effective.

\medskip

In this paper we prove that the minset $\Min(\phi)$ for irreducible automorphisms of exponential
growth is locally finite; namely given a simplex intersecting $\Min(\phi)$,
one can give a finite of its neighbours so that any simplex {\em not} in that list, does not
intersect $\Min(\phi)$. This is the content of our Theorem~\ref{locfin}. Moreover, it is also uniformly locally finite, Corollary~\ref{unif}.

\begin{thm*}[Theorems~\ref{locfin} and \ref{unif}]
	Let $G$ be a group equipped with a free splitting, $\G$. Let $\phi$ be an automorphism of $G$ which preserves the splitting and is irreducible with $\lambda(\phi) > 1$. Then $\Min(\phi)$ - also seen as the points which support train track maps for $\phi$ - is uniformly locally finite both as a subset of the deformation space $\O(\G)$ and its volume 1 subspace, $\O_1(\G)$. 
\end{thm*}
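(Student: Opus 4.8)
The plan is to prove the two theorems in tandem: Theorem~\ref{locfin} produces, for each simplex $\Delta$ meeting $\Min(\phi)$, a finite list of simplices outside of which $\Min(\phi)$ cannot be found, and Corollary~\ref{unif} observes that the size of this list is bounded by a constant depending only on $\phi$ (through its dilatation $\lambda(\phi)$, the rank of the free splitting $\G$, and a bounded-cancellation constant), and that passing to the volume-$1$ slice $\O_1(\G)$ changes nothing combinatorially. So the real content is local finiteness at a single simplex. Since every simplex of $\O(\G)$ has only finitely many faces — their number being bounded by $\dim\O(\G)$ — it suffices to bound the \emph{co-faces}: the simplices $\Sigma$ having $\Delta$ as a face and meeting $\Min(\phi)$. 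Such a $\Sigma$ is obtained from the graph of groups $\Gamma_\Delta$ underlying $\Delta$ by a blow-up, i.e.\ by expanding certain vertices into marked metric graphs of groups and recording the collapse map $q\colon\Gamma_\Sigma\to\Gamma_\Delta$ that crushes the newly created forest $\F$; I must show that only finitely many blow-up data $(\F,\text{marking})$ can carry a point of $\Min(\phi)$.

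First I would establish a \emph{thickness} statement: there is $\e=\e(\phi)>0$ such that every volume-$1$ point of $\Min(\phi)$ lies in the $\e$-thick part of $\O_1(\G)$, namely all of its edge lengths are at least $\e$. Indeed, by the identification of $\Min(\phi)$ with the set of train track points (recalled in the Introduction, following~\cite{FM18I}), a point $X\in\Min(\phi)$ carries a train track map $f\colon X\to X$ representing $\phi$ whose transition matrix $M(f)$ is primitive with Perron--Frobenius eigenvalue $\lambda(\phi)$; minimality of the displacement pins the metric of $X$ to a positive multiple of a Perron--Frobenius eigenvector of $M(f)$. The entries of $M(f)$, the number of edges, and hence the ratio between the largest and smallest entries of the eigenvector are all bounded in terms of $\lambda(\phi)$ and $\dim\O(\G)$, which gives the uniform lower bound $\e$. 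Thickness at once bounds the \emph{combinatorial} type of an admissible blow-up: the forest $\F$ consists of $\e$-long edges inside a volume-$1$ graph, hence of boundedly many edges, so there are only finitely many possibilities for the underlying graph of groups $\Gamma_\Sigma$.

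It remains to bound the \emph{markings}, which is precisely the point at which the ambient deformation space fails to be locally finite, so this step must use the dynamics of $\phi$ decisively and is, I expect, the main obstacle. Fix the combinatorial type of the blow-up and let $Y\in\Sigma\cap\Min(\phi)$, carrying a train track map $g\colon Y\to Y$. Collapsing the blow-up forest, $q\colon Y\to X'$ lands in $\Delta$; since $\Delta$ itself meets $\Min(\phi)$ and $\lambda_\phi\equiv\lambda(\phi)$ on $\Min(\phi)$, Lipschitz continuity of $\lambda_\phi$ keeps $X'$ at bounded Lipschitz distance from $\Min(\phi)\cap\Delta$, whence (via optimal maps) the amount of folding needed to pass between $X'$ and $\phi X'$ is controlled. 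Because $\phi$ is irreducible, $g$ cannot route an edge of $\F$ around a long spiral: applying the bounded-cancellation lemma to $g$, and analysing the interplay of $g$ with the collapse $q$, shows that the element of the blown-up vertex group — equivalently, the turn pattern at that vertex — recorded by the marking of $\Sigma$ relative to that of $\Delta$ has uniformly bounded length. Hence only finitely many markings occur, completing the bound on co-faces, and so the proof of Theorem~\ref{locfin}.

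Finally, for Corollary~\ref{unif} I would verify that every constant produced above — the thickness $\e$, the bound $\dim\O(\G)$, the bounded-cancellation constant, and the resulting count of admissible $(\F,\text{marking})$ — depends only on $\phi$ and $\G$ and not on the chosen simplex; if desired, invariance of $\Min(\phi)$ under the centralizer of $\phi$ reduces matters to finitely many orbits of simplices. Passing to $\O_1(\G)$ merely rescales metrics and therefore meets exactly the same simplices, yielding uniform local finiteness there as well.
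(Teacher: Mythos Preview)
Your overall architecture --- bound the combinatorial type of a co-face, then bound the marking data --- is the right shape, and you correctly identify thickness of $\Min(\phi)$ as a relevant ingredient (the paper uses it too, in Lemma~\ref{numirred}, though only for the \emph{uniform} bound; the number of edges in any simplex is already at most $D(\G)$, so thickness is not needed to bound the combinatorial type).  But the crucial step, bounding the markings, is where your sketch breaks down.

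The difficulty is concrete: a codimension-one co-face of $\Delta$ is $\Delta_\tau$ for a turn $\tau=[a,gb]$ at a vertex $v$, and when $G_v$ is infinite there are infinitely many choices of $g$.  You propose that bounded cancellation applied to the train track $g\colon Y\to Y$ will force the relevant vertex-group element to have ``uniformly bounded length''.  But the $G_i$ in the splitting are arbitrary groups with no intrinsic word metric, so there is nothing for ``bounded length'' to mean; and bounded cancellation is a statement about images of geodesics under $\O$-maps, not about elements of vertex stabilisers.  You have not explained what quantity BCL would bound, nor how that quantity pins down $g\in G_v$.  This is exactly the point where the ambient space is genuinely not locally finite, so a soft continuity/compactness argument cannot close the gap.

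The paper's mechanism is quite different and more combinatorial.  It fixes once and for all a single element $h_v\in G_v$ for each non-free vertex, builds a \emph{finite} set of test loops $A_\Delta$ using only these elements, and declares a turn $\tau$ to be \emph{regular} if it is infinite non-free and never appears in $\phi(A_\Delta)$ (and is legal for a train track on $\Delta$).  The key inequality (Lemma~\ref{L2}) then shows that folding any regular turn strictly increases displacement, via a witness loop $\gamma$ satisfying $\#(f(\gamma),\tau)\le\lambda_\phi(X)\,\#(\gamma,\tau)$; such $\gamma$ is manufactured by explicit edge- and turn-surgeries on loops.  Hence only finitely many \emph{critical} turns can be folded without leaving $\Min(\phi)$, and any co-face in $\Min(\phi)$ is reached by at most $2D(\G)^2$ critical folds (via the unfolding map of Lemma~\ref{unfold} and Proposition~\ref{foldable}).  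Bounded cancellation does appear in the paper, but only inside the proof of Proposition~\ref{foldable} to show a certain supremum is attained --- not as the finiteness mechanism.  Your proposal is missing this entire apparatus, and I do not see how to replace it with the BCL argument you outline.

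A smaller point: your uniformity argument via the centraliser of $\phi$ assumes cocompactness of that action on $\Min(\phi)$, which you have not justified.  The paper instead tracks the explicit constants through Lemmas~\ref{numfold}, \ref{numcrit}, and \ref{numirred}.
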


\begin{rem}
	We note that the number $\lambda (\phi)$ in the Theorem above is the (minimal) displacement of $\phi$ relative to the splitting $\G$. 
	
	For instance, if one takes a relative train track representative for an automorphism of $F_n$, then one gets a free splitting of $G=F_n$ by taking the largest invariant subgraph (the union of all the strata except the top one). The resulting automorphism is irreducible in the corresponding relative space, and the number $\lambda(\phi)$ is the Perron-Frobenius eigenvalue of the top stratum. 
\end{rem}
%We recall that in the irreducible case, $\Min(\phi)$ coincides with the train track points for $\phi$.

\medskip

The application we have in mind for this kind of result is an effective study of the minset for a
reducible automorphism.

In general this minset is empty, but starting with  a reducible automorphism $\phi$ of a free group, one can collapse an invariant free factor to obtain a new deformation space on which the automorphism acts. If $\phi$ is relatively irreducible in that space, then its minset is locally finite. Otherwise, one can keep collapsing free factors until it is relatively irreducible.

It is easy to see that the minset for a reducible automorphism is not locally finite in
general, however for any automorphism and any simplex with a given displacement, there are only
finitely many possible simple folds which produce simplices of strictly smaller displacement - Corollary~\ref{FoldingCandidateRegular1}. 

The idea behind Theorem~\ref{locfin} is the following. For a minimally displaced point $X$, it is
known that folding an illegal turn of an optimal map $f:X\to X$ representing $\phi$ produces a
path in $\Min(\phi)$, called folding path. (See for instance~\cite{FM13,FM18I}). But it is also
clear that there are legal turns that can be folded without exiting $\Min(\phi)$, for instance,
this may happen at illegal turns for $\phi^{-1}$. The strategy is to understand which legal turns can be folded, and we are able to produce a finite list such that if a turn
$\tau$ is not in that list, then by folding $\tau$ one exits the minimally displaced set. In our terminology, folding a  {\em critical} turn could allow one to remain in the minset, whereas folding a {\em  regular} turn forces one to leave it. One then understands arbitrary neighbouring simplices, by looking at which of them can be reached by a (uniformly bounded) number of critical folds - these are the only ones that may be minimally displaced. However, one complication is that it is possible that a critical fold could {\em increase} the displacement, and a subsequent critical fold decrease it so that one re-enters the minimally displaced set. Nevertheless, our result produces a finite list containing all neighbouring simplices that are minimally displaced. 

\begin{rem}
We have written the paper for deformation spaces of free splittings of $G$, namely connected graph of
groups with trivial edge-groups. However, every result
of the paper remains true for deformations spaces of non-connected graph of groups, as
developed  for instance in~\cite{FM18I,FM18II}. This is because connectedness plays no role in our proofs. (In those papers, non-connectedness was crucial since the main argument was an inductive one.) %(And indeed
%a first version of the paper was written using that terminology). 
  Nonetheless, we decided to stick to the connected case for the benefit of the reader.  
\end{rem}

\subsection*{Structure of the paper}
We have decided to write the paper in a reverse order; we start immediately with the core of
the paper, postponing the section of general definitions to the end. This is because the
definitions and terminology we use are quite standard, and the reader used to the subject can
start reading directly.

\section{Preliminaries}
We recall some notation here and, we refer to Section~\ref{definitions} for more details.
\begin{conv} Deformations spaces - here, of free splittings, even though the concept exists more generally - can be viewed either as spaces of trees, or graphs. We adopt
  here  the graphs-viewpoint, but one can easily pass
  from one viewpoint to the other by taking universal covers and $G$-quotients (see below for more details).
  
  Throughout the whole paper, $\G$ will denote a fixed free splitting of the group $G$. 
  That is, we write $G= G_1* \ldots * G_k*F_n$, but this need not be the Grushko decomposition of $G$. In fact, in the examples we have in mind, $G$ is a free group, and the free factors $G_i$ correspond to a collection of invariant free factors under some automorphism of $G$.
  $\O(\G)$ will denote the deformation space of a free splitting of a group $G$.

  The typical object $X\in \O(\G)$ is 
therefore a marked metric graph of groups, with trivial edge groups,
and whose valence one or two vertices have non-trivial vertex group. (One can also think of $X$ as a $G$-tree with trivial edge stabilisers, where the vertex stabilisers are precisely the conjugates of the $G_i$. Elements of $G$ which fix some vertex are called elliptic, and the others are hyperbolic.) Note that every $X\in \O(\G)$ has the same elliptic elements (and this characterises the points in the space). 
For a vertex $v\in X$ we denote $G_v$ its vertex group. If $G_v$ is trivial, then $v$ is said to be {\em free}.

These spaces -  $ \O(\G)$ - naturally occur in the bordification of
classical Culler-Vogtmann Outer Space, on collapsing invariant subgraphs.

We denote by $\Aut(\G)$ the group of automorphisms of $G$ which preserve the splitting; that is, each $G_i$ in the splitting is sent to a conjugate of another (possibly the same) $G_j$. That is, $\Aut(\G)$ the group of automorphisms of $G$ which preserve the elliptic elements. Similarly, $\Out(\G) = \Aut(\G)/ \operatorname{Inn}(G)$.
\end{conv}

\subsection{Graph of Groups and $G$-trees}
We recall some basic notions of Bass-Serre theory. The main references for this section are \cite{Bass} and \cite{Serre}.
Given a graph $\Gamma$, we denote by $V(\Gamma)$ the set of vertices of $\Gamma$ and by $E(\Gamma)$ the set of (oriented) edges of $\Gamma$. If $e$ is an (oriented) edge, we denote by $\iota(e)$, the initial vertex of $e$ and by $\tau(e)$, the terminal vertex of $e$.

\begin{defn}[Graph of Groups]
A graph of groups $X$ consists of a connected graph $\Gamma$ together with groups $G_v$ for every vertex $v \in V(\Gamma)$ and edge groups $G_e = G_{\bar{e}}$, and monomorphisms $\alpha_e : G_e \to G_{\tau(e)} $ for every (oriented) edge $e \in E(\Gamma)$.
\end{defn}

In this paper, we work with free products of groups, which by Bass-Serre theory, arise as the fundamental groups of graph of groups with trivial edge groups. For the remainder of the section, we suppose that all the graph of groups have trivial edge groups. In this case, we can see a graph of groups $X$, as a pair which consists of a graph $\Gamma$ and a collection of groups $G_v$, one for each $v$ vertex of $\Gamma$; we simply write $X = (\Gamma,\{G_v\} _{v \in V(\Gamma)})$. We refer to $\Gamma$, as the topological space (or the graph) associated to $X$.

Let $X = (\Gamma,\{G_v\} _{v \in V(\Gamma)})$ be a graph of groups.
An \textit{edge- path} $P$ (of combinatorial length $k \geq 0$) in $X$ is a sequence of the form $ (g_1,e_1,g_2,e_2,\dots,g_k,e_k,g_{k+1})$ where the $e_i$'s are oriented edges of $\Gamma$ such that the terminal point of each $e_i$ is the
initial point of $e_{i+1}$, and each $g_i$ is a group element from the vertex group based at the initial point of $e_i$; in this case, we simply write $P = g_1e_1\dots g_k e_k g_{k+1}$. In the present work, we also use ``paths'' which are not edge-paths, as their endpoints may not be vertices. The previous definition is extended from edge-paths to \textit{paths}, by allowing the first and the last segments $e_1,e_k$ of $P$ to be partial edges (note that if $\iota(e_1)$ is not a vertex, then we set $g_1$=1 and, similarly, if $\tau(e_k)$ is not a vertex then we set $g_{k+1} = 1$).
%If $ (g_1,e_1,g_2,e_2,\dots,g_k,e_k,g_{k+1})$ is an edge path and $p,q$ are "partial" edges (segments which are contained in edges) so that the terminal vertex of $p$ is the same as the initial vertex of $e_1$ and the initial vertex of $q$ is the same as the terminal vertex of $e_k$, then the sequence $(p,g_1,e_1,g_2,e_2,\dots,g_k,e_k,g_{k+1},q)$ is called a \textit{path} in $\Gamma$.
A path is called \textit{loop}, if the endpoint of its last edge coincides with the initial point of its first edge.
We say that a path $P$, as above, is \textit{reduced}, if whenever $e_i = \bar{e}_{i+1}$, $i=1,\dots,k-1$, then $g_i$ is not the trivial group element. Furthermore, we say that a loop $P = g_1 e_1 g_2 e_2 \dots g_k e_k g_{k+1}$ is \textit{cyclically reduced} if it is reduced and if $e_k = \bar{e}_1$ then $g_{k+1} g_1 \neq 1$.

We can always represent the conjugacy class of a loop in the form $g_1 e_1 g_2 e_2 \dots g_ke_k$; in this case, being cyclically reduced means that whenever $e_i = \bar{e}_{i+1}$, then $g_i$ is not the trivial group element, with the subscripts taken modulo $k$.

We say that two paths are {\em equivalent} if one can be obtained from the other by a sequence of insertions or deletions of inverse-pairs. We then denote by $\pi(X)$, the set of equivalence classes of edge paths. 
%By considering the edges of $\Gamma$, as generators of a free group (under the relation $\bar{e} = e^{-1}$, for every $e \in E(\Gamma)$), 
Then $\pi(X)$ has the structure of a groupoid, whose operation is the concatenation of paths. Note that $\pi(X)$ is not a group in general, for if $p,q \in \pi(X)$, then the concatenation $p*q = pq$ is only defined exactly when the terminal vertex of $p$ is the initial vertex of $q$. In our setting, every path $p$ is equal (in $\pi(X)$) to a unique reduced path (with the same endpoints), which we denote by $[p]$.

(Alternatively, we can take $\pi(X)$ to be the set of reduced edge-paths, under the operation of concatenation followed by reduction.) 

If $a,b$ are two points of $\Gamma$ (not necessarily vertices), we can define the set $P[a,b]$, of paths in $X$, from $a$ to $b$. If $a,b$ are vertices, then $P[a,b]$ can be seen as a subset of $\pi(X)$. In the special, case where $a=b$, the set of loops based at $a$, $P[a,a]$, form a subgroup of $\pi(X)$ (as any two loops based at $a$, can be concatenated, inducing a loop based at $a$). 

\begin{defn}[Fundamental group]
Let $X = (\Gamma,\{G_v\} _{v \in V(\Gamma)})$ be a graph of groups. If $u$ is a vertex of $\Gamma$, then the fundamental group $\pi_1(X,u) = P[u,u]$ of $\Gamma$ is the set of loops in $\Gamma$ based at $u$ (as elements of $\pi(X)$). In our case, $$\pi_1(X,u) =  (\ast_{v \in V(\Gamma)} G_v) \ast \pi_1 (\Gamma,u).$$
\end{defn}

Note that the isomorphism type of $\pi_1(X,u)$ does not depend on the choice of $u$. If $u,w$ are vertices of $\Gamma$, then $\pi_1(X,u), \pi_1(X,w)$ are conjugate in $\pi(X)$ (and we simply write $\pi_1(X)$, when we ignore the base point).

We can now define the universal cover (or Bass-Serre tree) of a graph of groups.
\begin{defn}[Universal Cover]
Let $X = (\Gamma,\{G_v\} _{v \in V(\Gamma)})$ be a graph of groups. If $v$ is a vertex of $\Gamma$, then we define the tree $T = \widetilde{(X,v)} = \widetilde{X}$, as follows:
\begin{itemize}
	\item The set of vertices $V(T)$ are exactly the ``cosets'' $pG_w$, $w \in V(\Gamma)$, where $p$ is a reduced edge- path from $w$ to $v$.
	\item There is an (oriented) edge from $p_1G_{w_1}$ to $p_2G_{w_2}$, if and only if there are elements $g_i \in G_{w_i}, i=1,2$ and an edge $e \in E(\Gamma)$, so that $p_2$ = $p_1g_{1}eg_2$.  
\end{itemize}
\end{defn} 

\begin{rem}
\begin{enumerate}[{i)}] \
	
	\item As every element of $G =\pi_1(X,v)$ is a loop based at $v$, there is natural simplicial (left) action of the fundamental group on the universal cover $T$.
	
	\item If $w$ is a vertex of $\Gamma$, then there is a unique (reduced) path $p_w$ of $\Gamma_V$, from $v$ to $w$. Then the stabiliser $Stab_T(pG_w) = pG_wp^{-1}$
	
	\item The stabilisers of edges are, by construction, trivial.
	
	\item One can define arbitrary points of $T$ in exactly the same way, by allowing the paths $p$ to be general paths (not just edge-paths) from $v$ to $w$, with the convention that $G_w$ is trivial if $w$ is not a vertex. 
\end{enumerate}
\end{rem}

Given a graph of groups with trivial edge groups, we described the action of its fundamental group on a tree with trivial edge stabilisers (and vertex stabilisers which coincide with the set of conjugacy classes of the vertex groups).

Conversely, if a group $G$ acts on a tree with trivial edge stabilisers, we can construct a graph of groups with fundamental group $G$.
\begin{defn}[Quotient graph of groups]
Let's suppose that $G$ acts on a tree $T$ with trivial edge stabilisers. Then we can define the quotient graph of groups $X$, by taking the quotient graph $\Gamma = G/T$ and as vertex groups the stabilisers of some orbit of each vertex (and trivial edge groups). In this case, $\pi_1(X)$ is (isomorphic to) $G$.
\end{defn} 
 
\begin{rem}
By Bass-Serre theory, we know that these two constructions (graph of groups and the universal cover) are equivalent and we can go from the level of the tree to the level of the graph of groups and vice versa. (We have done some choices of fundamental domains which are inessential, as they produce isomorphic structures, every time.)
\end{rem}

Now we restrict to graph of groups in the same relative outer space, as in our Convention. More specifically, we assume that our graph of groups, will be $\G$-graph of groups (see Definition \ref{G-graphs}, for more details), for the fixed splitting $\G$ of our group $G$.

As in our description for the $\G$-trees (see \ref{definitions}), every $\G$-graph as a point (simplex) of $\O(\G)$, is equipped with a marking.

\begin{defn}
A $\G$-graph dual to a $\G$-tree. Namely, it is a finite connected $G$-graph of groups $X$, along with an isomorphism, $\Psi_X: G \to \pi_1(X)$ - a marking - such that:
\begin{itemize}
	\item $X$ has trivial edge-groups;
	\item the fundamental group of $X$ as a topological space is $F_n$;
	\item the splitting given by the vertex groups is equivalent, via $\Psi_X$, to $\G$. That is, $\Psi_X$ restricts to a bijection from the conjugacy classes of the $G_i$ to the vertex groups of $X$.
\end{itemize}
\end{defn}

%
%\begin{defn}[Marked Graph of Groups]
%A marking of a $\G$- graph $X$ is an isomorphism $\psi_X : G \to \pi_1(X)$ (well defined up to composition with inner automorphisms), which preserves the splitting $\G$.
%\end{defn}
% 

The following definition is not the definition of the morphism that is given by Bass in \cite{Bass}, as he requires the map to be a graph morphism (sending edges to edges and vertices to vertices). However, in our case, his method can be easily adjusted for the more general maps that we define.

\begin{defn}[Maps Between Graphs of Groups]

Let $X = (\Gamma, \{G_v\}), Y = (\Gamma ', \{H_w\})$ be two (marked) $\G$-graphs.

A map $F $ between $X,Y$ consists of:

\begin{enumerate}
	\item  Two maps $f_V,f_E$:
\begin{enumerate}
	\item $f_V : V(\Gamma) \to \Gamma'$ and
	\item  For each edge $e$ with $\iota(e) = v, \tau(e) = v' $, $f_E(e)$ is a path from $f_V(v)$ to $f_V(v')$.
\end{enumerate}
\item  Isomorphisms between the vertex groups:
$\phi_v : G_v \to H_{f_V(v)}$ (by abusing the notation, we set $H_{f_V(v)} = 1$, if $f_V(v)$ is not a vertex).

%\item Two families of paths $\{gamma_v\}_{v \in V(\Gamma)}, \{gamma_e\}_{e \in E(\Gamma)}$ in $\pi(\Gamma ' )$. 
\end{enumerate}

\end{defn}

Note that after sub-dividing some edges of the co-domain, we can suppose that our maps are simplicial. For any two $\G$-graphs $X,Y$, a map $F: X \to Y$, induces a (natural) homomorphism $\Phi_F: \pi(X) \to \pi(Y)$ between the set of paths, which restricts to a homomorphism $\Phi_F: \pi_1(X) \to \pi_1(Y)$ between the fundamental groups (well defined up to composition with inner automorphisms). The maps that we define in the following definition, play the role, at level of $\G$-graphs, of $G$-equivariant maps between $\G$-trees.

\begin{defn}
We say that a map $F: X \to Y$ is a $\G$-map, if the induced homomorphism $\Phi_F$ is the ``change of markings'', i.e. $\Phi_F \Psi_X  = \Psi_Y $ (up to composition with inner automorphisms).
\end{defn}

\begin{rem}
\begin{enumerate}[i)] \ 
\item By Bass-Serre theory, it's clear now that the notion of $\G$-maps at the level of marked $\G$- graphs is equivalent to the notion of $G$-equivariant ``straight" maps at the level of $\G$-trees - that is, equivariant, surjective, continuous maps that are determined by the images of vertices by extending linearly over edges. In fact, we can move from one to the other by simply considering lifts and projections, using Bass-Serre theory (even if the lifts and the projections are not unique, as they depend on the choice of some fundamental domain, all of them are equivalent).
\item Note that we cannot talk about continuity between graph of groups, but we could alternatively consider the equivalent notion of graph of spaces (we simply change the vertex groups $G_i$ with a topological space $\Gamma_i$, with $\pi_1(\Gamma_i) = G_i$). In that case,  the maps between two graph of spaces would be continuous.
\end{enumerate}

\end{rem}

%\begin{rem}
% Given a $\G$-map $F: X \to Y$, we have a surjective homomorphism  between the set of paths, so, by construction of the universal covers, we have a map $f: \widetilde{X} \to \widetilde{Y}$ between the vertices (recall that we have sub-divided the edges in the co-domain, so some vertices could be fake). Also, $f$ is $G$-equivariant , as $F$ is the ''change of markings''.
% Conversely, any $G$-equivariant map $f:T \to S$ between two $\G$ trees $T,S$, can be projected to a $\G$- map between $F: G/T \to G/ S$.
%\end{rem}

\subsection{Basic Notation}

\begin{nt} We will use the following standard notation:
  \begin{itemize}
  \item   $\O_1(\G)$ the volume one subspace of $\O(\G)$.
    \item $\Delta$ denotes an open simplex of $\O(\G)$.
  \item $\Delta_X$ denotes the simplex with underlying graph of groups $X$.
\item $\bar e$ denotes the inverse of  oriented edge $e$.  Same notation for paths.
\item $\gamma\cdot\eta$ denotes concatenation of paths.
\item $L_X(\gamma)$ denotes the reduced length in $X$ of a loop $\gamma$, if $X$ is seen as a
  $G$-graph.
  \item Folding a turn $\{a,b\}$ by an amount of $t$ means identify initial segments of $a$ and
  $b$ of length $t$.  This is always well-defined for small enough $t$.
\item Given an automorphism $\phi \in \Out(\G)$, $\lambda_\phi:\O(\G)\to\R$ denotes the displacement
  function $\lambda_\phi(X)=\Lambda(X,\phi X)$ (this is well defined as the inner automorphisms act trivially). For a simplex $\Delta$ we set
  $\lambda_\phi(\Delta)=\inf_{X\in\Delta}\lambda_\phi(X)$; we set
  $\lambda(\phi)=\inf_{X \in \O(\G)}\lambda_\phi(X)$.
\item An $\O$-map between elements of $\O(\G)$ is a map that realises the difference of
  markings. A straight map between elements of $\O(\G)$ is an $\O$-map with constant speed on
  edges. (See the definitions section on page~\pageref{definitions}).
\end{itemize}

\end{nt}

Since we decided to adopt the graphs-viewpoint, some words of explanation are needed about turns.
A turn at a non-free vertex $v$ of $X$
is given by the equivalence class of unoriented pair $\{g_1e_1,g_2e_2\}$, where $e_1,e_2$ are
 (germs of) oriented edges with the same initial vertex, $v$; $g_1,g_2$ are elements in the vertex-group $G_v$, and the equivalence
relation is given by the diagonal action of $G_v$. If we think $X$ as a $G$-tree, a turn at a non-free vertex $v$, is a $G_{v}$-orbit of an unoriented pair of edges $\{g_1e_1,g_2e_2\}$ where $g_1,g_2 \in G_{v}$ and $e_1, e_2$ are oriented edges of $X$ emanating at $v$. In other words, the projection of a turn in a tree is a turn in the quotient graph of groups, and, conversely, the lift of a turn in the graph of groups is a turn in the universal cover.
In any case, we denote the turn given by the class of
$\{a,b \}$ simply $[a,b]$. (Note that $[a,b]=[b,a]=[ga,gb]$)	
	
For the convenience of the reader, in this section we give the definitions for both points of view, by thinking $X$ both as $\G$-graph of groups and as $\G$-tree. More specifically, everything in this section is written, the graphs-viewpoint and we translate into the language of trees, if needed. In the rest of the paper, we use these two notions interchangeably, as everything can be easily translated between these two point of views.

\begin{defn}
  Let $X\in\O(\G)$. A turn $[x,x]$ is said {\bf trivial}.
  A turn $\tau=[a,gb]$ at a vertex $v$ of $X$ is called {\bf degenerate}, if $a=b$ (resp.,  if $a,b$ are in the same $G_v$-orbit); it is called non-degenerate otherwise. If $e$ is an edge starting {\em and} ending at $v$ (resp., if both endpoints of $e$ are in the same $G$-orbit, as $v$), it determines a non-degenerate turn at $v$. 
\end{defn}

\begin{defn}
Given a straight map $f: X \to Y$, at the graph level, we say that $f$ maps the turn $[a,gb]$ to the turn $[c,hd]$, if the initial paths of (combinatorial) length $1$ in the graph of groups (resp., if the initial edge or germ of edge), of $f(a)$ and $f(gb)$ are $c$ and $hd$ (in some order).

We will sometimes abuse notation and say that $f$ maps the turn $[a,gb]$ to the turn $[f(a), f(gb)]$, even though we really mean this to be the initial sub-paths of combinatorial length $1$ of the (in general) paths given.

We say that $[a,gb]$ is $f$-legal, if $f$ maps $[a,gb]$ to a non-trivial turn. If $f$ maps either of $a$ or $b$ to a vertex, then we say the turn is illegal. 

If, moreover, $X=Y$, then we say that $[a,gb]$ is 
$\langle\sim_{f^k}\rangle$-legal if $f^k$ maps $[a,gb]$ to a non-trivial turn for all integers $k \geq 1$. 
\end{defn}

\begin{lem}
  Let $X\in\O(\G)$ and $\tau=[a,gb]$ be a non-degenerate turn at a vertex $v$. Then
  (equivariantly) folding $a$ and $gb$ gives a new element in $\O(\G)$. 
\end{lem}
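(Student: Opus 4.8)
The plan is to pass to the Bass--Serre tree $T=\wt X$, perform the corresponding $G$-equivariant partial fold there, and check directly that the resulting metric $G$-tree $T_t$ is again a point of $\O(\G)$, that is, a minimal simplicial $G$-tree with trivial edge stabilisers and exactly the same elliptic elements as $T$. By the characterisation of points of $\O(\G)$ through their elliptic elements, and by Bass--Serre theory, such a $T_t$ corresponds to a marked $\G$-graph of groups, the marking being read off from the fold map; translated back to the graph level this is exactly the statement of the lemma.

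First I would set up the fold. Lift $v$ to $\wt v\in T$ and lift the two germs of $\tau$ to germs $a$ and $gb$ of edges at $\wt v$; non-degeneracy says these lie in distinct $\stab(\wt v)$-orbits. For $t>0$ small enough --- so that $t$ is below the lengths of $a$ and $gb$ and distinct $G$-translates of the pair $\{a,gb\}$ do not overlap, which is possible by the Notation --- let $q\colon T\to T_t$ be the map identifying, $G$-equivariantly, the initial length-$t$ segments of $a$ and $gb$ (and of all their $G$-translates), with the folded edge given length $t$ and the two residual sub-edges keeping their lengths. Two translates $\{h_1a,h_1gb\}$ and $\{h_2a,h_2gb\}$ can overlap in an edge only when $h_1=h_2$, since edge stabilisers of $T$ are trivial and, by non-degeneracy, no element of $G$ fixes $\wt v$ and carries $a$ to $gb$; so $q$ is an honest equivariant fold. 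It is a homotopy equivalence, hence (after subdividing so that it has constant speed on edges) an $\O$-map realising the change of markings, and $T_t$ is a tree, because folding merely replaces the two edges $a,gb$ at $v$ by a common initial edge ending at a new vertex $p$ followed by the two residual sub-edges --- an operation that changes neither the connectivity nor the Euler characteristic of the quotient graph, so $G\backslash T_t$ is a finite graph of the same first Betti number as $G\backslash T$.

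The substantive step, which I expect to be the main obstacle, is to verify that $T_t$ lies in the same deformation space, and this is exactly where non-degeneracy enters a second time. For edge stabilisers: an edge of $T_t$ outside the $G$-orbit of the folded edge $\bar e$ has $q$-preimage a single edge of $T$, hence trivial stabiliser; and $q^{-1}(\bar e)$ is the length-$2t$ arc obtained by gluing the two length-$t$ segments at $\wt v$, so an element stabilising $\bar e$ either fixes that arc pointwise --- hence lies in an edge stabiliser of $T$ and is trivial --- or reverses it, hence fixes $\wt v$ and carries $a$ onto $gb$, which non-degeneracy forbids. In particular the new vertex $p$ is free. For the elliptic elements: $q$ is $G$-equivariant, so everything elliptic in $T$ stays elliptic in $T_t$; conversely, if $g$ fixes $y\in T_t$ then $q^{-1}(y)$ is a $g$-invariant set with at most two points --- and, when it has two, they lie on $a$ and on $gb$ at equal distance from $\wt v$, hence are symmetric about $\wt v$ --- so $g$ either fixes one of them, and is already elliptic in $T$, or swaps them, and then fixes $\wt v$ while moving $a$ to $gb$, again contradicting non-degeneracy. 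Hence $T_t$ and $T$ have the same elliptic elements.

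Finally I would note that $T_t$ is minimal and has no forbidden vertex: the only delicate case is that $v$ was a free vertex of valence three, where the fold leaves it with valence two and trivial group, and as usual one simply erases such a subdivision point, which does not change the point of $\O(\G)$. This establishes $T_t\in\O(\G)$ and finishes the proof. Everything could instead be carried out directly on $X$ using the groupoid $\pi(X)$, but the tree picture makes the use of the non-degeneracy hypothesis most transparent.
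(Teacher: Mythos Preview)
Your proof is correct. The paper itself does not supply an argument here: its proof reads in full ``The proof is straightforward and left to the reader.'' Your write-up therefore fills in exactly what the authors omitted, and does so along the expected lines --- passing to the Bass--Serre tree, performing the equivariant fold, and using non-degeneracy twice (once to ensure the new edge has trivial stabiliser, once to ensure no new elliptics are created). One small remark: in your elliptic-elements paragraph, the ``swap'' case already forces $g$ to fix $h\wt v$ and hence to be elliptic in $T$, so the appeal to non-degeneracy there is not strictly needed (though it is not wrong, since that case is indeed impossible); the place where non-degeneracy is genuinely essential is the edge-stabiliser check, which you handle correctly.
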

\begin{proof}
The proof is straightforward and left to the reader.
\end{proof}
\begin{rem}
  Note that if $e$ is an edge emanating from $v$, a non- free vertex of $X\in\O(\G)$, and if $g\in
  G_v$ is such that $<g>\neq G_v$, then by folding a degenerate turn $[e,ge]$ we obtain a tree with
  non-trivial 
  edge groups(resp., stabilisers). Namely, the new edge $e$ emanating from $v$ has edge group $<g>$ (resp., stabiliser).

  Therefore, in practice, a non-degenerate turn is the same as a ``foldable'' turn. 
  %We decided
  %not to use the word ``foldable'' in place of ``non-degenerate'' because ``foldable'' is used in literature with different %meaning. 
\end{rem}

\begin{rem}
	\label{actions}
Suppose that $X \in\O(\G)$ and $f:X\to X$ is a straight $\O$-map. By this we mean that there are two $G$ markings on $X$ (resp., $G$-actions), and $f$ is $\G$-map (resp., $G$-equivariant) with respect to the two different markings (resp., actions) (both of which lie in the same deformation space, and hence have the same elliptic elements). 

We will describe the change of action in more detail (at graph level, we have a very similar description, by changing the marking, instead of changing the action).
Let's give these names; we will denote the first action by $\cdot$ and the second action by $\star$. Then we will always have that there is an element, $\phi \in \Aut(\G)$ such that, 

$$
f(g \cdot x) = g \star f(x) = \phi(g) \cdot f(x).
$$

Then, on iterating $f$, we get, 
$$
f^r(g \cdot x)= \phi^r(g) \cdot f^r(x). 
$$

\end{rem}

\medskip

Note that if $e$ is an edge emanating from a vertex $v$, then for every $\G$-map  $f$ (resp., $G$-equivariant map),
the degenerate turn $[e,ge]$ is $f$-legal for any $g\neq Id\in G_v$, as long as $f$ does not map $e$ to a vertex.

\begin{lem}\label{L0}
Let $X,Y\in\O(\G)$ and $f:X\to Y$ be a straight $\O$-map. Let $v$ be a vertex of $X$ and $G_v$ its
stabiliser. Let $\tau=[a,gb]$ be a turn at $v$, such that neither $f(a)$ nor $f(b)$ is a single vertex. If $\tau$ is $f$-illegal,  then
for any $g'\neq g\in G_v$ the turn $[a,g'b]$ is $f$-legal.
If $X=Y$ and $\tau$ is $\langle\sim_{f^k}\rangle$-illegal, then
for any $g'\neq g\in G_v$ the turn $[a,g'b]$ is $\langle\sim_{f^k}\rangle$-legal.
\end{lem}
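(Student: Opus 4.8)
The plan is to deduce everything from one structural fact about $\O(\G)$: edge stabilisers are trivial, so a non-trivial element of a vertex group fixes no edge.

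For the first assertion, if $G_v$ is trivial there is no $g'\neq g$ to consider, so I may assume $v$ is non-free. Then the vertex-group homomorphism attached to $f$ at $v$ is an isomorphism $\phi_v\colon G_v\to H_{f(v)}$, so in particular $f(v)$ is a vertex. Since $f(a)$ and $f(b)$ are not single vertices they have well-defined initial germs $a'$ and $b'$, which are edges at $f(v)$, and since $f$ carries the $G_v$-twist $g$ to $\phi_v(g)$, the map $f$ sends the turn $[a,gb]$ to $[a',\phi_v(g)\,b']$. Hence $\tau$ being $f$-illegal means exactly that $a'=\phi_v(g)\,b'$. If $[a,g'b]$ were also $f$-illegal we would get $a'=\phi_v(g')\,b'$ as well, hence $\phi_v(g^{-1}g')$ fixes the edge $b'$; triviality of edge stabilisers forces $\phi_v(g^{-1}g')=1$, and injectivity of $\phi_v$ gives $g=g'$, a contradiction. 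So $[a,g'b]$ is $f$-legal.

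For the second assertion ($X=Y$) the idea is to apply the first one to each power $f^k$ — which is again a straight $\O$-map (straighten if necessary; this changes no turn) — and then glue the resulting information over all $k$. For a fixed $k$, the first assertion already tells us that $f^k$ cannot send both $[a,gb]$ and $[a,g'b]$ to a trivial turn. To combine this over all $k$ I would use that the property ``$f^k$ sends $[a,gb]$ to a trivial turn'' is monotone in $k$: if $f^{k_0}(a)$ and $f^{k_0}(gb)$ have common initial germ $\rho$, then $f^{k_0+1}(a)$ and $f^{k_0+1}(gb)$ both begin with $f(\rho)$ and so still share their initial germ, and inductively $f^k$ sends $[a,gb]$ to a trivial turn for all $k\geq k_0$; the same holds verbatim for $[a,g'b]$. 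Granting this, assume $\tau=[a,gb]$ is $\langle\sim_{f^k}\rangle$-illegal, so $f^{k_0}$ sends $\tau$ to a trivial turn for some $k_0$, hence for every $k\geq k_0$. Now fix $g'\neq g$ and $k\geq 1$. If $k\geq k_0$ then $f^k$ sends $[a,gb]$ to a trivial turn, so by the first assertion applied to $f^k$ it sends $[a,g'b]$ to a non-trivial one. If instead $k<k_0$ and $f^k$ sent $[a,g'b]$ to a trivial turn, then by monotonicity $f^{k_0}$ would send $[a,g'b]$ to a trivial turn too, contradicting — via the first assertion applied to $f^{k_0}$ — that $f^{k_0}$ sends $[a,gb]$ to a trivial turn. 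Either way $f^k$ sends $[a,g'b]$ to a non-trivial turn for all $k\geq 1$, i.e. $[a,g'b]$ is $\langle\sim_{f^k}\rangle$-legal.

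The routine parts (passing between the tree and graph-of-groups pictures, and the fact that $f$ acts on germs compatibly with the twist via $\phi_v$) are standard. The one step I expect to require care is the monotonicity claim — equivalently, that the identifications $\sim_{f^k}$ increase with $k$, and that $f^k(a),f^k(b)$ remain non-degenerate so that the first assertion is applicable to $f^k$ — since this is precisely where one uses that $f$ does not collapse the leading edges of the images in question.
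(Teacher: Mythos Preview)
Your argument is correct and is essentially the paper's own proof, organised in the opposite order: you prove the first claim directly from triviality of edge stabilisers and then bootstrap to the $\langle\sim_{f^k}\rangle$ case via monotonicity, whereas the paper proves the $\langle\sim_{f^k}\rangle$ case first (observing that at the critical power $r$ the image of $[a,g'b]$ becomes a \emph{degenerate} turn $[c,hc]$ with $h\neq 1$, hence legal under all further iteration, and then using monotonicity backwards for $m<r$) and recovers the first claim as the special case $r=1$. Your ``monotonicity'' is exactly what the paper invokes as ``$f$-images of illegal turns are $f^n$-illegal for any $n$'', and the non-collapsing issue you flag at the end is glossed over in the paper as well.
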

\begin{proof} Let's prove the second claim first.
  Since $\tau$ is $\langle\sim_{f^k}\rangle$-illegal, then there is some power $r\geq1$ so that $f^r(a)$
  and $\phi^r(g) f^r(b)$ are the same germ - we are using the automorphism $\phi$ as in Remark~\ref{actions}. It follows that  $[f^r(a), \phi^r(g') f^r(b)]$ is degenerate and
  legal. Since $f$ is an $\O$-map, it follows that $f^{r+l}(\tau)=[f^{r+l}(a), \phi^{r+l}(g')f^{r+l}(b)]$ is degenerate
  and legal for any $l\geq 0$. Since $f$-images of illegal turns are $f^n$-illegal for any
  $n$, then $[f^m(a), \phi^m(g')f^m(b)]$ is legal also for $m\leq r$.

  First claim now follows by exactly the same argument with $r=1$ and $l=0$.
\end{proof}

\begin{defn}
  Let $X\in\O(\G)$ and let $\tau$ be a turn of $X$. For any loop $\gamma$ in $X$  (resp., a path with endpoints which lie in the same $G$-orbit)
  we denote by $$\#(\gamma,\tau)$$ the number of times that the {\bf cyclically reduced} representative of
  $\gamma$ crosses $\tau$. We recall that $\tau$ is not an oriented object, so we do
  not take in   account crossing directions.   
\end{defn}

The following lemma is almost tautological, but important for our purposes. 
\begin{lem}\label{LCR}
  Let $X,Y\in \O(G)$ and $f:X\to Y$ any straight map. If $\gamma$ is a $f$-legal path in $X$,
  then it is reduced. 
\end{lem}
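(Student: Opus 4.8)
The plan is to prove Lemma~\ref{LCR} by contraposition: assume that $\gamma$ is not reduced and show that it cannot be $f$-legal. Since $\gamma$ is a loop (or a path with endpoints in the same $G$-orbit), being non-reduced means that in its edge-path expression $\gamma = \dots g_i e_i g_{i+1} e_{i+1} \dots$ there is an index $i$ with $e_{i+1} = \bar e_i$ and $g_{i+1}$ trivial — that is, $\gamma$ backtracks at some vertex $v = \tau(e_i)$, crossing the turn $\tau = [\,\bar e_i,\, g_{i+1}\bar e_i\,]$ with $g_{i+1} = 1$. This is exactly a \emph{trivial} turn in the terminology of the Definition above (a turn of the form $[x,x]$), since the two germs $\bar e_i$ and $g_{i+1}\bar e_i = \bar e_i$ coincide. (For a cyclically reduced loop one argues the same way at the wrap-around index, using the cyclic-reduction convention stated in the Preliminaries.)

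Next I would feed this trivial turn through $f$. Because $f$ is a straight map, it sends the turn $\tau$ crossed by $\gamma$ at $v$ to the turn $[f(\bar e_i), f(g_{i+1}\bar e_i)]$, meaning the initial germs (combinatorial length $1$ subpaths) of $f(\bar e_i)$ and $f(g_{i+1}\bar e_i)$. But $f(g_{i+1}\bar e_i) = f(1\cdot \bar e_i) = f(\bar e_i)$ as paths, so these two initial germs are literally identical; hence $f(\tau)$ is a trivial turn. By definition, a turn that $f$ maps to a trivial turn is not $f$-legal, i.e. $\tau$ is $f$-illegal. Since a legal path is by definition one all of whose turns are legal (equivalently, $f$ of the path is reduced and of the predicted combinatorial length, folding nothing), the presence of an illegal turn crossed by $\gamma$ contradicts the $f$-legality of $\gamma$.

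The only point requiring a little care — and the one I would spell out — is the bookkeeping that translates "non-reduced" into "crosses a trivial turn at some interior vertex," handling uniformly the case of a genuine loop based at a vertex, the case of a path with endpoints in a common $G$-orbit, and (if one wants the cyclically reduced representative, as the counting lemma will later need) the wrap-around turn. In all these cases the obstruction to reducedness is a pair $e_i, \bar e_i$ separated by a trivial group element, which is precisely a trivial turn; and trivial turns are declared illegal. This is genuinely "almost tautological," as the authors note: once the definitions of trivial turn, of $f$ mapping a turn, and of $f$-legal are unwound, there is no real content beyond the observation that $f$ applied to two equal germs yields two equal germs. I do not anticipate a substantive obstacle; the risk is only notational sloppiness in the edge-path indexing, so I would state the non-reduced backtrack explicitly as $e_{i+1} = \bar e_i$, $g_{i+1} = 1$ before pushing forward by $f$.
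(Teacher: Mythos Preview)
Your proposal is correct and follows essentially the same approach as the paper: argue by contraposition that a non-reduced path contains a trivial turn $[x,x]$, which is automatically $f$-illegal. The paper's proof is just a terser version of yours (two sentences), and note that the lemma concerns arbitrary paths, not only loops, so the discussion of cyclic reduction and wrap-around is unnecessary.
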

\begin{proof}
  If $\gamma$ is not reduced, then it contains a sequence $\bar ee$, hence  a turn of the kind
  $[x,x]$. That turn cannot be $f$-legal. 
\end{proof}

\begin{defn}
  Let $X\in\O(\G)$, and $\tau$ be a turn of $X$. We say that $\tau$ is {\bf (non-) free} if it is
  based at a (non-) free vertex. We say that $\tau$ is {\bf infinite non-free} if it is based at a
  vertex with infinite vertex group (resp., infinite stabiliser).  We say that $\tau$ is {\bf finite non-free} if it is non-free
  and based at a vertex with finite vertex group (resp., finite stabiliser).
  Given an invariant sub-graph $Y\subseteq X$ (resp., $G$- sub-forest), we say that
    $\tau$ is in 
    $Y$ if both germs of edges of $\tau$ belong to $Y$.
\end{defn}

\section{Unfolding projections and local surgeries on paths}\label{s3}

Suppose $\Delta $ is a simplex in $\O(\G)$, with underlying graph of groups $X$.  
Let $\tau$ be a non-degenerate turn in $X$. We denote by $\Delta_\tau$ the simplex obtained by
(equivariantly) folding $\tau$. If $\tau$ is free and trivalent then $\Delta_\tau$ trivially
equals $\Delta$. Otherwise,  $\Delta$ is a codimension-one face of
$\Delta_\tau$. In the latter case there is a natural 
projection $\Delta_\tau\to\Delta$ corresponding to the collapse of the newly created
edge. Rather, we will use the {\bf unfolding projection}, which is defined as follows.

Given $Y\in\Delta_\tau$, we will define lengths of edges of $X$ so that
isometrically folding $\tau$ eventually produces $Y$.
Let $e_1,e_2$ be the edges defining $\tau$ (possibly $e_1=e_2$ is $\tau$ arises at an
edge-loop) and let $e$ be the extra edge added in $\Delta_\tau$ after folding $\tau$. 

Firstly, every edge of $X$, different to $e_1,e_2$, will have the same length as its length in
$Y$. Then, for $i=1,2$ we set the length of $e_i$ to be $L_{Y}(e_i)+L_{Y}(e)$ if $e_1\neq e_2$,
and $L_{Y}(e_i)+2L_{Y}(e)$ if $e_1=e_2$.
We denote the resulting metric graph, which is an element of $\Delta$, by $\unf_\tau(Y)$
and we say that it is obtained by {\bf unfolding $\tau$}. The map $$\unf_\tau:\Delta_\tau\to\Delta$$ 
is our unfolding projection.

\begin{lem}\label{unf}
  The map $\unf_\tau$ is surjective. Moreover,
  Folding $\tau$ by an amount
of $L_{Y}(e)$ produces a simplicial segment from $\unf_\tau(Y)$ to $Y$. 
\end{lem}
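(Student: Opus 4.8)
The plan is to verify the two assertions directly from the definition of $\unf_\tau$, exhibiting the promised folding segment explicitly. Fix $Y \in \Delta_\tau$ and let $e$ be the edge created by folding $\tau$, with $e_1, e_2$ the edges of $X$ defining $\tau$. Set $X' = \unf_\tau(Y)$, so that by definition $X'$ agrees with $Y$ on all edges other than $e_1, e_2$, and carries length $L_Y(e_i) + L_Y(e)$ on $e_i$ when $e_1 \neq e_2$ (resp.\ $L_Y(e_i) + 2L_Y(e)$ when $e_1 = e_2$).

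First I would describe the folding path out of $X'$. Starting from $X'$, fold the turn $\tau$ by an amount $t$ for $t \in [0, L_Y(e)]$: this identifies initial segments of length $t$ of $e_1$ and $e_2$, creating an edge of length $t$ and leaving stubs of $e_i$ of length $L_Y(e_i) + L_Y(e) - t$ (or $L_Y(e_i) + 2L_Y(e) - 2t$ in the edge-loop case, since both ends get folded). For small $t$ this is a legitimate element of $\O(\G)$ by the lemma preceding Section~\ref{s3}; and since $\tau$ is non-degenerate the fold stays in $\O(\G)$ for the whole interval because no edge length drops to zero before $t = L_Y(e)$ — here one should check that $L_Y(e_i) + L_Y(e) - t > 0$ on $[0, L_Y(e)]$, which holds because $L_Y(e_i) > 0$. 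At $t = L_Y(e)$ the identified segment has length exactly $L_Y(e)$ and the residual stubs have length exactly $L_Y(e_i)$, so the resulting metric graph of groups coincides edge-for-edge with $Y$. Thus $t \mapsto$ (fold of $X'$ by $t$) is a simplicial segment in $\Delta_\tau \cup \Delta$ from $X' = \unf_\tau(Y)$ to $Y$, which is the "moreover" clause.

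Surjectivity of $\unf_\tau$ is then almost immediate: given any $Y \in \Delta_\tau$, the point $X' = \unf_\tau(Y)$ constructed above lies in $\Delta$ (all its edge lengths are strictly positive, the underlying graph is $X$, and the marking is inherited), and by the previous paragraph folding $\tau$ on $X'$ recovers $Y$, so in particular $\unf_\tau(X') = Y$ by the very recipe defining $\unf_\tau$ (the edge lengths of $X'$ are exactly the ones the definition prescribes for the preimage of $Y$). Hence every $Y$ is hit.

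I expect the only real subtlety — not a deep one — to be bookkeeping the edge-loop case $e_1 = e_2$ correctly: there the fold by $t$ identifies two initial segments of the \emph{same} edge, so the length consumed from that edge is $2t$ rather than $t$, which is exactly why the definition of $\unf_\tau$ puts $L_Y(e_i) + 2L_Y(e)$ there; one should carry this factor of $2$ through the interval argument and the positivity check. Everything else is a routine unwinding of the definitions, so the proof can be kept short, essentially: "unfold, then fold back, and observe the lengths match."
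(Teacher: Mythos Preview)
Your treatment of the ``moreover'' clause is correct and more detailed than the paper's one-line ``immediately follows from the construction.'' The folding-path description and the positivity checks (including the edge-loop bookkeeping) are exactly what one would write out.

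However, your surjectivity paragraph is confused about domain and codomain. The map is $\unf_\tau:\Delta_\tau\to\Delta$, so surjectivity means: for every $Z\in\Delta$ there exists $Y\in\Delta_\tau$ with $\unf_\tau(Y)=Z$. You instead start from $Y\in\Delta_\tau$, set $X'=\unf_\tau(Y)\in\Delta$, show that folding $X'$ recovers $Y$, and then write ``$\unf_\tau(X')=Y$'' and ``every $Y$ is hit.'' The expression $\unf_\tau(X')$ is ill-typed ($X'$ is not in the domain $\Delta_\tau$), and ``every $Y$ is hit'' speaks of the domain, not the codomain. What you have actually shown is that $\unf_\tau$ has a left inverse (hence is injective), which is not the claim.

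The fix is short: given $Z\in\Delta$, fold $\tau$ in $Z$ by any $t$ with $0<t<\min_i L_Z(e_i)$ (or $0<2t<L_Z(e_1)$ in the loop case) to obtain $Y_t\in\Delta_\tau$. In $Y_t$ the new edge $e$ has length $t$ and the stubs $e_i$ have length $L_Z(e_i)-t$ (resp.\ $L_Z(e_1)-2t$), so by the defining recipe $\unf_\tau(Y_t)$ assigns to $e_i$ the length $(L_Z(e_i)-t)+t=L_Z(e_i)$ and agrees with $Z$ on all other edges. Hence $\unf_\tau(Y_t)=Z$, and $\unf_\tau$ is surjective.
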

\begin{proof}
  The proof immediately follows from the construction.
\end{proof}

\medskip

We describe now local surgeries on paths. As above, let $\Delta$ be a simplex of $\O(\G)$
with underlying graph $X$. Since we  adopt  the graphs-viewpoint, then we may view $G$ as the fundamental group of the graph of groups given by $X$.

%For the convenience of the reader, we recall some basic definitions for graph of groups. A \textit{path} $P$ in the graph of group is a sequence of the form $ (g_1,e_1,g_2,e_2,\dots,g_k,e_k,g_{k+1})$ where the $e_i$ are oriented edges of $X$ such that the endpoint of each $e_i$ is the
%initial point of $e_{i+1}$, and each $g_i$ is a group element from the vertex group based at the initial point of $e_i$; in this case, we simply write $P = g_1e_1\dots g_k e_k g_{k+1}$. A path is called \textit{loop}, if the endpoint of its last edge coincides with the initial point of its first edge. We say that a path $P$, as above, is \textit{reduced}, if whenever $e_i = \bar{e}_{i+1}$, $i=1,\dots,k-1$, then $g_i$ is not the trivial group element. Furthermore, we say that a loop $P = g_1 e_1 g_2 e_2 \dots g_k e_k g_{k+1}$ is \textit{cyclically reduced} if it is reduced and if $e_k = \bar{e_1}$ then $g_{k+1} g_1 \neq 1$. 

%We can always represent the conjugacy class of a loop in the form $g_1 e_1 g_2 e_2 \dots g_ke_k$; in this case, being cyclically reduced means that whenever $e_i = \bar{e}_{i+1}$, then $g_i$ is not the trivial group element, with the subscripts taken modulo $k$. 

  If $ g_1 e_1 g_2 e_2 \dots g_k e_k$ is a loop in the graph of groups, then it crosses $k$ turns (including multiplicity); each sub-path of the form $e_i g_{i+1} e_{i+1}$ determines a turn,  $[\bar{e}_i,g_{i+1}e_{i+1}]$, where the indices are taken modulo $k$. (The specific metric on $X$ is not relevant for this discussion, merely the fact that we have a way of representing elements/conjugacy classes as loops in the underlying graph of groups for $X$.)

 Thus a path (or loop) is reduced (cyclically reduced) if the turns it crosses (cyclically crosses) are all non-trivial.

With this description, we may modify any given path by replacing one of the $g_i$ with some
other element $g$ in the same vertex group. The turns crossed by this new path are exactly the
same as the original, except for one turn $\tau=[\bar{e}_{i-1}, g_ie_i]$ which is replaced with
$[\bar{e}_{i-1}, ge_i]$. We denote the modified turn and loop respectively $$\tau_g\qquad\text{and}\qquad \gamma_{\tau,g}.$$

Putting everything in formulas we have: 
\begin{lem}[Turn-surgery of paths]\label{Surgery}
Let $\Delta$ be a simplex of $\O(\G)$, $\gamma=g_1e_1\dots g_ke_k$ a cyclically reduced loop
realised in 
the underlying graph of groups and $\tau=[\bar{e}_{i-1},g_ie_i]$ be a turn crossed by
$\gamma$. Let 
$v$ be the initial vertex of $e_i$. Then, if $\tau$ is non-degenerate, for any $g\neq g_i\in
G_v$ the loop $\gamma_{\tau,g}$  is cyclically reduced and satisfies
$$\#(\gamma_{\tau,g},\tau')=
\left\{\begin{array}{ll}
         \#(\gamma,\tau') & \text{if }\tau'\neq\tau,\tau_g\\
         \#(\gamma,\tau')-1 & \text{if }\tau'=\tau\\
         \#(\gamma,\tau')+1 & \text{if }\tau'=\tau_g
       \end{array}\right.$$
   \end{lem}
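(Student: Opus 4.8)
The plan is to reduce everything to combinatorial bookkeeping of turns along the two loops. Write $\gamma = g_1 e_1 \cdots g_k e_k$; then the turns cyclically crossed by $\gamma$, counted with multiplicity, are precisely $\tau^{(j)} := [\bar e_{j-1}, g_j e_j]$ for $j = 1, \dots, k$ (indices mod $k$), so that $\#(\gamma, \tau')$ equals the number of indices $j$ with $\tau^{(j)} = \tau'$, and the same description applies to any cyclically reduced loop. I would fix an index $i$ with $\tau = \tau^{(i)}$; by definition $\gamma_{\tau,g}$ is obtained by replacing this $g_i$ by $g$, keeping all edges. Since the group element $g_j$ occurs in exactly one of the turns $\tau^{(1)}, \dots, \tau^{(k)}$ — namely $\tau^{(j)}$ — the list of turns of $\gamma_{\tau,g}$ is $\tau^{(1)}, \dots, \tau^{(i-1)}, \tau_g, \tau^{(i+1)}, \dots, \tau^{(k)}$: exactly the list for $\gamma$, with the single entry $\tau = \tau^{(i)}$ replaced by $\tau_g = [\bar e_{i-1}, g e_i]$.

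Next I would establish that $\gamma_{\tau,g}$ is cyclically reduced, so that the multiset above is the one appearing in the definition of $\#$ (and no hidden cancellation intervenes). A loop is cyclically reduced precisely when every turn it crosses is non-trivial. The turns $\tau^{(j)}$, $j\neq i$, are crossed by the cyclically reduced loop $\gamma$, hence are non-trivial. For $\tau_g$: whether $[\bar e_{i-1}, h e_i]$ is degenerate depends only on the unordered pair of germs $\bar e_{i-1}, e_i$ and not on the choice of $h \in G_v$; thus $\tau_g$ is non-degenerate exactly because $\tau$ is, and a non-degenerate turn is in particular non-trivial. Hence $\gamma_{\tau,g}$ is cyclically reduced.

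Then I would check that $\tau \neq \tau_g$, which is what makes the three cases of the displayed formula mutually exclusive. A turn at $v$ is a $G_v$-orbit of unordered pairs of germs at $v$; if $\tau = \tau_g$ there is $h \in G_v$ carrying $\{\bar e_{i-1}, g_i e_i\}$ onto $\{\bar e_{i-1}, g e_i\}$. If $h$ fixes the germ $\bar e_{i-1}$ then $h = 1$ (edge stabilizers are trivial), whence $g_i = g$, contradicting the hypothesis; if instead $h$ sends $\bar e_{i-1}$ to $g e_i$, then $\bar e_{i-1}$ and $e_i$ lie in the same $G_v$-orbit, contradicting the non-degeneracy of $\tau$. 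So $\tau \neq \tau_g$. With all of this in hand, the formula is immediate: passing from the turn-multiset of $\gamma$ to that of $\gamma_{\tau,g}$ deletes exactly one copy of $\tau$ and adds exactly one copy of $\tau_g$, which yields the three stated equalities according to whether $\tau' \notin \{\tau, \tau_g\}$, $\tau' = \tau$, or $\tau' = \tau_g$.

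The only steps carrying real content, rather than bookkeeping, are the two uses of non-degeneracy together with triviality of edge stabilizers: deducing that $\tau_g$ is non-trivial (so $\gamma_{\tau,g}$ really is cyclically reduced) and that $\tau \neq \tau_g$. I expect these to be the main, though small, obstacles; once they are settled, cyclic reducedness and the counting formula follow directly from the definitions.
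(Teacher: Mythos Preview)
Your proposal is correct and follows essentially the same approach as the paper's proof, which simply asserts that cyclic reducedness follows from non-degeneracy of $\tau$ and then appeals to counting. You have fleshed out the two steps the paper leaves implicit --- that $\tau_g$ is non-trivial and that $\tau \neq \tau_g$ --- and both arguments (via trivial edge stabilisers and non-degeneracy) are sound.
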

Moreover, if $\tau$ is degenerate (hence $e_{i-1}=\bar e_i$), than the same is true if in
addition we choose $g\neq id$.
   
   \begin{proof}
     Since $\gamma$ is cyclically reduced and $\tau$ is not degenerate, then $\gamma_{\tau,g}$ is
     reduced. The same holds true if $\tau$ is degenerate and $g\neq id$. The claim now easily
     follows by counting the number of times that a turn appears along $\gamma_{\tau,g}$.  
   \end{proof}
We introduce also a second surgery on paths. Let 
$\gamma=g_1e_1\dots g_ke_k$ denote a loop as above. Let $e=e_i$ be an oriented edge crossed by $\gamma$ at least twice and let $j$ be the
next index so that $e_j=e$. We can therefore form the loop  $g_je_i\dots g_{j-1}e_{j-1}$ (note that the formed loop starts with the group element $g_j$ instead of $g_i$, as in this case any turn which is crossed by the this new loop, was seen as a turn crossed by $\gamma$). 
We refer to such procedure as {\em edge-surgery}, and denote the resulting loop by $$\gamma_e.$$

Note that every turn (cyclically) crossed by $\gamma_e$ is also a turn crossed by $\gamma$, so if $\gamma$ is cyclically reduced, then  
$\gamma_e$ is cyclically reduced as well.
By construction, $\gamma_e$ crosses the oriented edge $e$ only once. Still, it may cross $\bar
e$ and other edges multiple times.   
   
   \begin{lem}[Edge-reduction of loops]\label{Surgery3}
Let $\Delta$ be a simplex of $\O(\G)$, $\gamma=g_1e_1\dots g_ke_k$ a cyclically reduced loop realised in
the underlying graph of groups. Then for every
$e_i$ there is a cyclically reduced loop $\gamma'$, obtained by recursive edge-surgeries on $\gamma$, such that
first, $\gamma'$ crosses $e_i$, and second,
$\gamma'$ crosses every oriented edge at most once. (Possibly $\gamma'=\gamma$ if $\gamma$ had those properties).
   \end{lem}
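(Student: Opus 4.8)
The plan is to argue by induction on the combinatorial length $k$ of $\gamma$. If $\gamma$ already crosses every oriented edge at most once there is nothing to do, so assume some oriented edge is crossed at least twice. I will produce, by a single edge-surgery, a cyclically reduced loop $\gamma''$ of strictly smaller combinatorial length that still crosses $e_i$; applying the inductive hypothesis to $\gamma''$ with the same distinguished edge then yields the desired $\gamma'$, which is obtained from $\gamma$ by the (finite) composition of all the edge-surgeries performed.

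If the distinguished edge $e_i$ is itself crossed at least twice, take $\gamma'' = \gamma_{e_i}$: by construction it crosses the oriented edge $e_i$ exactly once, it is strictly shorter than $\gamma$, and it is cyclically reduced because every turn crossed by $\gamma_{e_i}$ is already crossed by $\gamma$ (as observed just before the statement). If instead $e_i$ is crossed exactly once, choose any oriented edge $e' \neq e_i$ that is crossed at least twice. The point to exploit is that edge-surgery on $e'$ involves a choice: if one starts the surgery at a prescribed occurrence of $e'$, the resulting loop retains precisely the segment of $\gamma$ from that occurrence up to the next occurrence of $e'$. Since $e'$ occurs at least twice, its occurrences split the cyclic word $\gamma$ into at least two arcs, and the single occurrence of $e_i$ lies in exactly one of them; starting the surgery at the occurrence of $e'$ immediately preceding that arc, the retained segment contains $e_i$. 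The resulting loop $\gamma''$ then crosses $e_i$, crosses $e'$ exactly once, is cyclically reduced by the same turn argument, and is strictly shorter than $\gamma$ since at least the remaining occurrence of $e'$ together with its arc is deleted.

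Because the combinatorial length strictly decreases at each step and each step is an edge-surgery, the recursion terminates; at termination no oriented edge is crossed more than once and $e_i$ is still crossed, which is exactly the assertion. The step I expect to be the only real subtlety is the second case: one must make sure that shrinking the multiplicity of an auxiliary edge $e'$ does not destroy the unique crossing of $e_i$, and this is handled precisely by the freedom in choosing at which occurrence of $e'$ the surgery begins. The bookkeeping that cyclic reducedness is preserved throughout is routine, being immediate from the inclusion of the turns of an edge-surgered loop into the turns of the original one.
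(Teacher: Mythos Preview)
Your proof is correct and follows essentially the same idea as the paper's: perform an edge-surgery at an occurrence of a repeated edge chosen so that the retained arc still contains $e_i$, and iterate. The only cosmetic differences are that the paper inducts on the total number of edge-repetitions $n(\gamma)$ rather than on the combinatorial length, and it handles your two cases uniformly by simply asserting the existence of a cyclic sub-path $g_je_j\dots g_ie_i\dots g_le_l$ with $e_j=e_l$ (and $l$ the next occurrence of $e_j$), then taking $\gamma_{e_j}$; your case split makes the choice of occurrence more explicit but is logically the same step.
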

   \begin{proof}
     For a loop $\eta$ set $n(\eta)$ the total number of repetitions (counted with
     multiplicity) of oriented edges. So $\eta$ crosses any oriented edge at most once if and
     only if $n(\eta)=0$.   If $n(\gamma)>0$, then  there is $g_je_j\dots g_ie_i\dots g_le_l$
     a sub-path  of $\gamma$ containing $e_i$ and so that $e_j=e_l$ 
     (indices are taken cyclically). The loop $\gamma_{e_j}$ contains $e_i$ and
     $n(\gamma_{e_j})\leq n(\gamma)-1$. We conclude by arguing inductively as $n(\gamma)$ is strictly
     decreasing under edge-surgeries. 
   \end{proof}

There is a version of the previous lemma for turns:
\begin{lem}\label{Surgery4}
Let $\Delta$ be a simplex of $\O(\G)$, $\gamma=g_1e_1\dots g_ke_k$ a cyclically reduced loop realised in
the underlying graph of groups.
If $\tau = [e,g e']$ is a non-trivial turn, which is crossed by $\gamma$, then we can find some cyclically reduced loop $\gamma'$, obtained by recursive edge surgeries on $\gamma$, such that first, $\gamma'$ crosses $\tau$ and second, $\gamma'$ crosses every oriented edge at most once.
\end{lem}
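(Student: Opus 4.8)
The plan is to mimic the proof of Lemma~\ref{Surgery3}, but carrying the turn $\tau$ along in place of the single oriented edge $e_i$. Since $\tau = [e, ge']$ is non-trivial and crossed by $\gamma$, the cyclically reduced representative of $\gamma$ contains, cyclically, a sub-path of the form $e \, g \, e'$ (reading in one of the two orientations of $\tau$); this sub-path is what ``witnesses'' the crossing of $\tau$. I would introduce the same counting function $n(\eta)$ as in Lemma~\ref{Surgery3}, namely the total number of repetitions (with multiplicity) of oriented edges in $\eta$, so that $\eta$ crosses every oriented edge at most once precisely when $n(\eta)=0$.

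Now I would argue by induction on $n(\gamma)$. If $n(\gamma)=0$ we take $\gamma'=\gamma$ and are done. If $n(\gamma)>0$, there is some oriented edge, call it $d$, crossed at least twice by $\gamma$. The key observation is that at least one such repeated oriented edge $d$ can be chosen so that an edge-surgery eliminating one of its occurrences does \emph{not} destroy the sub-path $e\,g\,e'$ witnessing $\tau$: concretely, if $d$ occurs at positions $j$ and $l$ (cyclically), the edge-surgery $\gamma_d$ (as in Lemma~\ref{Surgery3}, built from the sub-loop $g_j d \dots g_{l-1}e_{l-1}$) removes exactly the portion of $\gamma$ strictly between the two chosen occurrences of $d$; choosing the occurrences of $d$ to be ``consecutive'' around the cycle so that the excised arc does not contain the sub-path $e\,g\,e'$ keeps the turn $\tau$ crossed. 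Since every turn cyclically crossed by an edge-surgery of a cyclically reduced loop is again cyclically crossed (as noted after the definition of edge-surgery, and used in Lemma~\ref{Surgery3}), $\gamma_d$ is cyclically reduced, still crosses $\tau$, and satisfies $n(\gamma_d)\le n(\gamma)-1$. Applying the inductive hypothesis to $\gamma_d$ yields the desired $\gamma'$, obtained from $\gamma$ by recursive edge-surgeries.

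The main obstacle is precisely the bookkeeping in the induction step: one must verify that a repeated oriented edge can always be chosen so that the edge-surgery preserves the crossing of $\tau$. The cleanest way to handle this is to fix, once and for all, a specific occurrence of the witnessing sub-path $e\,g\,e'$ in the cyclic word, and when looking for a repeated oriented edge $d$ to surger, scan the cyclic word \emph{starting just after} that sub-path; the first oriented edge encountered that has already appeared gives a pair of occurrences bounding an arc that avoids $e\,g\,e'$, so the surgery on that pair leaves the witnessing sub-path intact. Alternatively, and perhaps more in keeping with the style of the surrounding lemmas, one can deduce the statement directly from Lemma~\ref{Surgery3}: replace each occurrence of $\tau$ by the oriented edge $e$ in the following sense. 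The sub-path $e\,g\,e'$ contains the oriented edge $e$; apply Lemma~\ref{Surgery3} to $\gamma$ and this particular edge $e_i := e$ to get a cyclically reduced $\gamma''$ crossing $e$ and crossing every oriented edge at most once. But then one still has to know $\gamma''$ crosses $\tau$ and not merely $e$, which brings one back to tracking the turn. So I expect the turn-tracking argument above, with the explicit ``scan after the witnessing sub-path'' choice, to be the honest route; everything else is a routine copy of the proof of Lemma~\ref{Surgery3}.
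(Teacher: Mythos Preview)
Your approach---induct on $n(\gamma)$, at each step performing an edge-surgery that preserves a fixed witness to $\tau$---is exactly the paper's. The paper organises the induction in two phases rather than via your scanning device: it first performs edge-surgeries on the witness edges themselves to reduce to the case $\gamma = g e' \dots \bar{e}$ in which $e'$ and $\bar{e}$ each occur exactly once, and then observes that any remaining repeated edge $E$ has its first and last occurrences strictly between positions $2$ and $k-1$, so excising the arc between them leaves both the initial $e'$ and the terminal $\bar{e}$ (hence the cyclic crossing of $\tau$) intact.

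One point in your version needs tightening. Your scan ``just after the witness'' works cleanly when the first repetition you meet is \emph{not} one of the two witness edges; but if the only repeated oriented edge is $e'$ (or $\bar{e}$) itself, then one occurrence lies inside the witnessing sub-path, and the phrase ``bounding an arc that avoids'' the witness is no longer literally accurate. The repair is precisely the paper's first phase: if $e'$ occurs at positions $1$ and $m$, take the surgery keeping the arc from position $m$ to $k$; its closing turn is $[\bar{e}_k, g_1 e_m] = [e, g e'] = \tau$, so the crossing survives. Once you note this, your induction goes through. (Minor slip: the sub-path witnessing the turn $\tau = [e, g e']$ is $\bar{e}\,g\,e'$, not $e\,g\,e'$, since the turn determined by $e_{i} g_{i+1} e_{i+1}$ is $[\bar{e}_i, g_{i+1} e_{i+1}]$.)
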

\begin{proof}
Let $\gamma=g_1e_1\dots g_ke_k$ be a cyclically reduced loop, as above.  Without loss of generality, we can assume that $\gamma$ is of the form $\gamma = ge' \dots \bar{e}$ and there are no other occurrences of $e'$ or $\bar{e}$ in $\gamma$, as otherwise we can preform edge-surgeries to change $\gamma$ to a cyclically reduced loop satisfying this property and which still crosses $\tau$ (cyclically). 

%This is possible, as we can consider the last occurrence $e_i$ of $e'$, the first occurrence $e_j$, with $i<j$, of $\bar{e}$ (where indices are taken cyclically)  and the loop $ge_i g_{i+1}e_{i+1} \dots g_{j} e_j $.

Now suppose that there is some oriented edge $E$ which is crossed by $\gamma$ at least twice. In this case, if $e_i$ and $e_j$ are the first and the last occurrences of $E$ in $\gamma$, respectively, then we replace $\gamma$ with the cyclically reduced loop $\gamma_1 = ge' \dots g_{i-1}e_{i-1} g_j e_j \dots g_k e_k$ which still crosses $\tau$ and, in addition, crosses $E$ once. By arguing inductively on the number of repetitions, we can find a $\gamma'$ with the requested properties.
\end{proof}

\section{Critical and Regular turns}
Firstly we explain our strategy. 
Given $X\in\O(\G)$ which is
minimally displaced by an automorphism $\phi$, we want to control the number of ways we can fold
a turn of $X$, without exiting $\Min(\phi)$. If a straight map $f:X\to X$ representing $\phi$
sends an edge of a maximally stretched loop $\gamma$ across a turn $\tau$, then by folding $\tau$ we
decrease the length of $f(\gamma)$. ``Morally'', this is the only way we can decrease
stretching factors of loops, and if we fold a loop {\em not} in the image of an edge, we increase the displacement.
``Morally'' does not mean ``literally'', and in fact one has to (focus on legal loops in
tension graph, and) analyse what happens to the images of turns. Our plan is to select a finite
number of turns that will be enough to control the displacement.
These will be our ``simplex critical turns'' that we introduce at the end of this section. The upshot of this process will be that the folding of simplex regular (i.e. non-critical) turns, strictly increases
the displacement. We note that our set of critical turns won't be optimal, in the sense that
we may a priori increase the displacement also by folding a critical turn; for instance we include all free turns for convenience.

It would be
interesting to have a nice characterisation of exactly those turns whose folding do not
increase the displacement. The next lemma is the key observation we begin with. 

\begin{lem}\label{L2}
  Let $[\phi]\in\Out(\G)$. Let $\Delta$ be a simplex of $\O(\G)$ and $f$ be an optimal
  map representing $\phi$ on a point $X$ of $\Delta$. Let $\tau$ be a non-degenerate
  turn, and let $\Delta_\tau$ be the simplex obtained by
  folding $\tau$. Let $X^t$ denote the point of $\Delta_\tau$ obtained from $X$ by folding
  $\tau$ by an amount $t$.

  If there is an $f$-legal loop $\gamma$ in the tension graph of $f$ (see Definition~\ref{deftg}) such that 
\begin{equation}
  \label{in1}
      \#(f(\gamma),\tau)\leq\lambda_\phi(X)\#(\gamma,\tau)\qquad\text{(resp. with strict inequality)}\tag{$\heartsuit$}
\end{equation}
then $$\lambda_\phi(X^t)\geq\lambda_\phi(X) \qquad\text{(resp. with strict inequality)}.$$
\end{lem}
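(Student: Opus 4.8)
The plan is to exhibit, for each $t$ in the folding range, a loop in $X^t$ whose stretching factor under an optimal map $X^t \to \phi X^t$ is at least $\lambda_\phi(X)$ (resp. strictly larger). The natural candidate is the image of $\gamma$ itself, viewed in $X^t$. Since folding by an amount $t$ only shortens the two edges $e_1,e_2$ defining $\tau$, and since $\gamma$ is $f$-legal — hence reduced in $X$ by Lemma~\ref{LCR} — we need to track how its length changes as we fold, and how the length of $f(\gamma)$ changes. Writing $L_{X^t}$ for length in $X^t$, the key computation is that
$$
L_{X^t}(\gamma) = L_X(\gamma) - t\cdot\#(\gamma,\tau)\cdot c,
$$
where $c$ is $1$ or $2$ depending on whether $e_1 \neq e_2$; similarly $f(\gamma)$, which crosses $\tau$ exactly $\#(f(\gamma),\tau)$ times, satisfies $L_{X^t}(f(\gamma)) = L_X(f(\gamma)) - t\cdot\#(f(\gamma),\tau)\cdot c$ after the corresponding fold on the target side. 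The point is that folding $\tau$ in $X$ produces a folding path which is simultaneously a folding on source and target (the map $f$ conjugates to a map on the folded graphs), so the stretch of $\gamma$ can only be estimated through these two quantities.

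First I would set up carefully the folded map $f^t : X^t \to X^t$ (or rather $X^t \to \phi X^t$) obtained from $f$ — this is standard in the folding-path literature cited (\cite{FM13,FM18I}), and I would invoke it rather than reconstruct it. Then, since $\gamma$ is $f$-legal and lies in the tension graph, on $X$ we have $L_X(f(\gamma)) = \lambda_\phi(X) L_X(\gamma)$ exactly (legal loops in the tension graph are maximally stretched). Combining with the two length formulas above and hypothesis \eqref{in1}:
$$
L_{X^t}(f(\gamma)) = L_X(f(\gamma)) - tc\,\#(f(\gamma),\tau) \geq \lambda_\phi(X)L_X(\gamma) - tc\,\lambda_\phi(X)\#(\gamma,\tau) = \lambda_\phi(X) L_{X^t}(\gamma),
$$
so the stretch of $\gamma$ from $X^t$ to its image is at least $\lambda_\phi(X)$, whence $\lambda_\phi(X^t) \geq \Lambda(X^t,\phi X^t) \geq \lambda_\phi(X)$. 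The strict version follows verbatim from the strict inequality in \eqref{in1}, since then the last displayed inequality is strict and $L_{X^t}(\gamma) > 0$.

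The main obstacle I anticipate is making precise that the loop $\gamma$ remains legal (or at least reduced with the claimed length) in $X^t$ for all $t$ in the relevant range, and that $f(\gamma)$ really does get shortened by exactly $c\,t\,\#(f(\gamma),\tau)$ — i.e., that no cancellation beyond the folded segments occurs in $f(\gamma)$ when we pass to $X^t$, and that the folded map's behaviour on $\gamma$ is controlled. One must also be careful that $\gamma$ still has positive length in $X^t$ (true for $t$ small enough, and for the strict statement this is exactly what one needs). Handling the edge-loop case $e_1 = e_2$ uniformly via the factor $c$ is a minor bookkeeping point. I would treat the whole argument by comparing lengths on the folding segment from $\unf_\tau$-preimages, using Lemma~\ref{unf} to identify $X^t$ with a point reached by isometric folding, so that the length accounting is transparent.
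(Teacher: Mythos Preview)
Your approach is correct and is essentially the paper's proof: use $\gamma$ as a witness loop, compute $L_{X^t}(\gamma)$ and $L_{X^t}(f(\gamma))$ after folding, and compare. The paper avoids your main anticipated obstacle---constructing and controlling a folded map $f^t$---by invoking directly the characterisation $\lambda_\phi(X^t)=\sup_g L_{X^t}(\phi g)/L_{X^t}(g)$, so no map on $X^t$ is ever needed; one only needs the reduced lengths in $X^t$ of the fixed group elements $\gamma$ and $\phi(\gamma)=f(\gamma)$, which are given by your displayed formulas (the paper simply takes $c=2$ throughout).
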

\begin{proof} For any legal loop $\gamma$ in the tension graph of $X$, in  $X^t$ we have   $$\lambda_\phi(X^t)=\sup_g\frac{L_{X^t}(f(g))}{L_{X^t}(g)}\geq\frac{L_{X^t}(f(\gamma))}{L_{X^t}(\gamma)}=\frac{\lambda_\phi(X)L_{X}(\gamma)-2t\#(f(\gamma),\tau)}{L_{X}(\gamma)-2t\#(\gamma,\tau)}.$$

  and $\lambda_\phi(X^t)\geq\lambda_\phi(X)$ is guaranteed (with strict inequality) provided that
$$\lambda_\phi(X)L_{X}(\gamma)-2t\#(f(\gamma),\tau)\geq
\lambda_\phi(X)(L_{X}(\gamma)-2t\#(\gamma,\tau))$$
(resp. with strict inequality), and that last inequality clearly reduces to (\ref{in1}).
\end{proof}

What we will do from now on is showing that, except for finitely many turns,  we can guarantee the
existence of a loop $\gamma$ satisfying the hypothesis of Lemma~\ref{L2}.

\bigskip

We now make a choice of a single non-trivial element $h_v\in G_v$, for each non-trivial
$G_v$. Some of our subsequent constructions will be dependent on this choice, but we will never
need to revise this choice so we will not need to refer to the specific elements.
We denote the collection of such chosen elements by $H$: $$H=\{h_v: v\text{ is a non-free vertex}\}.$$

\begin{defn}\label{ADelta}
        For any simplex $\Delta$ of $\O(\G)$ define a set of loops, $A_{\Delta}$ as
        follows: a cyclically reduced loop $g_1e_1\dots g_ke_k$ in the underlying
        graph of $\Delta$ is in 
        $A_\Delta$ if and only if
        \begin{enumerate}
        \item it crosses every (un-oriented) edge at most $4$ times, and
        \item every non-trivial $g_i$  belongs to $H$.
        \end{enumerate}
\end{defn}

\begin{rem}
	The reason for constructing $A_{\Delta}$ is that it is finite, and gives us a local coordinate system of loops which will be sufficient for calculating displacements and the Lipschitz metric, locally. 
\end{rem}

\begin{defn}\label{defncc}
	Let $[\phi]\in\Out(\G)$. For a simplex, $\Delta$, in the underlying graph of
        $\Delta$ we say that a turn $\tau$ is {\bf 
          candidate regular} if it is infinite non-free and $\#(\phi(\gamma),\tau)=0$ for all loops
        in $A_\Delta$;
        a turn is {\bf candidate critical} if it is not regular (so $\tau$ is critical if
        either its vertex group is finite or if it appears in $\phi(A_\Delta)$). We denote the set of
   candidate critical turns of $\Delta$ by $\mathcal{C}_C(\Delta)$. (We remark that even if we
   do not refer to  $\phi$ in the notation, the set $\mathcal C_C(\Delta)$ depends on $\phi$).  
\end{defn}

\begin{lem}\label{LB1}
  Let $X, Y \in \O(\G)$ and $f:X \to Y$ a straight map. Let $\Delta=\Delta_X$.
  Suppose $\xi$ is either an edge or a free turn of
  $X$, which is crossed by an $f$-legal loop $\gamma_0$. Then $\xi$ is also crossed by a $f$-legal
  loop $\gamma\in 
  A_\Delta$ which additionally crosses any oriented edge at most once. If $\gamma_0$ is
  in the tension graph, then so is $\gamma$. 
  
  Moreover, under the same hypotheses, if additionally $X=Y$ and $\gamma_0$ is $\langle\sim_{f^k}\rangle$-legal, then $\gamma$ may also be chosen to be $\langle\sim_{f^k}\rangle$-legal.
\end{lem}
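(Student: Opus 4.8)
The plan is to reduce everything to the combinatorial surgeries already developed in Section~\ref{s3} and check that they preserve $f$-legality (and the stronger $\langle\sim_{f^k}\rangle$-legality). First I would dispose of the two cases $\xi$ is an edge / $\xi$ is a free turn in a uniform way: if $\xi$ is an edge $e$, then since $\gamma_0$ crosses $e$ and is $f$-legal, I apply Lemma~\ref{Surgery3} to produce a cyclically reduced loop that still crosses $e$ and crosses every oriented edge at most once; if $\xi$ is a free turn $[\bar e, e']$, then $\gamma_0$ crosses this turn and I instead invoke Lemma~\ref{Surgery4} to get a cyclically reduced loop crossing $\tau$ with every oriented edge crossed at most once. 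The key point to verify here is that edge-surgery preserves $f$-legality: edge-surgery only deletes a sub-loop and re-concatenates, and the turns crossed by $\gamma_e$ form a subset of those crossed by $\gamma$, so every turn of the new loop was already mapped by $f$ to a non-trivial turn; similarly for $\langle\sim_{f^k}\rangle$-legality since that property is also a property of each individual turn crossed. This gives a loop crossing $\xi$, every oriented edge at most once (hence every un-oriented edge at most twice, so condition~(1) of Definition~\ref{ADelta} with room to spare), and of course $f$-legality; and if $\gamma_0$ is in the tension graph then so is this loop, since all its edges already appeared in $\gamma_0$.

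The remaining issue is condition~(2) of Definition~\ref{ADelta}: the group elements $g_i$ appearing in the loop need not lie in $H$. Here I would use turn-surgery, Lemma~\ref{Surgery}. For each $g_i\neq 1$ appearing at a vertex $v$ with $G_v$ non-trivial, I want to replace $g_i$ by $h_v$ or by a product of the chosen generator with itself appropriately — but I must be careful that the turn $\tau=[\bar e_{i-1},g_ie_i]$ I am modifying is itself $f$-legal so that after replacing $g_i$ the loop stays $f$-legal. The point is that the turns we are modifying are non-free turns crossed by an $f$-legal loop, so by the legality criterion (via Lemma~\ref{L0} and the discussion after it) replacing $g_i$ by $h_v$ keeps the relevant turn $f$-legal unless $[\bar e_{i-1}, h_v e_i]$ happens to be the illegal one; but Lemma~\ref{L0} tells us that at most one choice of the group element makes the turn illegal, so if $[\bar e_{i-1}, h_ve_i]$ is illegal we may first replace $g_i$ by $h_v^2$ (or any further power, or simply note $h_v$ and $h_v^2$ cannot both give the illegal turn) — and in the $\langle\sim_{f^k}\rangle$-case Lemma~\ref{L0} again guarantees that all but one group element gives an $\langle\sim_{f^k}\rangle$-legal turn. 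Performing these replacements one turn at a time, each step changes exactly one $g_i$ and leaves edge-crossing counts unchanged (Lemma~\ref{Surgery} controls exactly how the $\#(\cdot,\tau')$ change, and in particular does not increase edge-crossings), so after finitely many steps we arrive at a loop $\gamma$ all of whose non-trivial group elements lie in $H$, crossing every oriented edge at most once, still $f$-legal (resp.\ $\langle\sim_{f^k}\rangle$-legal), still crossing $\xi$, and still in the tension graph if $\gamma_0$ was.

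The main obstacle I expect is the bookkeeping in this last step: one must ensure that the turn being altered by each turn-surgery is a turn at the vertex $v$ carrying the element $g_i$ we want to change (so that the surgery is legitimate) and that changing it does not destroy the legality of the other, already-fixed turns. Since turn-surgery at $\tau$ only affects $\#(\gamma,\tau)$ and $\#(\gamma,\tau_g)$ and leaves all other turns of the loop untouched, fixing the group elements in, say, left-to-right order along the loop does not disturb previously corrected turns; and legality of each modified turn is controlled by Lemma~\ref{L0}, which is exactly why the statement includes the hypothesis that $\xi$ is an edge or a \emph{free} turn — at a free vertex there is no group element to worry about, so $\xi$ itself is never the turn we are forced to modify. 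Assembling these observations yields the loop $\gamma\in A_\Delta$ with all the asserted properties.
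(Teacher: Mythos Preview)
Your approach is essentially the same as the paper's, and most of the argument is correct. However, there is a genuine error in the turn-surgery step. You propose that when $[\bar e_{i-1}, h_v e_i]$ happens to be $f$-illegal, you can replace $g_i$ by $h_v^2$ or some other power. But $h_v^2 \notin H$ in general: the set $H$ consists of exactly one chosen element $h_v$ per non-free vertex, not the subgroup it generates. So replacing $g_i$ by $h_v^2$ would not produce a loop in $A_\Delta$, since condition~(2) of Definition~\ref{ADelta} requires every non-trivial $g_i$ to belong to $H$.

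The fix is simple and is exactly what the paper does: when $[\bar e_{i-1}, h_v e_i]$ is illegal, replace $g_i$ by the identity element instead. Condition~(2) only constrains \emph{non-trivial} $g_i$, so $g_i = 1$ is always permitted. Lemma~\ref{L0} then guarantees that at least one of the two turns $[\bar e_{i-1}, e_i]$ and $[\bar e_{i-1}, h_v e_i]$ is $f$-legal (respectively $\langle\sim_{f^k}\rangle$-legal), since they correspond to distinct group elements and at most one choice can be illegal. (In the degenerate case $\bar e_{i-1} = e_i$, the turn with $g_i=1$ is trivial and hence illegal, but then Lemma~\ref{L0} forces the turn with $g_i = h_v$ to be legal, so the dichotomy still holds.) With this correction your argument goes through and matches the paper's proof.
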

\begin{proof}
  Let $\gamma_0$ be a legal loop crossing $\xi$. By Lemma~\ref{LCR} $\gamma_0$ is cyclically reduced.
By Lemmas~\ref{Surgery3} or \ref{Surgery4}, as appropriate, we can reduce $\gamma_0$, via edge-surgeries, to a loop $\gamma_1$,
  still crossing $\xi$, and which crosses
  any oriented edge at most once.
   In particular,
  $\gamma_1$ satisfies condition $(1)$ for belonging to $A_\Delta$.

  Since $\gamma_1$ is obtained from $\gamma_0$ by edge-surgeries, the turns (cyclically) crossed by $\gamma_1$ are also crossed by $\gamma_0$. Hence if $\gamma_0$ is $f$-legal (respectively $\langle\sim_{f^k}\rangle$-legal), then so is $\gamma_1$.

 We now perform turn surgeries on $\gamma_1$ to produce a loop in $A_{\Delta}$. Condition (1) of Definition~\ref{ADelta} is already satisfied, so we only need to concern ourselves with condition (2), which is about the non-free turns crossed by the loop. 
 
 Suppose that $\gamma$ contains a sub-path at a non-free vertex, v,  $e_{i-1} g_i e_i$, crossing the corresponding non-free turn, $[\bar{e}_{i-1}, g_i e_i]$. Let $h \in H$ be the corresponding group element of $G_v$. Then by Lemma~\ref{L0}, at least one of  $[\bar{e}_{i-1}, e_i]$ and $[\bar{e}_{i-1}, h e_i]$ is $f$-legal (respectively $\langle\sim_{f^k}\rangle$-legal). 
 
 Therefore, by making appropriate choices at each non-free turn crossed by $\gamma_1$, we can perform a sequence of turn surgeries to produce an $f$-legal  loop $\gamma$ (respectively $\langle\sim_{f^k}\rangle$-legal) which is in $A_{\Delta}$ and still crosses $\xi$ (since $\xi$ is unaffected by turn surgeries). 
 
  Moreover, $\gamma$ contains only edges that were originally
  edges of $\gamma_0$, so if
  $\gamma_0$ is in the tension graph, so is $\gamma$.
\end{proof}

\begin{rem*}
	Note that in the previous result, we prove that the path $\gamma$ crosses each oriented edge at most one, hence each un-oriented edge at most twice, even though the requirement for being in $A_{\Delta}$ is that it crosses each un-oriented edge at most 4 times. The reason is that we use two such loops in the following Lemma; we contruct the loops in Lemma~\ref{LB2} by using two loops from Lemma~\ref{LB1}. 
\end{rem*}

\begin{lem}\label{LB2}
  Let $X, Y \in \O(\G)$ and $f:X \to Y$ a straight map. Let $\Delta=\Delta_X$.
  Let $\tau=[a,gb]$ be a non-free $f$-legal turn so that both edges $a,b$ are crossed by $f$-legal loops. 
  Then, there exists a $f$-legal loop $\gamma$ which crosses $\tau$. If the loops for $a$ and $b$ are in
  the tension graph, then so is $\gamma$.

 Moreover, we could take $\gamma = \gamma' _{\tau, g_1}$, for some $g_1 \in G_v$ where $\gamma ' \in A_{\Delta}$.
  Finally, the same is true for
  $\langle\sim_{f^k}\rangle$-legality in the case where $X=Y$. 
\end{lem}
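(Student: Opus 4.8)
The plan is to reduce the statement about a turn $\tau=[a,gb]$ to the statement about edges proved in Lemma~\ref{LB1}. First I would apply Lemma~\ref{LB1} twice: once to the edge $a$, producing an $f$-legal loop $\alpha\in A_\Delta$ that crosses $a$ and crosses every oriented edge at most once, and once to the edge $b$, producing an $f$-legal loop $\beta\in A_\Delta$ crossing $b$ and each oriented edge at most once; if the given loops lie in the tension graph, then so do $\alpha$ and $\beta$, and in the case $X=Y$ both can be taken $\langle\sim_{f^k}\rangle$-legal. The idea is now to splice $\alpha$ and $\beta$ together at the vertex $v$ in such a way that the splice point creates exactly the turn $\tau$.

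Concretely, since $\alpha$ crosses $a$, after cyclically rotating we may write $\alpha$ as a loop based at $v$ of the form $a\cdots$, i.e.\ $\alpha = a\, \alpha'$ where $\alpha'$ is a path from $\tau(a)$ back to $v$; similarly write $\beta = b\,\beta'$ based at $v$. Then form the loop $\gamma_0 = a\,\alpha'\,\overline{\alpha'}\,\bar a \cdots$ --- more usefully, form $\gamma_0 = (a\,\alpha')\cdot g\cdot(\overline{b\,\beta'})^{-1}$ appropriately, so that $\gamma_0$ traverses $\alpha$ from $v$, returns to $v$, then reads the group element $g$ and traverses $\beta$ backwards: the turn crossed at the splice between the incoming edge $\bar a$ (end of the $\alpha$-portion, read backwards) and the outgoing is arranged to be $[\bar{\text{(last edge of }\alpha)}, \cdots]$. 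The cleaner bookkeeping is: the concatenation of the $\alpha$-loop and the $\beta$-loop at $v$ crosses, at $v$, all the turns $\alpha$ and $\beta$ crossed, plus possibly one new turn at the junction; by choosing which of the four half-edges ($a,\bar a$ from $\alpha$, $b,\bar b$ from $\beta$) meet at the junction and inserting the group element $g$, we can make that junction turn be exactly $\tau=[a,gb]$ or its reverse, up to first passing to a cyclically reduced representative. Reducing may shorten things, but since $\tau$ is $f$-legal it is in particular non-trivial and non-degenerate, so the subpath $a\,g\,b$ is reduced and survives; and since $\alpha,\beta$ each cross every oriented edge at most once, the concatenation crosses each oriented edge at most twice, hence each unoriented edge at most four times, so after reduction the loop still satisfies condition (1) for $A_\Delta$. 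Applying one turn surgery at the junction if needed (via Lemma~\ref{Surgery}, using $h_v\in H$ and Lemma~\ref{L0} to keep legality) and noting all vertex elements are already in $H\cup\{1\}$ by construction except possibly $g$ itself, we land on $\gamma = \gamma'_{\tau,g_1}$ with $\gamma'\in A_\Delta$.

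For the legality of $\gamma$: the turns $\gamma$ crosses are those crossed by $\alpha$, those crossed by $\beta$ (all $f$-legal, resp.\ $\langle\sim_{f^k}\rangle$-legal), plus the junction turn, which we have arranged to be $\tau$ itself, which is $f$-legal (resp.\ $\langle\sim_{f^k}\rangle$-legal) by hypothesis. Hence every turn $\gamma$ crosses is legal, so $\gamma$ is $f$-legal (resp.\ $\langle\sim_{f^k}\rangle$-legal). The tension-graph claim is immediate since $\gamma$ uses only edges already appearing in $\alpha$ or $\beta$, which lie in the tension graph when the original loops do.

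The main obstacle I anticipate is the combinatorial junction bookkeeping: when one concatenates a loop based at $v$ with another loop based at $v$, the ``turn at the junction'' is between the \emph{last} edge of the first loop and the \emph{first} edge of the second, and one must orient $\alpha$ and $\beta$ (choosing to read them forwards or backwards, and choosing the cyclic starting point) so that these two edges are precisely $\bar a$ and $gb$ --- and then check that the reduction to a cyclically reduced representative does not accidentally cancel across the junction and destroy $\tau$. This is exactly where non-degeneracy of $\tau$ (so $a,b$ are not in the same $G_v$-orbit, and $g\ne 1$ if $a=b$) is used, and where one invokes Lemma~\ref{Surgery} to record that a single turn surgery changes the crossing counts in a controlled way while preserving membership in $A_\Delta$ up to the $\gamma'_{\tau,g_1}$ form. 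Everything else is routine given Lemma~\ref{LB1}.
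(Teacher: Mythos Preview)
Your overall approach matches the paper's: apply Lemma~\ref{LB1} to get short legal loops $\alpha,\beta$ through $a,b$, concatenate them at $v$ with the group element $g$ inserted so that one junction is exactly $\tau$, and then do a turn surgery to reach the $A_\Delta$ form. The tension-graph and $\langle\sim_{f^k}\rangle$ parts are fine.

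However, there is a genuine gap in your legality argument. When you concatenate two loops based at $v$ into a single cyclic loop, you create \emph{two} new turns at $v$, not one: if $\alpha$ starts with $a$ and ends with some edge $e$, and $\beta$ starts with $b$ and ends with some edge $f$, then $\overline{\alpha}\cdot g\cdot\beta$ crosses the turn $[a,gb]=\tau$ at one junction and a second turn $\omega=[\bar f,\bar e]$ at the other (cyclic) junction. Your sentence ``the turns $\gamma$ crosses are those crossed by $\alpha$, those crossed by $\beta$, plus the junction turn, which we have arranged to be $\tau$'' is therefore false: $\omega$ is neither a turn of $\alpha$ nor of $\beta$ nor $\tau$, and there is no reason for it to be $f$-legal. (Correspondingly, the cyclic turns $[\bar e,a]$ of $\alpha$ and $[\bar f,b]$ of $\beta$ are \emph{lost} in the concatenation, so your turn-count is off on both sides.)

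This is exactly the point the paper isolates: it names the second junction turn $\omega$, observes that $\gamma$ is legal everywhere except possibly at $\omega$, and then applies Lemma~\ref{L0} (using that $v$ is non-free) to replace $\omega$ by $\omega_{h_v}$ or $\omega_{id}$, at least one of which is legal. Only after this fix does one know $\gamma$ is legal, and hence (by Lemma~\ref{LCR}) cyclically reduced, making your worries about cancellation across the junction unnecessary. Your ``turn surgery at the junction if needed'' is aimed at the wrong target: you apply it at $\tau$ to land in $A_\Delta$, but the surgery that is \emph{forced} for legality is at $\omega$. Once you add that step, your argument coincides with the paper's.
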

\begin{proof}
We orient $a,b$ so that $v$ is the common starting point. By Lemma~\ref{LB1} there exist legal
loops  $\gamma^a$ and $\gamma^b$, crossing $a$ and $b$ respectively, each
crossing any oriented edge at most once, so that $\overline{\gamma^a}$ and $\gamma^b$ and  are in $A_\Delta$; we choose, $\gamma_a, \gamma_b$ to start with $a$ and $b$ respectively.  The loop $\gamma=\overline{\gamma^a} g \gamma_b$ crosses
$\tau$ by construction, 
and it crosses any un-oriented edge at most $4$ times. Let $\omega$ be the non-free turn determined at
the concatenation  of the end $\gamma^b$ and the beginning $\overline{\gamma^a}$. By construction $\gamma$ is
legal except possibly at $\omega$. Hence, by Lemma~\ref{L0}, up to possibly replacing $\gamma$ with
$\gamma_{\omega,h_v}$ or $\gamma_{\omega,id}$ we may assume that $\gamma$ is legal. By Lemma~\ref{LCR} $\gamma$ is cyclically reduced.

Moreover, both $\gamma_{\tau,id},\gamma_{\tau,h_v}$ satisfy condition $(2)$ for
belonging to $A_\Delta$ and at least one of them is legal by Lemma~\ref{L0}.
Clearly if both $\gamma^a$ and $\gamma^b$ are in the tension graph of $f$, then so is $\gamma$.  
\end{proof}

\begin{rem}\label{RB3}
  If $f:X\to Y$ is a minimal optimal map, then any edge in the tension graph is crossed by a
  $f$-legal loop in the tension graph, and so satisfies
  hypothesis of Lemma~\ref{LB1}, and any non-free legal turn in the tension graph satisfies the
  hypothesis of Lemma~\ref{LB2}, in the tension graph.
  This is just by the definition of {\em minimal} optimal map
  (see Definition~\ref{defnmo}). Moreover, we recall also that if $\phi$ is irreducible and
  $f:X\to X$ is an optimal map representing $\phi$ on a minimally displaced point, then the
  tension graph of $f$ is the whole $X$ (see Lemma~\ref{wasfootnote}).
  \end{rem}

\begin{rem}
	Note that for Lemma \ref{LB2}, the hypothesis that the turns are non-free is essential, as the lemma fails for free turns.
	
	Let $\phi$ be the automorphism of $F_2 = <a,b>$, which sends $a$ to $aba$ and $b$ to $ba$. Then the iwip automorphism $\phi$ admits a natural train track representative - which we also call $\phi$ -  on the rose $R$, where we identify each petal of $R$ with an element of the free basis $\{a,b\}$.  Moreover, the turn $\tau = [a,b]$ is $\langle\sim_{\phi^k}\rangle$- legal, as for every positive integer $k$, $\phi^k(a),\phi^k(b)$ start with $a$,$b$, respectively.

	However, note that a legal loop cannot contain the cyclic subwords $a b^{-1}$ or $b a^{-1}$. Therefore the only legal loops are either positive or negative words in $a$ and $b$. In particular, the free turn $\tau$ is $\phi$-legal, but it cannot be extended to a $\phi$-legal loop.  
\end{rem}

\begin{lem}\label{LB4}
  Let $[\phi]\in\Out(\G)$ be an irreducible element, let $X\in\Min(\phi)$ and $f:X\to X$ a
  train track map representing $\phi$. Then, every edge of $X$ is crossed by a $\langle
   \sim_{f^k}\rangle$-legal loop. In particular, Lemma~\ref{LB1} holds true for any edge of
   $X$, and Lemma~\ref{LB2} for any non-free $\langle\sim_{f^k}\rangle$-legal turn.
 \end{lem}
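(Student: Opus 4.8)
The point of the lemma is to upgrade the conclusion of Remark~\ref{RB3}, which only produces $f$-legal loops through edges, to genuinely $\langle\sim_{f^k}\rangle$-legal ones; once we know that \emph{every} edge $e_0$ of $X$ is crossed by some $\langle\sim_{f^k}\rangle$-legal loop, the extra hypotheses of Lemmas~\ref{LB1} and \ref{LB2} become automatic (both germs of a non-free $\langle\sim_{f^k}\rangle$-legal turn are edges, and are thus crossed by $\langle\sim_{f^k}\rangle$-legal loops), so the ``in particular'' clause follows. The plan is to build such a loop by iterating $f$ on $e_0$ and then closing the resulting path up carefully.

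The key observation is: \emph{for every edge $e$ and every $N\ge 1$, every turn crossed by the path $f^N(e)$ is $\langle\sim_{f^k}\rangle$-legal.} To see this, fix $j\ge 1$ and write the reduced path $f^N(e)$ as $g_1a_1\cdots g_\ell a_\ell$. Applying $f^j$ and using the twisting of actions from Remark~\ref{actions}, $f^{N+j}(e)$ is the concatenation of the paths $f^j(a_1),\dots,f^j(a_\ell)$, twisted by $\phi^j(g_i)$ at the junctions. Since $f$ is a train track map, each $f^j(a_i)$ is itself a reduced (and, since $f$ collapses no edge, non-trivial) path, so the only place this concatenation could fail to be reduced is at a junction; and the turn at the $i$-th junction is exactly the image under $f^j$ of the turn $[\bar a_{i-1},g_ia_i]$ of $f^N(e)$. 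But $f^{N+j}(e)$ \emph{is} reduced, again because $f$ is a train track map, so $f^j$ carries every turn of $f^N(e)$ to a non-trivial turn. As $j\ge1$ was arbitrary, the observation follows.

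Now fix an edge $e_0$. Since $\phi$ is irreducible with $\lambda(\phi)>1$, the transition matrix of $f$ is irreducible with leading eigenvalue $>1$ (standard for train track representatives of irreducible automorphisms), so the combinatorial length of $f^N(e_0)$ grows without bound and $f^N(e_0)$ crosses $e_0$ arbitrarily often as $N$ grows; in particular we may pick $N$ so that $f^N(e_0)$ crosses $e_0$ at least three times, and then by the pigeonhole principle at least two of these crossings traverse $e_0$ with a common orientation $\delta$. Writing $f^N(e_0)=\alpha\cdot e_0^{\delta}\cdot\mu\cdot e_0^{\delta}\cdot\beta$ for the sub-path $\mu$ between two such crossings, the path $L:=e_0^{\delta}\cdot\mu$ closes up to a loop (the two displayed copies of $e_0^{\delta}$ have the same initial vertex) and crosses $e_0$. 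Every turn that $L$ cyclically crosses — the turns internal to $e_0^{\delta}\mu$ together with the closing turn between the last edge of $\mu$ and the initial $e_0^{\delta}$ — already occurs as a turn of $f^N(e_0)$; for the closing turn this is precisely the turn of $f^N(e_0)$ between $\mu$ and the \emph{second} occurrence of $e_0^{\delta}$. By the observation above these turns are all $\langle\sim_{f^k}\rangle$-legal, hence in particular non-trivial, so $L$ is a cyclically reduced $\langle\sim_{f^k}\rangle$-legal loop through $e_0$, which is what was needed.

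The only delicate point is the closing step: one must ensure that the turn used to close the path $f^N(e_0)$ into a loop is a turn \emph{already} traversed by $f^N(e_0)$, which is why we need two occurrences of $e_0$ with the same orientation — and this is exactly where irreducibility of $\phi$ together with $\lambda(\phi)>1$ enters, guaranteeing enough occurrences (if only opposite orientations appeared at stage $N$, replacing $N$ by $2N$ yields at least four crossings of $e_0$, hence two with a common orientation). Everything else is bookkeeping with the fact that, for a train track map, cancellation upon iterating can only be created at the old turns.
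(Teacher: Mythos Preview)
Your proof is correct and takes a genuinely different route from the paper's. The paper first observes (citing \cite[Corollary~8.12]{FM13}) that $f$ is a train track also for the $\langle\sim_{f^k}\rangle$-structure, then uses irreducibility together with two-gatedness at every vertex to produce \emph{some} $\langle\sim_{f^k}\rangle$-legal loop $\gamma_0$; the union of the supports of the iterates $f^n(\gamma_0)$ is an $f$-invariant subgraph, hence all of $X$ by irreducibility, so every edge lies on some $f^n(\gamma_0)$, which is $\langle\sim_{f^k}\rangle$-legal. You instead start from the given edge $e_0$, iterate it, and close up a subpath of $f^N(e_0)$ into a loop, using that all turns of $f^N(e_0)$ are $\langle\sim_{f^k}\rangle$-legal --- this is essentially your inline derivation of the same cited fact, applied to edges rather than loops. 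Your approach is self-contained and avoids the appeal to existence of a legal loop, but it relies on the extra hypothesis $\lambda(\phi)>1$ (to force the number of $e_0$-crossings in $f^N(e_0)$ to grow via the transition matrix), which is not in the lemma's statement --- though it is present in every application of the lemma in the paper, so no harm is done downstream. The paper's invariant-subgraph argument works regardless of $\lambda(\phi)$. One small bookkeeping point: in the graph-of-groups setting, when you close up $L=e_0^\delta\cdot\mu$, the closing turn must carry the vertex-group element that sat between $\mu$ and the second $e_0^\delta$ in $f^N(e_0)$; you should include that element in $L$ so that the closing turn is literally the turn of $f^N(e_0)$, as you assert.
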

 \begin{proof}
   Any train track map is also train track with respect to  $\langle
   \sim_{f^k}\rangle$-legality; namely, it maps  $\langle \sim_{f^k}\rangle$-legal paths to  $\langle
   \sim_{f^k}\rangle$-legal paths (\cite[Corollary~8.12]{FM13}).
   
   Since $\phi$ is irreducible and, the tension graph of $f$ is the whole of $X$
   and any vertex is at least two-gated with respect to  $\langle
   \sim_{f^k}\rangle$. Therefore, there exists a  $\langle \sim_{f^k}\rangle$-legal loop, $\gamma_0$, in $X$. The iterated images $f^n(\gamma_0)$ form a sub-graph of $X$ which is
   $f$-invariant. By irreducibility, that sub-graph must be the whole $X$. In particular any
   edge $e$ is in the loop $f^n(\gamma_0)$ for some $n$, and that loop is  $\langle
   \sim_{f^k}\rangle$-legal because $f$ is a train-track map. 
 \end{proof}

The following is just a list of immediate corollaries of previous lemmas. 
\begin{lem}\label{images}
  Let $[\phi]\in\Out(\G)$ and $\Delta$ a simplex in $\O(\G)$. Then $\mathcal C_C(\Delta)$ contains
  (at least) all turns of the following kinds:
	\begin{enumerate}[(i)]
        \item Free and finite non-free turns;
        \item $f$-images of finite non-free turns, where $f$ is any straight $\O$-map landing
          on $X$; 
        \item turns in the $f$-image of an edge crossed by some $f$-legal loop, where $f:X\to
          X$ is any straight map representing $\phi$. In particular those include:
          \begin{enumerate}[(a)]
          \item edges in the tension graph of $f$, when $f:X\to X$ is a minimal optimal map
            representing $\phi$;
          \item any edge, provided the tension graph of $f$ is the whole $X$, {\em e.g.} if
            $\phi$ is 
            irreducible and $f$ is an optimal map representing $\phi$ on the minimally
            displaced point $X$;
          \end{enumerate}

        \item turns in the $f$-image of a free turn crossed by some $f$-legal loop, where $f:X\to
          X$ is any straight map representing $\phi$.
%           In
%        particular those latter turns include:
%         \begin{enumerate}
%         \item[(c)] legal free turns in the tension graph that are crossed by some , when $f:X\to X$ is a minimal
%          optimal map representing $\phi$,
%         \item[(d)] any $\langle\sim_{f^k}\rangle$-legal free turn if $f:X\to X$ is a train
%          track representative of $\phi$. 
%          \end{enumerate}

	\end{enumerate}
\end{lem}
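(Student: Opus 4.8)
The plan is to verify each item as a direct consequence of the definitions and the lemmas already established. Recall that by Definition~\ref{defncc}, $\mathcal C_C(\Delta)$ is the complement of the candidate regular turns, and a turn is candidate regular only if it is \emph{infinite} non-free and moreover $\#(\phi(\gamma),\tau)=0$ for all loops $\gamma\in A_\Delta$. So to show a turn $\tau$ lies in $\mathcal C_C(\Delta)$ it suffices either to observe that $\tau$ is free or finite non-free, or to exhibit one loop $\gamma\in A_\Delta$ with $\#(\phi(\gamma),\tau)\neq 0$, i.e.\ a loop in $A_\Delta$ whose $\phi$-image crosses $\tau$.

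First I would dispatch (i): free and finite non-free turns are, by definition, not infinite non-free, hence not candidate regular, hence candidate critical. For (ii), if $\tau=f(\sigma)$ where $\sigma$ is a finite non-free turn and $f$ is a straight $\O$-map landing on $X$, then $\tau$ is a turn crossed by the $f$-image of any loop crossing $\sigma$; but rather than chase loops here, the cleaner route is: the image under a straight map of a finite non-free turn is a turn at $f_V(v)$ whose constituent germs come from $f$ applied to the germs of $\sigma$ — and the point is simply that $\phi(A_\Delta)$ will contain such a turn because one can pick a loop in $A_\Delta$ crossing $\sigma$ (taking $\gamma^a=\gamma^b$ trivial-ish reductions via Lemma~\ref{LB1} applied to the relevant edges, or more simply noting finite non-free vertices are two-gated so some loop in $A_\Delta$ crosses $\sigma$); actually, since (i) already places \emph{all} finite non-free turns in $\mathcal C_C(\Delta)$, the only content of (ii) concerns images that happen to be infinite non-free, and these are handled exactly as in (iii) below. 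I would therefore fold (ii) into the argument for (iii)/(iv), invoking Lemma~\ref{LB1} and Lemma~\ref{LB2} with $Y=X$.

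For (iii): suppose $\tau$ is a turn in $f(e)$ where $e$ is an edge crossed by some $f$-legal loop and $f:X\to X$ represents $\phi$. By Lemma~\ref{LB1}, $e$ is then crossed by an $f$-legal loop $\gamma\in A_\Delta$. Since $\gamma$ is $f$-legal and crosses $e$, the reduced loop $f(\gamma)$ crosses every turn in $f(e)$ — legality guarantees no cancellation destroys the turn $\tau$ sitting inside the image of the single edge $e$ — so $\#(f(\gamma),\tau)=\#(\phi(\gamma),\tau)>0$ with $\gamma\in A_\Delta$, whence $\tau$ is not candidate regular. Sub-items (a) and (b) are then immediate: for (a) apply Remark~\ref{RB3}, which says every edge in the tension graph of a minimal optimal map is crossed by an $f$-legal loop in the tension graph; for (b) apply the last sentence of Remark~\ref{RB3} (or Lemma~\ref{wasfootnote}), which says the tension graph is all of $X$ when $\phi$ is irreducible and $f$ is optimal on a minimally displaced point, so \emph{every} edge qualifies. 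Item (iv) is the same argument with "edge" replaced by "free turn": by Lemma~\ref{LB2} a free... — wait, Lemma~\ref{LB2} is for non-free turns; instead one uses Lemma~\ref{LB1} directly, since it also handles free turns crossed by an $f$-legal loop, producing $\gamma\in A_\Delta$ crossing that free turn, and then $f(\gamma)$ crosses every turn in the image of that free turn by the same no-cancellation-at-a-legal-turn observation.

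The only mildly delicate point — and the one I would state carefully rather than wave at — is the claim that if $\gamma$ is $f$-legal and crosses an edge $e$ (or a free turn $\xi$), then the reduced representative of $f(\gamma)$ actually crosses every turn appearing inside $f(e)$ (or $f(\xi)$). This is where $f$-legality is used: legality of $\gamma$ means $f$ does not fold at the turns of $\gamma$, so in passing to $[f(\gamma)]$ no cancellation eats into the interior of the image of an edge; the turns strictly interior to $f(e)$ survive verbatim, and the turn straddling the endpoint of $e$ survives because the adjacent turn of $\gamma$ was legal. I expect this bookkeeping to be the main (though routine) obstacle; everything else is a direct citation of Lemmas~\ref{LB1}, \ref{LB2}, \ref{LCR} and Remark~\ref{RB3}.
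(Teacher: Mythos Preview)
Your treatment of items (i), (iii), and (iv) is correct and matches the paper's proof: item (i) is by definition, and (iii)--(iv) follow from Lemma~\ref{LB1} (producing an $f$-legal $\gamma\in A_\Delta$ crossing the given edge or free turn) together with Remark~\ref{RB3} for the sub-items.

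There is, however, a genuine gap in your handling of (ii). You propose to split into cases according to whether the image turn $\tau=f(\sigma)$ is finite non-free or infinite non-free, and then ``fold (ii) into the argument for (iii)/(iv)'' for the latter case. This does not work: in (ii) the map $f$ is an arbitrary straight $\O$-map landing on $X$, \emph{not} one representing $\phi$. Even if you found a loop $\gamma\in A_\Delta$ with $f(\gamma)$ crossing $\tau$, this says nothing about $\phi(\gamma)$ crossing $\tau$, which is what Definition~\ref{defncc} demands. So the reduction to (iii) is simply invalid.

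The paper's argument for (ii) is one line and avoids loops entirely: since $f$ is an $\O$-map (hence $G$-equivariant at the tree level) between points with the same elliptic subgroups, a non-free vertex $v$ is sent to a vertex $f(v)$ with the same vertex group up to conjugation; in particular, if $G_v$ is finite then $G_{f(v)}$ is finite. Thus $f(\sigma)$ is itself a finite non-free turn, and (i) applies. Your ``infinite non-free case'' is therefore vacuous, but you never establish this, and your proposed workaround for it fails.
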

\begin{proof}
  $(i)$ is by definition.   For $(ii)$, note that, since $f$ is an $\O$-map, the $f$-image of a
  finite non-free vertex is again a finite non-free vertex.
    Cases $(iii)$ and $(iv)$ follow immediately from Lemmas~\ref{LB1} and~\ref{RB3}. In
    particular, case $(iii)-(a)$ follow from Lemma~\ref{LB1} by Remark~\ref{RB3}; case
    $(iii)-(b)$ from  Lemma~\ref{LB1} by Remark~\ref{RB3}; case $(iv)$ from Lemma~\ref{LB1}.
\end{proof}

\begin{prop}\label{CandLegal0}
  Let $[\phi]\in\Out(\G)$, $\Delta$ a simplex in $\O(\G)$,
  and $f:X\to X$ be a straight map representing $\phi$ at a point $X\in \Delta$.
  Let $\tau_1, \ldots, \tau_k$ be candidate-regular turns.
  Then for any $j=1,\dots,k$, any $f$-legal loop $\gamma_0$ crossing $\tau_j$ can be
  modified via turn-surgeries (at infinite non-free turns) to an $f$-legal loop $\gamma$ so that 
  \begin{enumerate}[(i)]
                \item $\#(\gamma,\tau_j)=1$ and,
		\item $\sum_{i=1}^k \#(\gamma,\tau_i) = 1$ and, 
		\item $\sum_{i=1}^k \#(f(\gamma),\tau_i) \leq 1$ and,
                \item  $\sum_{i=1}^k \#(f(\gamma),\tau_i) =0$ unless $f$ maps $\tau_j$ to some $\tau_l$.               
                \end{enumerate}
                Moreover, if in addition
                $\gamma_0$ is $\langle \sim_{f^k}\rangle$-legal, then $\gamma$ can be chosen to
                be $\langle \sim_{f^k}\rangle$-legal.  
              \end{prop}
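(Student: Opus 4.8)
The plan is to start from the $f$-legal loop $\gamma_0$ crossing $\tau_j$, and to use the machinery of Lemmas~\ref{LB1} and~\ref{Surgery} (turn-surgery) to cut $\gamma_0$ down to a loop that crosses $\tau_j$ exactly once while keeping it $f$-legal. First I would apply Lemma~\ref{LB1} to $\gamma_0$ and the turn $\xi := \tau_j$ (note $\tau_j$ is infinite non-free, so it is certainly not free — but $\tau_j$ is a \emph{turn}, and Lemma~\ref{LB1} as stated is about edges or free turns, so here I actually want to invoke Lemma~\ref{LB2} instead): since $\tau_j = [a,g b]$ is a non-free $f$-legal turn crossed by $\gamma_0$, both $a$ and $b$ are crossed by $f$-legal loops (namely sub-loops obtained from $\gamma_0$ by edge-surgery, cf.\ Lemma~\ref{Surgery4}), so Lemma~\ref{LB2} produces an $f$-legal loop $\gamma = \gamma'_{\tau_j, g_1}$ with $\gamma' \in A_\Delta$ crossing $\tau_j$, and by the proof of Lemma~\ref{LB2} it crosses every oriented edge at most once (it is built from two loops each crossing every oriented edge at most once, joined at $v$). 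Hence $\#(\gamma,\tau_j) = 1$: this gives (i).

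For (ii) and (iii), the key point is that the $\tau_i$ are \emph{candidate-regular}, hence infinite non-free, and by Definition~\ref{defncc} satisfy $\#(\phi(\eta),\tau_i) = 0$ for every $\eta \in A_\Delta$. Now $\gamma$ crosses finitely many infinite non-free turns (at most one per oriented edge it traverses, but only a bounded number). At each infinite non-free vertex $w$ that $\gamma$ passes through, other than $v$, I would perform a turn-surgery replacing the relevant group element by the trivial element, if necessary, turning the turn there into a degenerate turn $[\bar e, e']$ with distinct edges — wait, more carefully: the point is that if $\gamma$ crosses some $\tau_i$ with $i \ne j$ at a vertex $w$, I can use Lemma~\ref{Surgery} to do a turn-surgery at $w$ replacing $\tau_i = [\bar e_{m-1}, g_m e_m]$ by $[\bar e_{m-1}, g e_m]$ for a suitable $g \in G_w$, decreasing $\#(\gamma,\tau_i)$ by one; by Lemma~\ref{L0} I can make a choice of $g$ that keeps the loop $f$-legal (resp.\ $\langle\sim_{f^k}\rangle$-legal), provided the new turn is not again some $\tau_l$ — and since there are only finitely many $\tau_l$ and $G_w$ is infinite, such a choice exists. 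Iterating over the finitely many occurrences of turns among $\tau_1,\dots,\tau_k$ other than $\tau_j$ (and over the at most one occurrence of $\tau_j$ beyond the first, though (i) already ensures there is only one), I arrive at $\gamma$ with $\sum_i \#(\gamma,\tau_i) = 1$, the single contribution being $\#(\gamma,\tau_j) = 1$. Throughout, turn-surgeries at infinite non-free turns do not change which \emph{edges} $\gamma$ crosses, so $\gamma$ stays in $A_\Delta$ (condition (1) is preserved), and after a final turn-surgery we may also re-impose condition (2) so that $\gamma \in A_\Delta$; then candidate-regularity of the $\tau_i$ gives $\#(f(\gamma),\tau_i) = \#(\phi(\gamma),\tau_i) = 0$ for all $i$ whenever $\gamma \in A_\Delta$ — but we must be careful, since after the surgery producing (i) via Lemma~\ref{LB2} the loop is $\gamma'_{\tau_j,g_1}$ which need not lie in $A_\Delta$ itself. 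Here I would argue: $f(\gamma)$ crosses $\tau_i$ only where $f$ of some edge or germ of $\gamma$ crosses $\tau_i$; the contributions coming from $f$-images of edges of $\gamma$ are controlled by applying candidate-regularity to the (finitely many) loops in $A_\Delta$ from which $\gamma$ was assembled, each of which has zero $\phi$-crossing with every $\tau_i$; the only possibly-surviving contribution is at the image of the turn $\tau_j$ itself, i.e.\ $f(\tau_j)$, which contributes to $\#(f(\gamma),\tau_i)$ at most once and exactly when $f$ maps $\tau_j$ to $\tau_i =: \tau_l$. This yields (iii) and (iv) simultaneously, and (ii) as already noted.

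The $\langle\sim_{f^k}\rangle$-legal variant follows verbatim: every invocation of Lemma~\ref{LB1}, Lemma~\ref{LB2}, and Lemma~\ref{L0} has a stated $\langle\sim_{f^k}\rangle$-version in the case $X = Y$, so each legality-preserving choice of group element above can be made to preserve $\langle\sim_{f^k}\rangle$-legality, and the image-counting arguments are identical since $\#(f(\gamma),\tau_i)$ and $\#(\phi(\gamma),\tau_i)$ are unaffected by which notion of legality $\gamma$ enjoys.

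The main obstacle I anticipate is bookkeeping around which loop actually lies in $A_\Delta$: Lemma~\ref{LB2} hands back $\gamma'_{\tau_j,g_1}$, not $\gamma' \in A_\Delta$, and candidate-regularity is phrased only for loops genuinely in $A_\Delta$. The fix is to decompose $f(\gamma)$ according to the edges and the single turn $\tau_j$ of $\gamma$, bound the edge-contributions using the assembling loops in $A_\Delta$ (via Lemma~\ref{LB1}), and isolate the lone turn-contribution at $f(\tau_j)$; one also has to check that the turn-surgeries used to achieve (ii) never create a new crossing of some $\tau_i$, which is exactly where the infinitude of the vertex group and the finiteness of $\{\tau_1,\dots,\tau_k\}$ are used together with Lemma~\ref{L0}. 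Getting the quantifiers straight in this last counting step is the delicate part; everything else is a routine application of the surgery lemmas already established.
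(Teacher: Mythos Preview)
Your approach is more convoluted than necessary and contains a genuine gap at the step controlling $\sum_i \#(f(\gamma),\tau_i)$.

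The paper does not pass through Lemma~\ref{LB2} or $A_\Delta$ at all. It works directly with the given $\gamma_0$: at \emph{every} infinite non-free turn of $\gamma_0$ other than the designated occurrence of $\tau_j$, it performs a turn-surgery choosing the new group element $a$ so that the resulting turn $\tau_a$ simultaneously satisfies three conditions: (1) $\tau_a$ is not among the $\tau_i$'s; (2) $\tau_a$ is $f$-legal (resp.\ $\langle\sim_{f^k}\rangle$-legal); and crucially (3) $f(\tau_a)$ is not among the $\tau_i$'s. Such a choice exists because the vertex group is infinite, there are finitely many $\tau_i$, and Lemma~\ref{L0} rules out at most one $a$. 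Conditions (1) and (2) give (i), (ii) and legality. For (iii) and (iv), one then decomposes the turns of $f(\gamma)$ into four types --- those inside $f$-images of edges, and $f$-images of free, finite non-free, and infinite non-free turns of $\gamma$ --- and observes via Lemma~\ref{images} that the first three types are all candidate critical (hence never equal to any $\tau_i$), while condition (3) handles the fourth type, leaving only the single possible contribution from $f(\tau_j)$.

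Your gap is precisely the absence of condition (3). You perform surgeries only at crossings of $\gamma$ with some $\tau_i$ ($i\neq j$), imposing only that the new turn be legal and not itself some $\tau_l$. But nothing prevents the $f$-image of that new turn from being one of the $\tau_i$'s, and your image-counting argument (``the only possibly-surviving contribution is at the image of $\tau_j$'') does not address the $f$-images of the infinite non-free turns you have just modified. Your attempt to cover this by decomposing $f(\gamma)$ via loops in $A_\Delta$ controls only the turns interior to $f$-images of edges; it says nothing about $f$-images of the turns of $\gamma$ themselves. The fix is exactly the paper's condition (3), and once you have it, the detour through $A_\Delta$ and Lemma~\ref{LB2} becomes unnecessary: Lemma~\ref{images} already gives the needed criticality of the edge- and free/finite-turn-image contributions directly.
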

      \begin{proof}
 Without loss of generality we may assume that $\gamma_0$ crosses $\tau_1$.
We modify $\gamma_0$ by using turn-surgeries (Lemma~\ref{Surgery}) in order to
get a new loop $\gamma$ that satisfies the extra properties. We will apply surgeries
only on turns at non-free vertices with infinite stabilisers, so there will be infinitely many
choices every time.

Concretely, let $\gamma_0$ be represented as $g_1e_1\dots g_ne_n$ (with cyclic indices modulo
$n$) so that $\tau_1=[\bar{e}_1,g_2e_2]$.
For any infinite non-free turn $\tau=[\bar{e}_i,g_{i+1}e_{i+1}]$ with $i\neq 1$, we choose an element
$a$ in the corresponding vertex group so that $\tau_a=[\bar{e}_i,ae_{i+1}]$ satisfies

\begin{enumerate}[(1)]
\item $\tau_a$ is not one of the $\tau_i$; 
\item $\tau_a$ is $\langle \sim_{f^k}\rangle$-legal;
\item $f(\tau_a)$ is not one of the $\tau_i$;
\end{enumerate}

Such an element exists because $\tau$ is infinite non-free, there are finitely many $\tau_i$, and
by Lemma~\ref{L0} all but one choice for $a$ produces a $\langle \sim_{f^k}\rangle$-legal turn. We define $\gamma$ as the
result of the turn-surgeries at all such infinite non-free vertices, by using the chosen
group elements.

Condition $(2)$ assures that $\gamma$ is legal, and $\langle \sim_{f^k}\rangle$-legal if
$\gamma_0$ where so. Since we did not touch $\tau_1$, condition
$(1)$ gives us point $(i)$ and $(ii)$. As for $(iii)$, let's analyse the turns crossed by
$f(\gamma)$. They come in several types: 
\begin{enumerate}[(a)]
	\item a turn crossed by the $f$-image of an edge of $\gamma$,
	\item the $f$-image of a free turn of $\gamma$,
	\item the $f$-image of a finite non-free turn of $\gamma$,
	\item the $f$-image of an infinite non-free turn of $\gamma$. 
\end{enumerate} 
By Lemma~\ref{images}, the first three are all candidate critical, so none of the $\tau_i$
appears in this way (note that in type (b), the free turns that appear, are crossed by the $f$-legal loop $\gamma$, so the hypothesis of \ref{images} (iv) is satisfied). Crossings of kind $(d)$ are avoided by condition $(3)$, except possibly
if $f(\tau_1)$ equals one of the $\tau_i$'s. Points $(iii)$ and $(iv)$ follow.

\end{proof}

\begin{cor}\label{CandLegal2}
  Let $[\phi]\in\Out(\G)$, $\Delta$ a simplex in $\O(\G)$,
  and $f:X\to X$ be a minimal optimal map representing $\phi$ on a point $X\in \Delta$. 
  
	Suppose that $\tau_1, \ldots, \tau_k$ are candidate regular turns. If there is a turn
        $\tau_j$ 
        which is $f$-legal and in the tension graph of $f$, then there exists an
        $f$-legal loop, $\gamma$, in the tension graph, and  such that:
	\begin{enumerate}[(i)]
        \item $\#(\gamma,\tau_j) = 1$ and, 
        \item $\sum_{i=1}^k \#(\gamma,\tau_i) = 1$ and,
        \item $\sum_{i=1}^k \#(f(\gamma),\tau_i) \leq 1$ and,
        \item $\sum_{i=1}^k \#(f(\gamma),\tau_i) =0$ unless $f$ maps $\tau_j$ to some $\tau_l$.   
	\end{enumerate}

      \end{cor}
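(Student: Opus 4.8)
The plan is to deduce Corollary~\ref{CandLegal2} from Proposition~\ref{CandLegal0} by simply observing that all the hypotheses of the Proposition are in place and that the ``tension graph'' bookkeeping survives the turn-surgeries performed there. First I would invoke Remark~\ref{RB3}: since $f:X\to X$ is a minimal optimal map, every edge in the tension graph is crossed by an $f$-legal loop \emph{in the tension graph}. Now $\tau_j$ is assumed $f$-legal and in the tension graph, so both germs of $\tau_j$ are edges (or contained in edges) of the tension graph; applying Lemma~\ref{LB2} in the tension graph yields an $f$-legal loop $\gamma_0$ that crosses $\tau_j$ and lies entirely in the tension graph.

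Next I would feed $\gamma_0$ into Proposition~\ref{CandLegal0}, with the candidate-regular turns $\tau_1,\dots,\tau_k$. The Proposition produces an $f$-legal loop $\gamma$, obtained from $\gamma_0$ by turn-surgeries at infinite non-free turns, satisfying exactly the four counting conditions (i)--(iv). The only thing to check beyond a verbatim appeal is that $\gamma$ is still in the tension graph. But turn-surgeries, as defined before Lemma~\ref{Surgery}, only replace a vertex-group element $g_i$ by another element of the \emph{same} vertex group; they neither add nor remove edges. Hence $\gamma$ crosses exactly the same set of (unoriented) edges as $\gamma_0$, so $\gamma$ is supported on the tension graph precisely because $\gamma_0$ is. This is the same observation already used at the end of the proof of Lemma~\ref{LB1} (``$\gamma$ contains only edges that were originally edges of $\gamma_0$'').

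So the proof is essentially: (1) use Remark~\ref{RB3} + Lemma~\ref{LB2} to get an $f$-legal loop $\gamma_0$ through $\tau_j$ inside the tension graph; (2) apply Proposition~\ref{CandLegal0} to $\gamma_0$ to get $\gamma$ with properties (i)--(iv); (3) note the turn-surgeries don't change the edge-support, so $\gamma$ is still in the tension graph. There is no real obstacle here — the corollary is exactly Proposition~\ref{CandLegal0} specialised to minimal optimal maps with the extra input, guaranteed by minimality, that one can start from a legal loop already living in the tension graph. The one point requiring a sentence of care is precisely that tension-graph membership is preserved under the turn-surgery step, and that follows immediately from the definition of turn-surgery.
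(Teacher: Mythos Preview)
Your proof is correct and follows essentially the same route as the paper: invoke Remark~\ref{RB3} so that Lemma~\ref{LB2} applies to the (infinite non-free) turn $\tau_j$, produce an $f$-legal loop $\gamma_0$ in the tension graph crossing $\tau_j$, apply Proposition~\ref{CandLegal0}, and observe that turn-surgeries preserve the edge-support so $\gamma$ remains in the tension graph. The only point you leave slightly implicit is that $\tau_j$ is non-free (needed for Lemma~\ref{LB2}), but this is immediate from the definition of candidate regular.
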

      \begin{proof}
        By Remark~\ref{RB3}, Lemma~\ref{LB2} applies for the infinite non-free turn, $\tau_j$. So there is a 
$f$-legal loop  $\gamma_0$, in the tension graph, and crossing $\tau_j$.
Proposition~\ref{CandLegal0} applies. Since $\gamma$ is obtained from $\gamma_0$ via
turn-surgeries, and since $\gamma_0$ is in the tension graph, so also $\gamma$ is in the
tension graph.
\end{proof}

\begin{cor}\label{candlegal3}
  Let $[\phi]\in\Out(\G)$ an irreducible element, $\Delta$ a simplex in $\O(\G)$,
  and $f:X\to X$ a train-track map representing $\phi$ on a point $X\in \Delta$. 
  
	Suppose that $\tau_1, \ldots, \tau_k$ are candidate regular turns. If there is a turn
        $\tau_j$ which is $\langle \sim_{f^k}\rangle$-legal, then there exists a
        $\langle \sim_{f^k}\rangle$-legal loop, $\gamma$   such that:
	\begin{enumerate}[(i)]
        \item $\#(\gamma,\tau_j) = 1$ and, 
        \item $\sum_{i=1}^k \#(\gamma,\tau_i) = 1$ and,
        \item $\sum_{i=1}^k \#(f(\gamma),\tau_i) \leq 1$ and,
        \item $\sum_{i=1}^k \#(f(\gamma),\tau_i) =0$ unless $f$ maps $\tau_j$ to some $\tau_l$.   
          
	\end{enumerate}

\end{cor}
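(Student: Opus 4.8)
The plan is to mirror the proof of Corollary~\ref{CandLegal2}, with the roles of ``minimal optimal map / tension graph'' replaced by ``train-track map / $\langle\sim_{f^k}\rangle$-legality'', and with Lemma~\ref{LB4} taking the place of Remark~\ref{RB3}.

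First I would record that each $\tau_i$, being candidate regular, is by Definition~\ref{defncc} infinite non-free; in particular the distinguished turn $\tau_j$ is a non-free turn. Since $[\phi]$ is irreducible and $f:X\to X$ is a train-track map representing it on $X$, Lemma~\ref{LB4} tells us that every edge of $X$ is crossed by a $\langle\sim_{f^k}\rangle$-legal loop, and consequently that Lemma~\ref{LB2} is available for any non-free $\langle\sim_{f^k}\rangle$-legal turn. Applying this to $\tau_j$, which is non-free and, by hypothesis, $\langle\sim_{f^k}\rangle$-legal, we obtain a $\langle\sim_{f^k}\rangle$-legal loop $\gamma_0$ crossing $\tau_j$.

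Next I would feed $\gamma_0$ into Proposition~\ref{CandLegal0}: its hypotheses hold verbatim ($[\phi]\in\Out(\G)$, $\Delta$ our simplex, $f:X\to X$ representing $\phi$, $\tau_1,\dots,\tau_k$ candidate regular, and $\gamma_0$ a legal loop crossing $\tau_j$). The proposition then produces, via turn-surgeries at infinite non-free turns, a legal loop $\gamma$ satisfying conditions (i)--(iv); and its closing clause guarantees that, since $\gamma_0$ was chosen $\langle\sim_{f^k}\rangle$-legal, $\gamma$ may be taken $\langle\sim_{f^k}\rangle$-legal as well. This $\gamma$ is exactly the loop asserted in the statement, so the corollary follows.

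The only point requiring genuine (if modest) care — the step I expect to be the main obstacle — is confirming that the $\langle\sim_{f^k}\rangle$-legal version of Lemma~\ref{LB2} truly applies here: this needs not merely that $\tau_j$ is $\langle\sim_{f^k}\rangle$-legal but that \emph{both} of its defining edges are crossed by $\langle\sim_{f^k}\rangle$-legal loops, which is precisely what Lemma~\ref{LB4} delivers under irreducibility. Everything else is bookkeeping; note that, in contrast with Corollary~\ref{CandLegal2}, there is no tension-graph constraint to propagate through the surgeries, so the argument is actually slightly shorter.
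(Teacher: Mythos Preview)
Your proposal is correct and follows essentially the same line as the paper's own proof: invoke Lemma~\ref{LB4} (via Lemma~\ref{LB2}) to produce a $\langle\sim_{f^k}\rangle$-legal loop $\gamma_0$ through the infinite non-free turn $\tau_j$, then feed $\gamma_0$ into Proposition~\ref{CandLegal0} and use its $\langle\sim_{f^k}\rangle$-legal clause. Your added care in spelling out why both edges of $\tau_j$ are crossed by $\langle\sim_{f^k}\rangle$-legal loops (so that Lemma~\ref{LB2} genuinely applies) is welcome but not a departure from the paper's argument.
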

\begin{proof}
  Lemma~\ref{LB4} applied for $\tau_j$ (which is necessarily infinite non-free, as it is regular)  guarantees
the existence of a $\langle \sim_{f^k}\rangle$-legal
loop  $\gamma_0$  crossing $\tau_j$.
Proposition~\ref{CandLegal0} applies.
\end{proof}

\begin{cor}\label{C2}
  Let $[\phi]\in\Out(\G)$. Let $\Delta$ be a simplex of $\O(\G)$ and $f$ be a minimal
  optimal   map representing $\phi$ on a point $X$ of $\Delta$. Let $\tau$ be a non-degenerate
  candidate  regular turn, and let $\Delta_\tau$ be the simplex
  obtained by 
  folding $\tau$. If $X^t$ denotes the point of $\Delta_\tau$ obtained from $X$ by folding
  $\tau$ by an amount $t$, then $$\lambda_\phi(X^t)\geq\lambda_\phi(X).$$
  Moreover, if $\tau$ is $f$-legal and in the tension graph, and if $\lambda(\phi)>1$, then
  the inequality is strict. 
\end{cor}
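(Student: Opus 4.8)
The plan is to deduce both assertions from Lemma~\ref{L2}, the point being that a candidate regular turn $\tau$ automatically satisfies the hypothesis $(\heartsuit)$ of that lemma, and in fact satisfies it with the strict inequality provided we can locate an $f$-legal loop through $\tau$ in the tension graph. First I would dispose of the non-legal case: if $\tau$ is not $f$-legal, then $f$ maps one of the two edges defining $\tau$ to a point, and folding $\tau$ is covered by the first part of the statement only through the legal loop found elsewhere in $X$; more precisely, since $\tau$ is infinite non-free (it is regular, so by definition of candidate regular it has infinite vertex group and $\#(\phi(\gamma),\tau)=0$ for all $\gamma\in A_\Delta$), I claim that \emph{every} $f$-legal loop $\gamma$ in the tension graph satisfies $\#(f(\gamma),\tau)=0$. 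Indeed, by Remark~\ref{RB3} the tension graph of the minimal optimal map $f$ is all we need: any $f$-legal loop can be reduced via Lemma~\ref{LB1} to one in $A_\Delta$ crossing the same turns, and the turns crossed by $f(\gamma)$ are, by Lemma~\ref{images}, all candidate critical, hence none of them equals the regular turn $\tau$. This gives $\#(f(\gamma),\tau)=0\le \lambda_\phi(X)\#(\gamma,\tau)$, which is exactly $(\heartsuit)$, and Lemma~\ref{L2} yields $\lambda_\phi(X^t)\ge\lambda_\phi(X)$ as soon as there exists \emph{some} $f$-legal loop $\gamma$ in the tension graph (not necessarily through $\tau$). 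Such a loop exists because $f$ is a minimal optimal map, so by definition every edge in the tension graph is crossed by an $f$-legal loop in the tension graph (Remark~\ref{RB3}); this proves the first inequality in full generality, regardless of whether $\tau$ itself is legal.

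For the strict inequality I would apply Corollary~\ref{CandLegal2} to the single regular turn $\tau_1=\tau$ (so $k=1$). By hypothesis $\tau$ is $f$-legal and in the tension graph, so Corollary~\ref{CandLegal2} produces an $f$-legal loop $\gamma$ in the tension graph with $\#(\gamma,\tau)=1$ and $\#(f(\gamma),\tau)\le 1$, and moreover $\#(f(\gamma),\tau)=0$ unless $f$ maps $\tau$ to $\tau$ itself. In either case $\#(f(\gamma),\tau)\le 1 = \#(\gamma,\tau)$. To get the \emph{strict} form of $(\heartsuit)$, namely $\#(f(\gamma),\tau) < \lambda_\phi(X)\#(\gamma,\tau) = \lambda_\phi(X)$, I use the hypothesis $\lambda(\phi)>1$: since $X$ supports an optimal map for $\phi$ one has $\lambda_\phi(X)\ge\lambda(\phi)>1$, so $\#(f(\gamma),\tau)\le 1 < \lambda_\phi(X)$. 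Thus $(\heartsuit)$ holds with strict inequality, and the strict conclusion of Lemma~\ref{L2} gives $\lambda_\phi(X^t)>\lambda_\phi(X)$.

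The main obstacle I anticipate is the bookkeeping in the first (non-strict) case: one must be careful that the $f$-legal loop $\gamma$ provided by the ``minimal optimal'' hypothesis is genuinely a loop in the tension graph (so that the computation in the proof of Lemma~\ref{L2}, which compares $L_{X^t}(f(\gamma))$ with $\lambda_\phi(X)L_X(\gamma)$, is valid — this equality uses that $\gamma$ is maximally stretched), and that the turn $\tau$ being folded is non-degenerate so that $X^t\in\Delta_\tau$ is a genuine point of $\O(\G)$, which is part of the hypothesis. A secondary subtlety is that when $f$ maps $\tau$ to $\tau$ one only gets $\#(f(\gamma),\tau)\le\#(\gamma,\tau)$ rather than a strict drop, but this is harmless because the strict inequality in $(\heartsuit)$ is then supplied entirely by the factor $\lambda_\phi(X)>1$ multiplying $\#(\gamma,\tau)=1$; no strict decrease in crossing numbers is needed. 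Everything else is a direct citation of Lemma~\ref{L2}, Remark~\ref{RB3}, Lemma~\ref{images}, and Corollary~\ref{CandLegal2}.
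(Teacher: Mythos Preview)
Your approach is essentially the paper's: both parts reduce to Lemma~\ref{L2} via a suitable $f$-legal loop in the tension graph, with the non-strict case using a loop in $A_\Delta$ and the strict case using Corollary~\ref{CandLegal2} with $k=1$ together with $\lambda_\phi(X)>1$.

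One expositional point: in the first paragraph you assert that \emph{every} $f$-legal loop $\gamma$ in the tension graph satisfies $\#(f(\gamma),\tau)=0$, and justify this via Lemma~\ref{LB1} (``reduced\ldots\ to one in $A_\Delta$ crossing the same turns'') and Lemma~\ref{images}. Neither citation quite supports that universal claim: Lemma~\ref{LB1} does not preserve the full set of turns crossed, and Lemma~\ref{images} does not cover $f$-images of arbitrary infinite non-free turns of $\gamma$. Fortunately this stronger claim is not needed. The paper (and, in effect, the end of your own paragraph) just takes a single $\gamma\in A_\Delta$ in the tension graph --- which exists by Remark~\ref{RB3} and Lemma~\ref{LB1} --- and then $\#(f(\gamma),\tau)=\#(\phi(\gamma),\tau)=0$ is immediate from the \emph{definition} of candidate regular, without any appeal to Lemma~\ref{images}. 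The opening case split on whether $\tau$ is $f$-legal is likewise unnecessary for the non-strict inequality. With that streamlining your argument matches the paper's exactly.
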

\begin{proof}
  By Remark~\ref{RB3} and Lemma~\ref{LB1}, there exists an $f$-legal loop $\gamma \in A_{\Delta}$ in the
tension graph, and any turn which is crossed by the image of $f(\gamma) = \phi (\gamma)$ is
candidate critical just by definition of candidate critical.
Since $\tau$ is regular
$$\#(f(\gamma),\tau)=0,$$ the non-strict version
of hypothesis~(\ref{in1}) of Lemma~\ref{L2} is fulfilled, and first claim follows.

If in addition  $\tau$ is legal and in the tension graph, we invoke Corollary~\ref{CandLegal2} (with $k=1$) to build a legal loop
$\gamma$ in the tension graph so that $\#(\gamma,\tau)>0$ and
$$\#(f(\gamma),\tau)\leq \#(\gamma,\tau).$$ The non strict version of inequality~(\ref{in1})
follows because $\lambda_\phi(X)\geq 1$. Moreover, 
if $\lambda(\phi)>1$, then $\lambda_\phi(X)\geq\lambda(\phi)>1$ and also the strict version is proved.
\end{proof}

\begin{cor}\label{FoldingCandidateRegular1} Let $[\phi]\in \Out(\G)$. 
	Let $\tau$ be a non-degenerate candidate regular turn with respect to a simplex
        $\Delta$ of $\O(\G)$, and $\Delta_{\tau}$ be the simplex obtained by folding
        $\tau$.Then $$\lambda_\phi(\Delta_{\tau}) \geq \lambda_\phi(\Delta).$$
\end{cor}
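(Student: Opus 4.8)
\medskip

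The plan is to deduce the statement from Corollary~\ref{C2} by an infimum argument over the open simplex $\Delta$, together with the continuity of the displacement function and the structure of the unfolding projection. Recall that $\lambda_\phi(\Delta) = \inf_{X \in \Delta} \lambda_\phi(X)$ and likewise $\lambda_\phi(\Delta_\tau) = \inf_{Y \in \Delta_\tau} \lambda_\phi(Y)$. So I need to show that for every $Y \in \Delta_\tau$ there is a point (or a sequence of points) of $\Delta$ whose displacement is at most $\lambda_\phi(Y)$, up to $\varepsilon$; equivalently, that no point of $\Delta_\tau$ can have displacement strictly below $\lambda_\phi(\Delta)$.

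\medskip

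First I would fix $Y \in \Delta_\tau$ and use Lemma~\ref{unf}: the point $X := \unf_\tau(Y) \in \Delta$ is obtained by unfolding, and folding $\tau$ on $X$ by the amount $L_Y(e)$ recovers $Y$; moreover the whole folding path $X = X^0, \dots, X^{L_Y(e)} = Y$ stays in $\overline{\Delta_\tau}$. Now pick an optimal map $f$ representing $\phi$ at $X$; after possibly passing to a minimal optimal map (which does not change $\lambda_\phi(X)$) I may assume $f$ is a minimal optimal map, so that Corollary~\ref{C2} applies to the turn $\tau$ at $X$. The catch is that $\tau$ must be \emph{non-degenerate} as a turn of the graph $X \in \Delta$; this is part of the hypothesis (we only assumed $\tau$ is a non-degenerate candidate regular turn of $\Delta$), and the unfolding construction is set up precisely so that the folding of $\tau$ on any $X \in \Delta$ produces $\Delta_\tau$. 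Corollary~\ref{C2} then gives $\lambda_\phi(X^t) \geq \lambda_\phi(X) \geq \lambda_\phi(\Delta)$ for all $t$, and in particular $\lambda_\phi(Y) = \lambda_\phi(X^{L_Y(e)}) \geq \lambda_\phi(\Delta)$.

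\medskip

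Finally, taking the infimum over $Y \in \Delta_\tau$ yields $\lambda_\phi(\Delta_\tau) \geq \lambda_\phi(\Delta)$, as desired. One technical point to be careful about: Corollary~\ref{C2} is stated for $X$ a point of $\Delta$ and $f$ a \emph{minimal} optimal map at $X$; such a map always exists (one can always minimise the number of gates / pass to a minimal optimal representative without increasing the stretch), and the non-strict conclusion $\lambda_\phi(X^t) \geq \lambda_\phi(X)$ is all that is needed here — we do not need $\tau$ to be $f$-legal or in the tension graph, so the hypotheses of the non-strict part of Corollary~\ref{C2} are met unconditionally for a candidate regular $\tau$. The main (and only real) obstacle is the bookkeeping that $\unf_\tau(Y)$ is genuinely a point of the open simplex $\Delta$ for every $Y \in \Delta_\tau$, i.e.\ that all unfolded edge-lengths are strictly positive; this is immediate from the definition of $\unf_\tau$ since edge-lengths in $Y$ are positive and the new lengths of $e_1, e_2$ are sums of positive quantities. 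With that in hand the argument is just ``unfold, apply Corollary~\ref{C2}, take inf''.
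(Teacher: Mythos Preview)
Your proof is correct and follows essentially the same approach as the paper's: for each $Y\in\Delta_\tau$ you take $X=\unf_\tau(Y)\in\Delta$, apply Corollary~\ref{C2} (non-strict version, with a minimal optimal map at $X$) to get $\lambda_\phi(Y)\geq\lambda_\phi(X)\geq\lambda_\phi(\Delta)$, and then take infima. The paper's proof is the same argument in three lines; your extra remarks (existence of minimal optimal maps, positivity of unfolded edge-lengths, and that only the non-strict part of Corollary~\ref{C2} is needed) are all valid and just make explicit what the paper leaves implicit.
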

\begin{proof}
Any point $Y\in\Delta_\tau$ is obtained by folding $\unf_\tau(Y)$ (Lemma~\ref{unf}).
Corollary~\ref{C2} tells us $\lambda_\phi(Y)\geq\lambda_\phi(\unf_\tau(Y))$. The claim follows
taking infima. 
\end{proof}

Corollary~\ref{FoldingCandidateRegular1} provides the kind of non-strict inequalities we are
searching for. We now focus on turns whose folding guarantees the strict inequality $\lambda_\phi(\Delta_{\tau}) > \lambda_\phi(\Delta)$.

\begin{lem}\label{L3}
	Let $[\phi]\in \Out(\G)$.  Let $X\in \O(\G)$ and $f:X\to X$ be an optimal map representing
	$\phi$. For any $X_0\in\Delta_X$ let $f^0:X_0\to X_0$ denote the map $f$ read in $X_0$.
	
	There is a neighbourhood $U$ of $X$ in $\Delta_X$ such that for any $X_0\in U$
	there is a minimal optimal map $f_0:X_0\to X_0$  such that
	$$d_\infty(f^0,f_0)<\e$$
	and, for any $x,y\in X$ $$|d_X(f(x),f(y))-d_{X_0}(f^0(x),f^0(y)|<\e.$$ 
\end{lem}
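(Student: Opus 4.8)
The plan is to establish Lemma~\ref{L3} via a continuity/compactness argument, exploiting that the map $f$ is combinatorially rigid (determined by the images of vertices and the vertex-group isomorphisms) while only the metric varies as we move within the open simplex $\Delta_X$. First I would observe that since $f^0$ is ``$f$ read in $X_0$,'' the combinatorial data of $f^0$ is the \emph{same} for all $X_0 \in \Delta_X$: the same edge-paths are assigned to edges, the same vertex-group isomorphisms are used. Only the lengths change. Consequently, the stretching behaviour of $f^0$ — and in particular the tension graph, the quantities $L_{X_0}(f^0(e))/L_{X_0}(e)$, and the Lipschitz constant $\Lambda(X_0, \phi X_0)$ — all depend continuously on the length vector parametrising $\Delta_X$, because each is a finite max/ratio of linear functions of the lengths. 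The second displayed inequality, $|d_X(f(x),f(y)) - d_{X_0}(f^0(x),f^0(y))| < \e$, is then immediate on a small enough neighbourhood $U$: $d_{X_0}(f^0(x),f^0(y))$ is, for $x,y$ ranging over $X$, a (uniformly, since $X$ is a finite graph) continuous function of the lengths, agreeing with $d_X(f(x),f(y))$ at the centre $X_0 = X$; one uses compactness of $X \times X$ (or rather the finitely many edge-pairs) to get uniformity in $x,y$.

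The substantive point is producing, for each nearby $X_0$, a \emph{minimal optimal} map $f_0 : X_0 \to X_0$ that is $\e$-close in $d_\infty$ to $f^0$. Here I would argue as follows. The map $f^0$ is close to being optimal — its tension graph is large because $f$ was optimal on $X$ and optimality is an open-ish condition — but $f^0$ need not itself be optimal on $X_0$, and even if optimal it may not be minimal. So one first replaces $f^0$ by a genuinely optimal map $g_0$ representing $\phi$ on $X_0$: optimal maps always exist, and one can obtain $g_0$ from $f^0$ by a bounded ``homotopy'' — adjusting the images of vertices slightly and re-straightening — whose size is controlled by how far $f^0$ is from optimal, which in turn tends to $0$ as $X_0 \to X$ (since $f$ itself is optimal at $X$). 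Then one passes from $g_0$ to a minimal optimal map $f_0$ by the standard process of pushing across legal turns / collapsing redundancy in the tension graph until minimality is achieved; crucially, near $X$ the tension graph of $g_0$ is already (close to) all of $X$ because the tension graph of $f$ at $X$ is determined by which linear functionals achieve the max, and these are stable under small perturbation, so the minimisation process only moves vertex-images by a small amount, again bounded by a quantity $\to 0$. Composing the two bounded adjustments gives $d_\infty(f^0, f_0) < \e$ on a suitable $U$.

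The main obstacle I anticipate is the passage $g_0 \rightsquigarrow f_0$ to minimality while keeping $d_\infty$ control: the definition of minimal optimal map (Definition~\ref{defnmo}, deferred to the definitions section) forces the tension graph to support enough legal loops, and in principle the minimisation could move some vertex a macroscopic distance. The way around this is the openness of the ``large tension graph'' condition together with the fact that at $X$ the given $f$ is \emph{already} optimal (so its tension graph is the relevant canonical object), hence for $X_0$ near $X$ the only modifications needed are at vertices whose displacement contribution is already within $O(\mathrm{dist}(X_0,X))$ of optimal; one then invokes a local Lipschitz/continuity estimate for the ``optimise and minimise'' operation, which is standard in this setting (cf.\ the folding-path constructions of \cite{FM13,FM18I}). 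Formally I would phrase the final step as: choose $U$ small enough that (a) the tension graph of any optimal map on $X_0 \in U$ contains the tension graph of $f$, (b) the optimisation adjustment from $f^0$ has $d_\infty < \e/2$, and (c) the minimisation adjustment has $d_\infty < \e/2$ and simultaneously the distance-distortion bound holds; then set $f_0$ to be the resulting minimal optimal map.
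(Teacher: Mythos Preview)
Your overall strategy matches the paper's: both proofs first replace $f^0$ by a nearby optimal map and then by a nearby minimal optimal map, controlling $d_\infty$ at each step, and both handle the second displayed inequality by direct continuity.

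There is, however, a genuine gap in your argument for the minimisation step. You write that ``near $X$ the tension graph of $g_0$ is already (close to) all of $X$,'' but Lemma~\ref{L3} does \emph{not} assume that $\phi$ is irreducible or that $X$ is minimally displaced, so the tension graph of $f$ need not be the whole of $X$ (that conclusion is Lemma~\ref{wasfootnote}, which requires both extra hypotheses). Stability under perturbation gives at best that the tension graph of $g_0$ \emph{contains} that of $f$; it says nothing about edges outside the original tension graph, and there is no a priori reason the minimisation procedure cannot move vertex-images macroscopically there. Your condition (a) in the final paragraph is therefore both unproven and, even if granted, insufficient for the conclusion you want.

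The paper avoids this entirely by invoking the quantitative estimates of \cite[Theorems~3.15 and~3.24]{FM18I} rather than arguing via the tension graph. Theorem~3.15 produces a weakly optimal map $f_1$ with
\[
d_\infty(f_1,\PL(f^0)) \le \vol(X_0)\big(\Lip(\PL(f^0))-\lambda_\phi(X_0)\big),
\]
and the right-hand side tends to $0$ as $X_0\to X$ precisely because $f$ is optimal at $X$ (so $\Lip(f)=\lambda_\phi(X)$, while both $\Lip(\PL(f^0))$ and $\lambda_\phi$ vary continuously). Theorems~3.15 and~3.24 together then furnish a minimal optimal $f_0$ with $d_\infty(f_0,f_1)<2\e$. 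Neither bound appeals to any structural property of the tension graph. Your proposal becomes correct once these black-box estimates replace the tension-graph heuristic; without them the $d_\infty$ control in the ``optimise and minimise'' passage is unjustified.
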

\begin{proof}
	The function $\lambda_\phi(X)$ is continuous on $X$ and, tautologically, the metric of
	$X$  changes continuously on $X$.  Therefore, for any $\e>0$ there is a
	neighbourhood $U$ of $X$ in $\Delta_X$ such that
	\begin{itemize}
		\item $|\lambda_\phi(X_0)-\lambda_\phi(X)|<\e$,
		\item $d_\infty(\PL(f^0),f^0)<\e$,
		\item $|\Lip(\PL(f^0))-\Lip(f)|<\e$.
	\end{itemize}   
	
	By~\cite[Theorem~3.15]{FM18I}  there exists a weakly optimal map $f_1:X_0\to X_0$ representing
	$\phi$ such that $$d_\infty(f_1,\PL(f^0))\leq \vol(X_0)(\Lip(\PL(f^0))-\lambda_\phi(X_0))$$
	and, by~\cite[Theorem~3.15 and Theorem~3.24]{FM18I} there exists a minimal optimal map
	$f_0:X_0\to X_0$ representing $\phi$ such that
	$$d_\infty(f_1,f_0)<2\e.$$
	Putting together all such inequalities, and since $\e$ is arbitrary, we get that for any $\e>0$
	there is $U$ so that for all $X_0\in U$ we have $$d_\infty(f^0,f_0)<\e.$$
	
	Moreover, it is clear that we can choose  $U$ in such a way that for any $x,y\in X$ we have $$|d_X(f(x),f(y))-d_{X_0}(f^0(x),f^0(y)|<\e.$$ 
\end{proof}
%{\color{red}[Comment: I changed the order and I added two lemmas from Stefano's document, because I used them to prove the Corollary \ref{FoldingCandidateRegular2}]}

\begin{lem}\label{L4}
	Let $[\phi]\in \Out(\G)$.  For any $X\in \O(\G)$ and optimal map $f:X\to X$ representing
	$\phi$, there is a neighbourhood $U$ of $X$ in $\Delta_X$ such that for any $X_0\in U$ there is a minimal optimal map $f_0:X_0\to X_0$ such that if  $\tau$ is a non-free turn in $X$ which is $f$-legal, then $\tau$ is $f_0$-legal. 
\end{lem}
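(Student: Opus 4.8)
The plan is to keep the neighbourhood $U$ and the minimal optimal map $f_0\colon X_0\to X_0$ furnished by Lemma~\ref{L3} (with $\e>0$ to be chosen small), and to check that, after possibly shrinking $U$, a non-free $f$-legal turn of $X$ cannot become $f_0$-illegal. I will read the whole argument in the Bass--Serre trees $\wt X,\wt X_0$, i.e.\ the same simplicial tree carrying the two metrics, where reduced paths are geodesics, and I will use that the optimal maps $f=f^0$ and $f_0$ are tight on edges, so that the image of an edge is the geodesic between the images of its endpoints. Lifting the two conclusions of Lemma~\ref{L3} to equivariant maps $\wt f=\wt{f^0}$ and $\wt f_0$, I have $d_\infty(\wt f,\wt f_0)<\e$ and $\big|d_{\wt X}(\wt f(\wt p),\wt f(\wt q))-d_{\wt X_0}(\wt f(\wt p),\wt f(\wt q))\big|<\e$ for all vertices $\wt p,\wt q$; together these give
\[
\big|d_{\wt X_0}(\wt f_0(\wt p),\wt f_0(\wt q))-d_{\wt X}(\wt f(\wt p),\wt f(\wt q))\big|<3\e .
\]

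Fix a non-free $f$-legal turn $\tau$ at a vertex $v$. Being legal it is in particular non-trivial, so it lifts to a pair of \emph{distinct} edges $\wt e_1,\wt e_2$ emanating from a lift $\wt v$ (this is insensitive to whether $\tau$ is degenerate); let $\wt p_i$ be the endpoint of $\wt e_i$ other than $\wt v$. Since $\tau$ is $f$-legal, $f$ collapses neither $e_1$ nor $e_2$ and $\wt f(\wt e_1),\wt f(\wt e_2)$ leave $\wt f(\wt v)$ along different edges, hence
\[
d_{\wt X}\big(\wt f(\wt p_1),\wt f(\wt p_2)\big)=d_{\wt X}\big(\wt f(\wt v),\wt f(\wt p_1)\big)+d_{\wt X}\big(\wt f(\wt v),\wt f(\wt p_2)\big),
\]
and each summand is $\ge m$, where $m>0$ is a lower bound for the edge lengths of $X$ that we may take uniform over $U$. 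Now suppose $\tau$ is \emph{not} $f_0$-legal. By definition either $f_0$ collapses one of $e_1,e_2$, in which case $d_{\wt X_0}(\wt f_0(\wt v),\wt f_0(\wt p_i))=0$ for some $i$, contradicting $d_{\wt X}(\wt f(\wt v),\wt f(\wt p_i))\ge m$ through the $3\e$-estimate once $3\e<m$; or $\wt f_0(\wt e_1),\wt f_0(\wt e_2)$ leave $\wt f_0(\wt v)$ along a common edge, whose length is $\ge m$, so that
\[
d_{\wt X_0}\big(\wt f_0(\wt p_1),\wt f_0(\wt p_2)\big)\le d_{\wt X_0}\big(\wt f_0(\wt v),\wt f_0(\wt p_1)\big)+d_{\wt X_0}\big(\wt f_0(\wt v),\wt f_0(\wt p_2)\big)-2m .
\]
Combining this with the previous display through the $3\e$-estimate applied to the three pairs $(\wt p_1,\wt p_2)$, $(\wt v,\wt p_1)$, $(\wt v,\wt p_2)$ forces $2m<C\e$ for a universal constant $C$, which is impossible once $U$ is shrunk so that $C\e<2m$. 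None of the bounds depends on $\tau$, so a single $U$ works for all non-free $f$-legal turns at once, and the lemma follows.

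The step I expect to carry the content is exactly this last comparison. Legality of a turn is an infinitesimal condition, while Lemma~\ref{L3} only controls distances additively, so $d_\infty$-closeness of $f_0$ to $f$ does not by itself transport legality; the decisive input is tightness of the optimal map $f_0$, which upgrades $f_0$-illegality of $\tau$ into a cancellation of \emph{definite} size $2m$ inside the $f_0$-image of the path joining $p_1$ to $p_2$ through $v$, and such a definite-size defect cannot be created by an $\e$-small perturbation. The only bookkeeping one has to be careful about is uniformity: the edge-length bound $m$ must be taken uniform over $U$ rather than pointwise, so that one neighbourhood suffices for every turn simultaneously.
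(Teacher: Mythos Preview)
Your approach follows the same outline as the paper's: invoke Lemma~\ref{L3} and argue that the resulting estimates prevent an $f$-legal non-free turn from becoming $f_0$-illegal. The overall strategy is sound, but there is a genuine quantitative gap.

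You assert that each summand $d_{\wt X}(\wt f(\wt v),\wt f(\wt p_i))$ is at least $m$, the minimum edge length of $X$. But this quantity equals $L_X(f(e_i))$, the length of the $f$-image of the edge $e_i$. An optimal map is only straight; it need not send free vertices to vertices, so $f(e_i)$ need not be a full edge-path and its length can be strictly smaller than $m$. The same issue recurs in your illegal-case inequality $B\le b_1+b_2-2m$: the overlap of $\wt f_0(\wt e_1)$ and $\wt f_0(\wt e_2)$ is the Gromov product $(f_0(\wt p_1)\mid f_0(\wt p_2))_{f_0(\wt v)}$, which is bounded below by $\min(b_1,b_2,L_{X_0}(E))$, not by $L_{X_0}(E)$ alone. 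If some $b_i$ is smaller than the length of the common first edge $E$, the claimed $2m$-cancellation simply fails.

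The repair is easy but necessary. Replace $m$ by $\delta:=\min\{L_X(f(e)):e\text{ not collapsed by }f\}$, a fixed positive constant since there are finitely many edge orbits; then your $3\e$-estimates give that the overlap $s$ satisfies $s<\tfrac{9}{2}\e$ while $s\ge\min(b_1,b_2,m)$ and $b_i>a_i-3\e\ge\delta-3\e$, yielding the contradiction for $\e$ small. Alternatively, and this is how the paper proceeds, note explicitly that equivariance forces $\wt f(\wt v)=\wt f_0(\wt v)$ (the unique point fixed by $G_v$); then $d_{X_0}(\wt f(\wt p_i),\wt f_0(\wt p_i))<\e$ directly shows that $\wt f_0(\wt p_i)$ lies on the same side of this common vertex as $\wt f(\wt p_i)$ once $\e<\delta$ (up to the metric comparison), so the germ of $\wt f_0(\wt e_i)$ agrees with that of $\wt f(\wt e_i)$ and legality persists.
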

\begin{proof}
	We apply Lemma~\ref{L3}. By equivariance, if $v$ is the
	non-free vertex where $\tau$ is based,  then $f(v)=f_0(v)$ is a non-free vertex. 
        Estimates of Lemma~\ref{L3} now easily imply that if $\tau$ is legal in $X$, it remains legal for small perturbations.
\end{proof}

\begin{lem}\label{Cylinder} Let $[\phi]\in\Out(\G)$, $\Delta$ be a simplex in $\O(\G)$, 
  and  $X\in\Delta$ be a point which is minimally displaced by $\phi$.
Suppose that $\Delta '$ is a simplex with face $\Delta$ and that there is a point $Y \in \Delta'$ which is minimally displaced by $\phi$. Then for any open neighborhood $U$ of $X$ in $\Delta ' $ there is a point $Z$ in $U$ which is minimally displaced by $\phi$.
\end{lem}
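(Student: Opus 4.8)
\emph{Proof strategy.} The plan is to join $X$ to $Y$ by the Euclidean segment inside the closed simplex $\overline{\Delta'}$ (in linear edge-length coordinates) and to show that $\lambda_\phi$ is constantly equal to $\lambda(\phi)$ along it; the points of this segment close to $X$ then provide the required $Z$. To set things up: since $\Delta$ is a face of $\Delta'$, the underlying graph of groups of $\Delta'$ collapses onto that of $\Delta$, and $X$ sits inside $\overline{\Delta'}$ as the point whose coordinates on the collapsed edges are $0$, all other coordinates being positive, whereas $Y\in\Delta'$ has all coordinates positive. For $s\in[0,1]$ I set $Z_s=(1-s)X+sY$, so that $Z_0=X$ and $Z_1=Y$. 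For every $s\in(0,1)$ all coordinates of $Z_s$ are then strictly positive, hence $Z_s$ lies in the open simplex $\Delta'$. (If one prefers to remain in $\O_1(\G)$ one simply rescales each $Z_s$ to volume one; this changes nothing below because $\lambda_\phi$ is scale-invariant.)

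The heart of the argument is the estimate $\lambda_\phi(Z_s)\le\lambda(\phi)$ for all $s\in[0,1]$. I would use the standard description of the displacement as a supremum of length ratios: for $Z\in\O(\G)$ one has $\lambda_\phi(Z)=\sup_{[g]}L_{\phi Z}(g)/L_Z(g)$, the supremum over hyperbolic conjugacy classes $[g]$. Fix such a $[g]$. On the open simplex $\Delta'$ the underlying graph of groups, and hence the cyclically reduced representative of $[g]$ in it, does not depend on the metric; consequently $s\mapsto L_{Z_s}(g)$ is affine on $(0,1)$, and, being continuous on $[0,1]$, it is affine on $[0,1]$ and stays strictly positive there. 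The same applies to $s\mapsto L_{\phi Z_s}(g)$ (which equals $L_{Z_s}(g')$ for the fixed conjugacy class $g'$ obtained from $g$ through the automorphism defining the twisted action). Therefore $s\mapsto L_{\phi Z_s}(g)/L_{Z_s}(g)$ is a quotient of two positive affine functions on $[0,1]$, hence monotone on $[0,1]$. At the two endpoints its value is $\le\lambda_\phi(X)=\lambda(\phi)$ and $\le\lambda_\phi(Y)=\lambda(\phi)$ (the trivial inequality that the displacement dominates every length ratio), and a monotone function bounded by $\lambda(\phi)$ at both ends of $[0,1]$ is $\le\lambda(\phi)$ throughout. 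Taking the supremum over $[g]$ yields $\lambda_\phi(Z_s)\le\lambda(\phi)$ on all of $[0,1]$.

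Since $\lambda_\phi\ge\lambda(\phi)$ everywhere, we conclude $\lambda_\phi(Z_s)=\lambda(\phi)$, i.e. $Z_s\in\Min(\phi)$, for every $s\in[0,1]$, and in particular $Z_s\in\Min(\phi)\cap\Delta'$ for $s\in(0,1)$. As $Z_s\to X$ when $s\to 0^+$, any open neighbourhood $U$ of $X$ in $\Delta'$ contains $Z_s$ for all sufficiently small $s>0$, and such a $Z_s$ is the desired point $Z$. I do not expect a real obstacle; the two points that need (routine) care are precisely the ones used above: that the open segment genuinely lies in the open simplex $\Delta'$, and that the length functions $L_{Z_s}(g)$ are affine up to and including the face point $X$. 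The latter follows from continuity of translation length together with the constancy of combinatorics on $\Delta'$; alternatively one checks directly that equivariantly collapsing a subforest of a tree never creates backtracking, so that the edge-crossing numbers of the cyclically reduced representative of $[g]$ are unchanged under the collapse, which makes the affine formula valid at $X$ too.
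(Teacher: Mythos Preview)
Your proof is correct and follows essentially the same route as the paper: both take the Euclidean segment $\overline{YX}$ inside $\overline{\Delta'}$ and argue that every point on it is minimally displaced, so points near $X$ furnish the required $Z$. The paper simply invokes the quasi-convexity of $\lambda_\phi$ along simplicial segments by citing \cite[Lemma~6.2]{FM18I}, whereas you reprove exactly the special case needed: each length ratio $L_{\phi Z_s}(g)/L_{Z_s}(g)$ is a quotient of two positive affine functions of $s$, hence monotone, hence bounded above by the maximum of its endpoint values, which is $\le\lambda(\phi)$; taking the supremum over $g$ gives the conclusion. Your care about affinity persisting at the face point $s=0$ (via the observation that collapsing a subforest creates no backtracking, so the edge-crossing counts computed in $\Delta'$ still compute $L_X(g)$) is exactly the content needed to make the cited convexity lemma apply across a face, and is handled correctly.
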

\begin{proof}
This is an immediate application of the convexity properties of $\lambda_{\phi}$ (namely, by ~\cite[Lemma~6.2]{FM18I}). More specifically, for $X,Y$ as above, the linear segment $\overline{YX}$ eventually enters in $U$, by continuity. On the other hand, by convexity properties of $\lambda_\phi$, any point of the segment $\overline{YX}$ is minimally displaced by $\phi$, which gives us the required result.
\end{proof}

\begin{prop}\label{FoldingCandidateRegular2} Let $[\phi]\in\Out(\G)$ be irreducible and with
  $\lambda(\phi)>1$. 
  Let $\Delta$ be a simplex of $\O(\G)$, and let $X \in \Delta$.
  Let $f:X\to X$ be a minimal optimal map representing $\phi$. Let $\tau$ be a non-degenerate, candidate
  regular turn with respect to $\Delta$, which is also $f$-legal and let $\Delta_{\tau}$ be the
  simplex obtained by folding $\tau$\footnote{
  	%Note that as $\tau$ is legal then it is
    %non-degenerate. 
    Note also that since $\tau$ is regular, it is in particular infinite non
    free, so $\Delta_\tau$ is different from $\Delta$. 
    %WE MAY WANT TO REMOVE THAT FOOTNOTE IN
    %THE FINAL VERSION
}. Then for
  all $Y\in\Delta_\tau$ we have $$\lambda_\phi(Y) >  \lambda(\phi).$$
\end{prop}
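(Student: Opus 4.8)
The plan is to upgrade the non-strict inequality of Corollary~\ref{FoldingCandidateRegular1} to a strict one by exploiting that $\lambda(\phi)>1$ and that $\tau$ is $f$-legal. The subtlety is that although Corollary~\ref{C2} already gives strict inequality $\lambda_\phi(X^t)>\lambda_\phi(X)$ for the points $X^t$ of $\Delta_\tau$ obtained by folding $\tau$ \emph{starting from our chosen minimal optimal point $X$}, an arbitrary $Y\in\Delta_\tau$ is obtained by folding from $\unf_\tau(Y)$, which need not be $X$; we only know $\lambda_\phi(\unf_\tau(Y))\ge\lambda(\phi)$, and at such a point the map $f$ read there may not be optimal, nor need $\tau$ be legal for the minimal optimal map there. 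So the first step is to reduce to a neighbourhood. Since $X\in\Min(\phi)$, if the conclusion failed there would be $Y\in\Delta_\tau$ with $\lambda_\phi(Y)=\lambda(\phi)$, i.e.\ $Y\in\Min(\phi)\cap\Delta_\tau$; by Lemma~\ref{Cylinder} applied to the face $\Delta\subset\Delta_\tau$ (using that $\tau$ is regular, hence infinite non-free, so $\Delta$ is genuinely a codimension-one face of $\Delta_\tau$), every neighbourhood $U$ of $X$ in $\Delta_\tau$ contains a minimally displaced point $Z$.

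Next I would set up the local picture using Lemma~\ref{L3} and Lemma~\ref{L4}: choose a neighbourhood $U$ of $X$ in $\Delta_X$ (and correspondingly in $\Delta_\tau$, via the unfolding coordinates of Lemma~\ref{unf}) so small that for every $X_0\in U$ there is a minimal optimal map $f_0:X_0\to X_0$ that $C^0$-approximates $f$ read in $X_0$, and so that the $f$-legal non-free turn $\tau$ remains $f_0$-legal. Now take the point $Z\in U\cap\Min(\phi)$ produced above, let $X_0=\unf_\tau(Z)\in U\cap\Delta_X$, and let $f_0:X_0\to X_0$ be the corresponding minimal optimal map, for which $\tau$ is still legal. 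If in addition we can arrange that $\tau$ is in the tension graph of $f_0$ — which holds because $\phi$ is irreducible and $f_0$ is optimal at (a point near) a minimally displaced point, so by Remark~\ref{RB3}/Lemma~\ref{wasfootnote} the tension graph of $f_0$ is all of $X_0$ — then Corollary~\ref{C2} applies to $f_0$ and the regular $f_0$-legal turn $\tau$: since $\lambda(\phi)>1$, folding $\tau$ from $X_0$ strictly increases displacement, so $\lambda_\phi(Z)>\lambda_\phi(X_0)\ge\lambda(\phi)$, contradicting $Z\in\Min(\phi)$.

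I expect the main obstacle to be the bookkeeping that makes the previous paragraph rigorous: one must ensure simultaneously that $Z$ can be taken arbitrarily close to $X$ (so that $X_0=\unf_\tau(Z)$ lies in the good neighbourhood $U$ on which Lemmas~\ref{L3} and~\ref{L4} hold), that the minimal optimal map $f_0$ supplied by those lemmas really has $\tau$ legal \emph{and} in its tension graph, and that the unfolding coordinates behave continuously so that "$Z$ near $X$" translates to "$\unf_\tau(Z)$ near $X$". All of these are continuity/perturbation statements already packaged in Lemmas~\ref{L3}, \ref{L4}, \ref{unf}, \ref{Cylinder} and Remark~\ref{RB3}, so the argument is really an assembly of these facts around the strict case of Corollary~\ref{C2}; the only genuinely new input is the reduction via Lemma~\ref{Cylinder} to points near $X$, where optimality and legality of $f_0$ are under control.
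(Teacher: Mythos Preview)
Your plan is essentially the paper's argument: reduce via Lemma~\ref{Cylinder} to an arbitrarily small neighbourhood of $X$ in $\Delta_\tau$, use Lemma~\ref{L4} to propagate $f$-legality of the non-free turn $\tau$ to a minimal optimal map $f_0$ at each nearby base point, and then invoke Corollary~\ref{C2} to see that folding $\tau$ cannot produce a minimally displaced point. The paper organises this by parametrising the neighbourhood as $U^T=\{Z^t: Z\in U\subset\Delta,\ t<T\}$ rather than via $\unf_\tau$, but this is cosmetic.

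There is, however, one genuine soft spot in your sketch. You claim the tension graph of $f_0:X_0\to X_0$ is all of $X_0$ ``because $\phi$ is irreducible and $f_0$ is optimal at (a point near) a minimally displaced point''. Lemma~\ref{wasfootnote} requires $X_0$ to \emph{be} minimally displaced, not merely close to one; at a nearby point with $\lambda_\phi(X_0)>\lambda(\phi)$ the tension graph of a minimal optimal map can perfectly well be a proper subgraph, so the strict clause of Corollary~\ref{C2} is not available there. The paper closes this gap with an explicit dichotomy on the base point $Z\in U\subset\Delta$: if $Z\notin\Min(\phi)$ then already $\lambda_\phi(Z)>\lambda(\phi)$ and the \emph{non}-strict part of Corollary~\ref{C2} gives $\lambda_\phi(Z^t)\ge\lambda_\phi(Z)>\lambda(\phi)$; if $Z\in\Min(\phi)$ then Lemma~\ref{wasfootnote} legitimately forces the tension graph of $f_Z$ to be all of $Z$, and the strict part of Corollary~\ref{C2} yields $\lambda_\phi(Z^t)>\lambda_\phi(Z)=\lambda(\phi)$. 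In your coordinates this is exactly the same case split on $X_0=\unf_\tau(Z)$; once you insert it, your argument goes through.

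A smaller point: you open with ``Since $X\in\Min(\phi)$'', which is not among the hypotheses. The paper first disposes of the case $\lambda_\phi(\Delta)>\lambda(\phi)$ using Corollary~\ref{FoldingCandidateRegular1}, and only then works in the situation $\lambda_\phi(\Delta)=\lambda(\phi)$; you should do the same before invoking Lemma~\ref{Cylinder}.
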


\begin{proof} From Corollary~\ref{FoldingCandidateRegular1} we know
  $\lambda_\phi(\Delta_\tau)\geq\lambda_\phi(\Delta)$, and if
  $\lambda_\phi(\Delta)>\lambda(\phi)$ the claim follows. Thus we may assume $\lambda_\phi(\Delta)=\lambda(\phi)$.
  
  For any $Z\in\Delta$ denote by $Z^t$ the point of $\Delta_\tau$ obtained from $Z$ by folding
  $\tau$ by an amount of $t$ (which is well defined, for any $Z$, for small enough $t$).
	
We will prove that there is an open neighbourhood $U$ of $X$ in $\Delta$ and 
$T>0$ (which depends only on $\phi$ and $X$) so that for all $Z \in U$ and $t<T$, the
point $Z^t$, is not minimally displaced. Then the result follows, as $U^T = \{ Z^t : Z\in U,
t<T \} $ is  an open neighbourhood of $X$ in $\Delta_{\tau}$, and by
Lemma~\ref{Cylinder}, if $\Delta_{\tau}$ were to contain a minimally displaced point, we would
be able to find a minimally displaced point in $U^T$, leading to a contradiction. Whence $\lambda_{\phi} (Y) >\lambda(\phi)$ for all $Y \in \Delta_{\tau}$.

We prove now our claim.  Since $\tau$ is candidate regular, in particular it is non-free. 
  By  Lemma~\ref{L4} there is a neighbourhood $U$ of $X$ in $\Delta$ so that for any
  point $Z \in U$, there is a minimal optimal map $f_Z: Z \to Z$, such that $\tau$ is
  $f_Z$-legal.  Clearly, for any such $U$ there is $T>0$ so that $Z^t$ is well defined for all $Z\in U$ and $t<T$.
By Corollary~\ref{C2} $\lambda_{\phi} (Z^t) \geq \lambda_{\phi} (Z)$, and if
$Z$ is not minimally displaced, the result follows.

  So, suppose that $Z \in \Min(\phi)$. In this case, since $f_Z$ is an optimal map
  representing $\phi$, and since $\phi$ is irreducible, then the tension graph of $f_Z$ is the
  whole $Z$ (Lemma\ref{wasfootnote}),
  and since $\lambda(\phi)>1$, Corollary~\ref{C2} applies in its strict inequality version.
In any case, $Z^t$ cannot be minimally displaced and our claim follows.
\end{proof}

\begin{rem}
  If one is interested in a version of Proposition~\ref{FoldingCandidateRegular2} for reducible
  automorphisms, one has just to add the hypothesis that $\tau$ is {\em stably} in the tension
  graph, that is to say, that $\tau$ is in the tension graph of any $f_Z$ for $Z$ close enough
  to $X$. This will be enough to apply Corollary~\ref{C2} as we did in the proof for
  irreducible automorphisms. 
\end{rem}

\begin{lem}\label{criticallemma}
  Let $[\phi]\in\Out(\G)$ be an irreducible element with $\lambda(\phi)>1$.
  Suppose that
  $X_1,X_2\in\Delta$ are two points minimally displaced by $\phi$ and let $f_1,f_2$ be train track
  representative of $\phi$ on $X_1$ and $X_2$ respectively.
  
  Suppose that $\tau$ is candidate regular turn. Then $\tau$ is $f_1$-legal in $X_1$ if and only
  if it is  $f_2$-legal in $X_2$.
  \end{lem}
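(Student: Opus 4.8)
The plan is to argue by contradiction, playing off two opposing facts established above: folding a candidate regular \emph{legal} turn pushes the displacement strictly above $\lambda(\phi)$ on all of $\Delta_\tau$ (Proposition~\ref{FoldingCandidateRegular2}), while folding an \emph{illegal} turn of an optimal map at a minimally displaced point keeps one inside $\Min(\phi)$ (the folding path, \cite{FM13,FM18I}). By symmetry of the statement in the indices, it is enough to show that $f_1$-legality of $\tau$ in $X_1$ implies $f_2$-legality of $\tau$ in $X_2$; so I would assume $\tau$ is $f_1$-legal but, for contradiction, $f_2$-illegal.

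First I would dispose of the degenerate case. If $\tau=[e,ge]$ with $g\neq 1$, then, since $\phi$ is irreducible with $\lambda(\phi)>1$ and $X_1,X_2\in\Min(\phi)$, the tension graph of each $f_i$ is all of $X_i$ (Remark~\ref{RB3}), so $f_i$ stretches every edge and does not collapse $e$; hence $f_i$ sends $\tau$ to the non-trivial turn cut out by the germs of $f_i(e)$ and $\phi(g)f_i(e)$, so $\tau$ is automatically $f_i$-legal for $i=1,2$ and there is nothing to prove. Thus I would henceforth take $\tau$ non-degenerate.

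On the $X_1$-side, $\tau$ is a non-free $f_1$-legal turn and $f_1$ is an optimal map at $X_1\in\Delta$, so Lemma~\ref{L4} supplies a point $X_0\in\Delta$ carrying a \emph{minimal} optimal map $f_0\colon X_0\to X_0$ representing $\phi$ for which $\tau$ is $f_0$-legal; since $\tau$ is also non-degenerate, candidate regular with respect to $\Delta$, and $\lambda(\phi)>1$, Proposition~\ref{FoldingCandidateRegular2} then yields $\lambda_\phi(Y)>\lambda(\phi)$ for every $Y\in\Delta_\tau$. On the $X_2$-side, since the tension graph of $f_2$ is all of $X_2$ no edge is collapsed by $f_2$, so ``$\tau$ is $f_2$-illegal'' means precisely that $f_2$ identifies the two initial germs of $\tau$; folding $\tau$ at $X_2$ therefore starts a folding path inside $\Min(\phi)$, and for all small $t>0$ the folded graph $X_2^t$ lies in the same open simplex $\Delta_\tau$ (one new edge of positive length, the two shortened edges still positive) with $\lambda_\phi(X_2^t)=\lambda(\phi)$. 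This contradicts the previous paragraph at the point $X_2^t\in\Delta_\tau$. Hence $\tau$ is $f_2$-legal, and by symmetry the equivalence follows.

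The step I expect to need the most care is reconciling hypotheses: Proposition~\ref{FoldingCandidateRegular2} is stated for a minimal optimal map, whereas our input is a train track map and the literal assumption is $f_1$-legality, so I would route through Lemma~\ref{L4} to upgrade $f_1$-legality of the (non-free) turn $\tau$ to $f_0$-legality for an honest minimal optimal map --- harmlessly at a possibly different point of $\Delta$, since the conclusion of Proposition~\ref{FoldingCandidateRegular2} concerns all of $\Delta_\tau$, which depends only on the common underlying graph of $\Delta$ and on $\tau$. A second point to watch, on the $X_2$-side, is that exponential growth together with irreducibility forces the tension graph to be the whole graph, which is what makes ``illegal'' unambiguously mean ``two germs collapse'' and thus makes the folding-path mechanism available.
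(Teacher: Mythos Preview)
Your argument is correct and follows the same contradiction strategy as the paper: if $\tau$ were $f_1$-legal but $f_2$-illegal, Proposition~\ref{FoldingCandidateRegular2} forces $\lambda_\phi>\lambda(\phi)$ throughout $\Delta_\tau$, while folding the $f_2$-illegal turn at $X_2$ produces a minimally displaced point in $\Delta_\tau$ (\cite[Theorem~8.23]{FM13}), a contradiction. The one inefficiency is your detour through Lemma~\ref{L4}: since $X_1\in\Min(\phi)$ and $\phi$ is irreducible, Lemma~\ref{wasfootnote} already tells you that the train track $f_1$ \emph{is} a minimal optimal map, so Proposition~\ref{FoldingCandidateRegular2} applies directly to $f_1$ without passing to an auxiliary $f_0$.
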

  \begin{proof}
Suppose for contradiction that $\tau$ is $f_1$-legal but
$f_2$-illegal (in particular it is non-degenerate). Let $\Delta_\tau$ be the simplex obtained by folding $\tau$.
Since $\tau$ is candidate regular, we apply
Proposition~\ref{FoldingCandidateRegular2} (we can because train tracks are minimal optimal map
by Lemma~\ref{wasfootnote}), and we
have that $\lambda_{\phi}(Y) > \lambda(\phi)$ for any $Y \in \Delta_{\tau}$. On the other hand,
$\tau$ is $f_2$-illegal, and  $\Min(\phi)$ is invariant under isometrically folding
illegal turns (see \cite[~Theorem 8.23]{FM13}), which means that there is a point $Y \in
\Delta_{\tau}$ which is minimally displaced and that leads us to a contradiction. Clearly we
can switch the roles of $X_1$ and $X_2$, and the proof is complete.
\end{proof}

\begin{defn}[Simplex Critical Turns]\label{defnsc}
	Let $[\phi]\in\Out(\G)$ . For a simplex, $\Delta$, we say that a turn
        $\tau$ is {\bf simplex critical} if it is either candidate critical (see \ref{defncc}) or $f$-illegal for
        some train track  representative of $\phi$ defined on some point of $\Delta$ (if any).
	 A turn is called {\bf simplex regular} if it is not critical. The set of simplex
         critical turns is denoted by $\mathcal C_\Delta$.  Sometimes we will use the short
         notation  {\em $\Delta$-critical} to means {\em simplex critical in $\Delta$}.
\end{defn}

\begin{thm}
	\label{critical} If $[\phi]\in\Out(\G)$ is irreducible with $\lambda (\phi) > 1$, then
	the set $\mathcal{C}_{\Delta}$ is finite. In fact, it is sufficient to add to
        $\mathcal{C}_C(\Delta)$ the illegal turns for a single train track map (if any) to
        obtain the       whole of $\mathcal{C}_{\Delta}$.  
\end{thm}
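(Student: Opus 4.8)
The plan is to pin down $\mathcal C_\Delta$ exactly: it should be $\mathcal C_C(\Delta)$ together with the illegal turns of one fixed train track map (if any exists on $\Delta$), after which finiteness of each of the two pieces is a routine count. The essential input is Lemma~\ref{criticallemma}, which says that for \emph{candidate regular} turns, $f$-legality is independent of the choice of train track point in $\Delta$. First I would record that $\mathcal C_C(\Delta)$ is finite: by Definition~\ref{defncc} a candidate critical turn is free, finite non-free, or an infinite non-free turn crossed by a loop of $\phi(A_\Delta)$; there are finitely many turns at vertices with finite (in particular trivial) vertex group since the underlying graph is finite, and $A_\Delta$ is finite (Remark after Definition~\ref{ADelta}), so $\phi(A_\Delta)$ is a finite set of loops crossing only finitely many turns. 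Also $\mathcal C_C(\Delta)\subseteq\mathcal C_\Delta$ by Definition~\ref{defnsc}.

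Next I would dispose of the illegal turns. If no train track representative of $\phi$ is defined on any point of $\Delta$, then $\mathcal C_\Delta=\mathcal C_C(\Delta)$ and we are done; otherwise fix $X_0\in\Delta$ and a train track map $f_0\colon X_0\to X_0$ representing $\phi$, so $X_0\in\Min(\phi)$, and let $\mathcal I$ be the set of $f_0$-illegal turns. The claim is $\mathcal C_\Delta=\mathcal C_C(\Delta)\cup\mathcal I$, which is precisely the ``in fact'' part of the statement. The inclusion $\supseteq$ is Definition~\ref{defnsc}. For $\subseteq$, take $\tau\in\mathcal C_\Delta\setminus\mathcal C_C(\Delta)$: then $\tau$ is candidate regular and is $f$-illegal for some train track map $f\colon X\to X$ representing $\phi$ with $X\in\Delta$ (hence $X\in\Min(\phi)$). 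Applying Lemma~\ref{criticallemma} to $X$ and $X_0$ (legitimate since $\phi$ is irreducible with $\lambda(\phi)>1$) we conclude that $\tau$ is $f_0$-illegal, i.e. $\tau\in\mathcal I$.

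It remains to show that $\mathcal I$ is finite, and this is the heart of the argument. Free and finite non-free turns already lie in $\mathcal C_C(\Delta)$, so it suffices to bound the infinite non-free turns in $\mathcal I$. Fix a vertex $v$ of $X_0$ with infinite vertex group $G_v$. Since $\phi$ is irreducible and $f_0$ is an optimal map on the minimally displaced point $X_0$, the tension graph of $f_0$ is all of $X_0$ (Lemma~\ref{wasfootnote}); as $\lambda(\phi)>1$, every edge of $X_0$ is stretched by a factor strictly greater than $1$, and in particular $f_0$ sends no germ of an edge to a vertex. Hence for every pair of germs $e_1,e_2$ at $v$ the hypothesis of Lemma~\ref{L0} is satisfied, and that lemma tells us that among the turns $[e_1,ge_2]$, $g\in G_v$, at most one is $f_0$-illegal. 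Since every turn at $v$ has the form $[e_1,ge_2]$ for some pair of germs at $v$ (after normalising via the diagonal $G_v$-action) and $X_0$ has only finitely many edges, there are finitely many $f_0$-illegal turns at $v$; summing over the finitely many vertices, $\mathcal I$ is finite. Combined with the finiteness of $\mathcal C_C(\Delta)$, this shows $\mathcal C_\Delta$ is finite.

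I expect the delicate point to be exactly the exclusion of the ``germ collapses to a vertex'' case in the last paragraph: if some edge of $X_0$ failed to be in the tension graph of $f_0$, then Lemma~\ref{L0} would not apply to turns involving its germ, and one could a priori have infinitely many $f_0$-illegal turns at a single infinite non-free vertex. This is precisely where the hypotheses that $\phi$ is irreducible and $\lambda(\phi)>1$ are needed, via the fact that a train track map for an irreducible automorphism has full tension graph.
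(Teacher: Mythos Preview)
Your proof is correct and follows essentially the same approach as the paper: both reduce to the equality $\mathcal C_\Delta=\mathcal C_C(\Delta)\cup\{f_0\text{-illegal turns}\}$ via Lemma~\ref{criticallemma}, and then invoke Lemma~\ref{L0} for the finiteness of the illegal set. Your version is simply more explicit than the paper's in verifying the hypothesis of Lemma~\ref{L0} (no germ is collapsed, because the tension graph is full and $\lambda(\phi)>1$), a point the paper leaves tacit.
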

\begin{proof}
The set $\mathcal C_C(\Delta)$ is clearly finite by construction (see Definition~\ref{defncc}), and if there is no minimal
displaced point in $\Delta$, we have nothing to prove. Otherwise, chose any train track
representative of $\phi$, $f$, defined on a point of $\Delta$. $f$-illegal turns are finitely
many (Lemma~\ref{L0}) and 
Lemma~\ref{criticallemma}
tells us that
$\mathcal{C}_{\Delta} = \mathcal{C}_C(\Delta) \cup \{\tau : \tau $ is an $f$-illegal turn$\}$ and, in particular, it is finite.
\end{proof}

\begin{rem}

It is worth mentioning that simplex regular turns can be effectively detected, having a train
track map $f$ in hand. Namely, suppose
that a turn $\tau$ is
\begin{enumerate}
\item not a free turn nor a turn with finite vertex group; and
\item not the $f$-image of an edge; and
\item not the $f$-image of a free turn; and
\item not the $f$-image of a turn involving group elements in $H$; and
\item not $f$-illegal;
\end{enumerate}
then $\tau$ is simplex regular. In particular, Proposition~\ref{FoldingCandidateRegular2} tells
us that if we have $X\in\Min(\phi)$ and we want to
find all neighbours of $X$ obtained from $X$ by a single turn-fold, and  which still are in
$\Min(\phi)$, then we only need to check turns in the finite complement of the above effective list,
namely turns that are either
\begin{enumerate}
\item free or with finite vertex group; or
\item in the $f$-image of an edge; or
\item the $f$-image of a free turn; or
\item the $f$-image of a turn involving group elements in $H$; or
\item $f$-illegal.
\end{enumerate}

\end{rem}

\section{Folding and unfolding collapsed forests}
Here we extend the unfolding construction of Section~\ref{s3} to the general case of two
simplices, one face of the other. We remind that we always understand that a
straight map between elements of $\O(\G)$ is an $\O$-map (i.e. $G$-equivariant at level of trees).

A straight map $p:X\to Y$, defines on $X$ a simplicial structure $\sigma_p$, by pulling back
that of $Y$. With respect to $\sigma_p$, the map $p$ is tautologically simplicial. 
We define the {\bf simplicial volume} of $p$, $\operatorname{svol}(p)$ as the number of edges of $\sigma_p$. 

If in addiction $p$ is {\bf locally isometric on edges}, then it defines (some) folding paths $X=X_0,\dots,X_n=Y$
obtained by recursively identifying pairs of edges of $\sigma_p$ having a common vertex and the same
$p$-image. Together with the $X_i$ there are quotient maps $q_i:X_{i-1}\to X_i$ given by the
identification, and maps $p_i:X\to Y$ defined by $p_i(x)=p(q_i^{-1}(x))$. (Note that $p_0=p$ and $p_n=id$).
We refer to any folding path obtained as above as a {\bf folding path directed by $p$}. We
say that $X=X_0,\dots,X_n=Y$ has length $n$. 
\begin{lem}  
	\label{redsimpvol}
	Let $X, Y \in \O(\G)$ and $p:X \to Y $ be a straight map which is locally isometric on
        edges. Then any folding path directed by $p$ has length at most $\operatorname{svol}(p)$.
\end{lem}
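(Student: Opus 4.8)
The plan is to follow the number of edges of the successive pulled-back simplicial structures along the folding path and to show that this number strictly decreases at each step.

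Fix a folding path $X=X_0,\dots,X_n=Y$ directed by $p$, together with the quotient maps $q_i\colon X_{i-1}\to X_i$ and the induced maps $p_i\colon X_i\to Y$ (so $p_0=p$ and $p_n=\mathrm{id}$), exactly as in the construction preceding the statement. Since $p$ is locally isometric on edges, $\sigma_p=\sigma_{p_0}$ makes $p$ simplicial and maps each of its $\operatorname{svol}(p)$ edges isometrically onto an edge of $Y$. I claim that, for every $i$, the map $p_i$ is again locally isometric on edges and $\sigma_{p_i}$ has exactly one fewer edge than $\sigma_{p_{i-1}}$. Granting the claim, an immediate induction gives that $\sigma_{p_i}$ has $\operatorname{svol}(p)-i$ edges; since $\sigma_{p_n}$ is the simplicial structure of $Y$ itself and hence has a non-negative number of edges, we obtain $\operatorname{svol}(p)-n\geq 0$, that is $n\leq\operatorname{svol}(p)$, which is the assertion.

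It remains to justify the claim, and this bookkeeping is the only point requiring care. By construction $q_i$ identifies a pair of distinct edges $e,e'$ of $\sigma_{p_{i-1}}$ sharing an initial vertex and having the same $p_{i-1}$-image; since $p_{i-1}$ restricts to an isometry on each of $e$ and $e'$ onto one and the same edge of $Y$, the map $p_i$ induced on the quotient is well defined, stays in $\O(\G)$ (the folded turn is non-degenerate, so no edge group is created), and is again locally isometric on edges. Moreover, the $p_i$-pull-back of the simplicial structure of $Y$ is precisely the quotient of $\sigma_{p_{i-1}}$ obtained by merging $e$ and $e'$ into a single edge — the terminal vertices of $e$ and $e'$ being identified as well, as they have equal $p_{i-1}$-image. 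Hence $\sigma_{p_i}$ has exactly one edge less than $\sigma_{p_{i-1}}$, as claimed. (Keeping track of the number of edges of $Y$ in this count in fact yields the slightly sharper bound that the length of the folding path is at most $\operatorname{svol}(p)$ minus the number of edges of $Y$.)
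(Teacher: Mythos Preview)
Your proof is correct and follows exactly the same idea as the paper's one-line argument (``at any step the number of edges decreases by one''); you have simply spelled out the bookkeeping that the paper leaves implicit. Your parenthetical sharper bound (that the folding path has length at most $\operatorname{svol}(p)$ minus the number of edges of $Y$) is a pleasant bonus not stated in the paper.
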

\begin{proof}
  At any step the number of edges decreases by one.
\end{proof}

\begin{lem}
	\label{unfold}
	Let $\Delta,\Delta'$ be simplices of $\O(\G)$ so that 
	$\Delta$ is a face of $\Delta'$. For any $Y \in \Delta'$ there is a point,
	$\unf(Y) \in \Delta$ and a  straight map, $p: \unf(Y) \to Y$,  such that:
	\begin{enumerate}
		\item $p$ is a local  isometry on edges;
		\item If $v$ is a non-free vertex in $Y$, then $p^{-1}(v)$ is a single vertex;
		\item $\operatorname{svol}(p)$ is at most $2D(\Delta')^2$, where $D(\Delta')$ is
		the number of edges of $\Delta'$.  
	\end{enumerate}
	Moreover, any folding path directed by $p$ produces maps which still satisfy $(1)$
	and $(2)$.
\end{lem}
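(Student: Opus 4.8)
The plan is to realise the face relation as the collapse of an invariant forest and then to invert that collapse explicitly. Since $\Delta$ is a face of $\Delta'$, the underlying graph $X$ of $\Delta$ is obtained from the underlying graph $X'$ of $\Delta'$ by collapsing an invariant forest $F\subseteq X'$ — equivalently, at the level of trees, by collapsing a $G$-invariant subforest; write $c\colon X'\to X$ for the collapse and, for a component $C$ of $F$, write $[C]$ for the corresponding vertex of $X$. Two features of $F$ are forced by the requirement that $X\in\O(\G)$: each component $C$ contains at most one non-free vertex (otherwise $c$ would merge two of the $G_i$ and change the elliptic elements, landing us in a different deformation space), and $X'$ has no free vertex of valence one, so every leaf of a component $C$ carries either a non-trivial vertex group or an edge of $X'$ outside $F$. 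We fix once and for all, $G$-equivariantly, a base vertex $b_C$ in each component $C$, choosing $b_C$ to be the non-free vertex of $C$ whenever $C$ has one; the choice is equivariant because a component with a non-free vertex is stabilised by the corresponding vertex group, which fixes that vertex, while the remaining components have trivial stabiliser.

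Given $Y\in\Delta'$ we build $\unf(Y)$ and $p$ directly. The edges of $X$ are precisely the edges of $X'$ not in $F$. For such an edge $a$, each endpoint of $a$ lying in a component $C$ of $F$ determines, through the unique reduced $Y$-path $\rho^{C}_a$ in $C$ from $b_C$ to that endpoint, an \emph{attachment path}; there are at most two of them. We declare the $\unf(Y)$-length of $a$ to be $L_Y(a)$ plus the $Y$-lengths of its attachment paths, and give $\unf(Y)$ the marking induced from $Y$ via $c$; all these lengths are positive, so $\unf(Y)$ is a genuine point of the open simplex $\Delta$. We let $p\colon\unf(Y)\to Y$ be the straight map with $p([C])=b_C$ that sends each edge $a$ of $X$ isometrically onto the $Y$-path $\rho^{C}_a\cdot a\cdot\overline{\rho^{C'}_a}$ (omitting an attachment path if the relevant endpoint is not in $F$). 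This image path is reduced, because a reduced path inside a component of $F$ concatenated with an edge $a\notin F$ cannot backtrack, so $p$ is a local isometry on edges, which is $(1)$; and $c\circ p=\operatorname{id}_X$ at the level of graphs, so $p$ realises the trivial change of markings and is a genuine $\O$-map. Surjectivity of $p$ is the one point where the structure of $F$ is used: were some edge $f$ of a component $C$ not in the image, the subtree of $C$ lying on the side of $f$ that does not contain $b_C$ would contain no non-free vertex (we placed that at $b_C$) and no endpoint of an edge of $X'$ outside $F$ (such an endpoint would force its attachment path to cross $f$), hence would contain a free vertex of $X'$ of valence one, a contradiction.

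Property $(2)$ is then immediate. If $v$ is a non-free vertex of $Y$ it is a non-free vertex of $X'$; if $v\notin F$ then $v$ is a vertex of $X$, no attachment path meets $v$ and no edge of $X$ is mapped across $v$, so $p^{-1}(v)=\{v\}$; if $v\in F$ then $v=b_C$ for its component $C$, the vertex $[C]$ maps to $v$, every attachment path at $[C]$ starts at $v$ but immediately leaves it, and everything disjoint from $C$ maps away from $v$, so again $p^{-1}(v)$ is a single vertex. For $(3)$, $\operatorname{svol}(p)$ is the total combinatorial $Y$-length of the images of the edges of $X$: each edge contributes $1$ plus the combinatorial lengths of its at most two attachment paths, each attachment path is a reduced path in $F$ hence has combinatorial length at most $|E(F)|$, and $|E(X)|=|E(X')|-|E(F)|=D(\Delta')-|E(F)|$, so $\operatorname{svol}(p)\le|E(X)|\bigl(1+2|E(F)|\bigr)\le 2D(\Delta')^2$.

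Finally, persistence of $(1)$ and $(2)$ along a folding path directed by $p$ is automatic. A fold identifies two edges of the pullback simplicial structure $\sigma_p$ that share a vertex and have the same $p$-image, so the quotient map is still isometric on edges with reduced images, giving $(1)$; since the two identified edges have equal $p$-image their far endpoints have equal $p$-image, and by $(2)$ for the current map these endpoints must already coincide whenever that image is non-free, so no two distinct preimages of a non-free vertex are ever merged and $(2)$ propagates by induction along the path. The step needing the most care is the interplay, in the surjectivity argument and in $(2)$, between placing the base vertices at the non-free vertices of $F$ and the absence of valence-one free vertices; the remainder is bookkeeping.
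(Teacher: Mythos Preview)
Your proof is correct and follows essentially the same construction as the paper: collapse a forest $F$, root each component at its non-free vertex (if any), and rebuild each edge of $X$ by prepending and appending the reduced paths in $F$ from the root to the actual endpoints. Your write-up is in fact more careful than the paper's in two places: you define the attachment path for \emph{every} endpoint lying in $F$ (the paper only defines $\gamma_y$ for leaves of $F$, which is not quite enough when an edge outside $F$ meets $F$ at an interior vertex of a component), and you give the surjectivity and property~(2) arguments explicitly rather than declaring them immediate. Your bound for (3) is obtained by summing combinatorial lengths of images, $|E(X)|(1+2|E(F)|)$, whereas the paper bounds preimage cardinalities and gets $2D(\Delta)D(\Delta')$; these are equivalent counts of the edges of $\sigma_p$ and both sit comfortably under $2D(\Delta')^2$.

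One small remark on the ``Moreover'' clause: your inductive step for (2) is correct but slightly roundabout. Since $p_i$ factors as $p_i\circ q_i=p_{i-1}$ with $q_i$ surjective, one has $p_i^{-1}(v)=q_i\bigl(p_{i-1}^{-1}(v)\bigr)$, so a singleton preimage stays a singleton automatically; the real content of your observation (that the far endpoints of the two folded edges already coincide when their common image is non-free) is what guarantees the intermediate graph stays in $\O(\G)$ rather than acquiring a vertex with enlarged stabiliser.
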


\begin{proof}
	The underlying graph $X$ of $\Delta$ is obtained by the collapse in $Y$ of a simplicial forest
	$\F=T_0\sqcup\dots\sqcup T_k$ each of whose tree $T_i$ contains at most one non-free
	vertex. We define $\unf(Y)$ by isometrically unfolding each tree.  More precisely, for any
	$T_i$ we choose a root-vertex $w_i$ with the requirement that $w_i$ is the unique
	non-free vertex of $T_i$, if any. For any leaf $y$ of the forest, say $y$ is a leaf of $T_i$,
	there is a unique path $\gamma_y$ connecting $y$ to $w_i$ in $T_i$. For notational convenience
	we define $\gamma_y$ to be the constant path for any other vertex of $Y$.
	
	The metric on $X$ defining the point $\unf(Y)$ is given as follows. Any edge $e$ of $X$ has a
	preimage in $Y$ which is also an edge. We declare $$L_{\unf(Y)}(e)=L_{Y}(\gamma_a)+L_Y(e)+L_Y(\gamma_b)$$ 
	where $a,b$ are the endpoints of the preimage of $e$ in $Y$.
	As an oriented edge, $e$ is therefore the concatenation of three sub-segments $$e=A\cdot
	E\cdot B$$ of lengths $L_{Y}(\gamma_a), L_Y(e),$ and  $L_Y(\gamma_b)$ respectively. The map
	$p$ is now defined by isometrically identifying $A$ with $\gamma_a$, $E$ with the copy of $e$
	in $Y$, and $B$ with $\gamma_b$. The union of all $A$-segments and $B$-segments form a forest
	which can be viewed as an isometric unfolding of $\F$, and out of that forest, $p$ is  basically
	the identity by definition. Conditions $(1)$ and $(2)$ immediately follow. As for $(3)$, it
	suffices to note that for any $y\in Y$, the cardinality of $p^{-1}(y)$ is bounded by the number
	of leaves of $\F$, which is bounded by $2D(\Delta')$. Therefore $\sigma_p$ has at most
	$2D(\Delta)D(\Delta')\leq 2D(\Delta')^2$ edges.  The last claim is easy to verify and we leave it to the reader. 
\end{proof}

\begin{lem}
	\label{cancel} Let $X, Y \in \O(\G)$. 
	Let $p:X \to Y$ be a straight map such that if $v$ is a non-free vertex in $Y$, then $p^{-1}(v)$ is a single vertex in $X$ (condition $(2)$ in Lemma~\ref{unfold}). 
	
	Let $\alpha, \beta$ be paths in $X$, starting at the same vertex, and so that $\bar\alpha\beta$
	is reduced. If $p(\alpha) = p(\beta)$, then the only non-free vertex
	crossed by each of them, if any, is their initial vertex (which is crossed only once).  
\end{lem}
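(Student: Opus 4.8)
The plan is to pass to the Bass--Serre trees, where the statement becomes a statement about geodesics. Write $T_X,T_Y$ for the universal covers; identifying the markings, $p$ lifts to a $G$-equivariant map $\tilde p\colon T_X\to T_Y$, and hypothesis $(2)$ of Lemma~\ref{unfold} reads: for every non-free vertex $v'$ of $T_Y$ the fibre $\tilde p^{-1}(v')$ is a single vertex of $T_X$. Up to subdividing $T_X$ we may assume $\tilde p$ carries each edge onto a single edge (harmless: subdividing $T_X$ creates no new non-free vertices, and for the $p$ produced by Lemma~\ref{unfold}, which is locally isometric on edges, such a subdivision exists; for a general straight $p$ one argues with first/last edges of edge-images instead). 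Lift $\alpha,\beta$ to geodesics $[\tilde v_0,\tilde a]$, $[\tilde v_0,\tilde b]$ based at a common vertex $\tilde v_0$; the hypothesis that $\bar\alpha\beta$ is reduced says exactly that $[\tilde v_0,\tilde a]\cap[\tilde v_0,\tilde b]=\{\tilde v_0\}$, so $\delta:=[\tilde a,\tilde b]=\bar\alpha\cup\beta$. Two preliminary facts: \emph{(a)} $\tilde p$ sends non-free vertices to non-free vertices (a non-trivial elliptic has a unique fixed vertex, and $G_i$-conjugates are maximal elliptic) and, with $(2)$, restricts to a stabiliser-preserving bijection between the non-free vertices of $T_X$ and $T_Y$, with $\tilde p^{-1}$ of a non-free vertex a single non-free vertex containing no edge-interior point; \emph{(b)} from $[p(\alpha)]=[p(\beta)]$ one gets $\tilde p\tilde a=\tilde p\tilde b=:\tilde c$ and $[\tilde p(\alpha)]=[\tilde p(\beta)]=[\tilde p\tilde v_0,\tilde c]=:\tilde\gamma$, so the loop $\tilde p(\delta)$ based at $\tilde c$ has trivial reduced form.

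Now suppose, for contradiction, that $\alpha$ crosses a non-free vertex $\tilde w\neq\tilde v_0$. If $\tilde w=\tilde a$, then $\tilde p\tilde a=\tilde c$ is non-free, so $(2)$ and $\tilde p\tilde a=\tilde p\tilde b$ force $\tilde a=\tilde b$; then $[\tilde v_0,\tilde a]=[\tilde v_0,\tilde b]$, which meets itself only at $\tilde v_0$, so $\tilde a=\tilde b=\tilde v_0$, contradicting $\tilde w\neq\tilde v_0$. Hence $\tilde w$ is interior to $\delta$. Since $\tilde p^{-1}(\tilde p\tilde w)=\{\tilde w\}$ and $\delta$ is a geodesic, the closed walk $\tilde p(\delta)$ visits $\tilde p\tilde w$ exactly once; a closed walk in a tree traverses every edge an even number of times, so a vertex it visits once is a leaf of the traversed subtree whose unique incident edge is traversed twice, i.e.\ the walk backtracks there. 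Translating back, if $E_1,E_2$ are the edges of $\delta$ at $\tilde w$ (incoming, outgoing) then $\tilde p(\overline{E_1})=\tilde p(E_2)$; letting $\tilde z$ be their common terminal vertex and $\tilde u,\tilde u'$ the two $\delta$-neighbours of $\tilde w$, we get $\tilde p\tilde u=\tilde p\tilde u'=\tilde z$ with $\tilde u\neq\tilde u'$. If $\tilde z$ is non-free, $(2)$ forces $\tilde u=\tilde u'$, a contradiction, and we are done.

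The remaining case $\tilde z$ \emph{free} is the crux, and is dealt with by propagating the fold outward along $\delta$: one shows that the ``mirror'' vertices at equal distance from $\tilde w$ on the two sides of $\delta$ have equal $\tilde p$-image — this is just the successive cancellation of $\tilde p(\delta)$, read off in $T_X$ — so the propagation can terminate only upon meeting a non-free vertex, giving the contradiction via $(2)$ exactly as above, or an endpoint of $\delta$, which if non-free again contradicts $(2)$ as in the case $\tilde w=\tilde a$ and if free is excluded using the remaining structure (the valence constraints defining $\O(\G)$, and that $\alpha$ and $\beta$ share only $\tilde v_0$). The clean way to organise this is an induction on $|\alpha|+|\beta|$: splitting $\alpha=\alpha_0\alpha_1$ at $\tilde w$, the configuration $(\tilde w;\alpha_1,\overline{\alpha_0}\beta)$ again satisfies all hypotheses (its folding concatenation is $\bar\alpha\beta$, and $[p(\alpha_1)]=[\overline{p(\alpha_0)}p(\beta)]$ by left-cancellation in the groupoid $\pi(Y)$), and when $\tilde z$ is free one may instead pass to the strictly shorter configuration obtained by folding $\delta$ at $\tilde w$ and re-splitting. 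The same argument with $\alpha,\beta$ interchanged gives the claim for $\beta$, and $\tilde v_0$ is crossed only once because $\delta$ is a geodesic. The hard part is exactly this last paragraph: one must verify that the fold genuinely propagates and terminates in a way that triggers condition $(2)$, and must choose the induction quantity so that it strictly decreases in \emph{every} sub-case — the delicate point being a non-free basepoint, where the naive count of offending vertices is only preserved and one has first to shorten by ``folding off'' the common first edge (legitimate because there $(2)$ forces $p(e_\alpha)=p(e_\beta)$).
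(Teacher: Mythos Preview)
Your argument goes astray precisely where you flag the ``hard part''. The paper's proof is four lines and avoids that difficulty entirely by \emph{truncation}: if $\alpha$ crosses a non-free vertex $w\neq v_0$, truncate $\alpha$ (and $\beta$ correspondingly, so that the hypothesis $p(\alpha)=p(\beta)$ is preserved) so that $w$ becomes the \emph{last} vertex of $\alpha$. Then $p(w)$ is non-free and is the endpoint of $p(\alpha)=p(\beta)$, so the endpoint of $\beta$ lies in $p^{-1}(p(w))=\{w\}$ by condition~(2); hence $\beta$ also ends at $w$. But then $\bar\alpha\beta$ is a reduced loop of positive length based at $w$, i.e.\ a nontrivial element of $\pi_1(X,w)$, whose $p$-image $\overline{p(\alpha)}\,p(\beta)$ is trivial --- contradicting that $X,Y$ lie in the same deformation space. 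In your tree language this is exactly your case $\tilde w=\tilde a$, which you handle correctly; truncation always reduces the general situation to that case, so no propagation or induction is needed.

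Two specific problems with the longer route you attempt. First, in your point~(b) you work only with $[p(\alpha)]=[p(\beta)]$, whereas the lemma's hypothesis is the stronger $p(\alpha)=p(\beta)$ (equality as paths, not merely of reductions). Under the actual hypothesis the walk $\tilde p(\delta)$ is literally $\bar\eta\,\eta$ with $\eta:=\tilde p(\tilde\alpha)=\tilde p(\tilde\beta)$, so the vertex of $\tilde\beta$ at the same position as $\tilde w$ along $\tilde\alpha$ already maps to $\tilde p\tilde w$, and (2) finishes immediately --- the relevant symmetry is about $\tilde v_0$, not about $\tilde w$. Second, your ``mirror vertex'' claim (equal $\tilde p$-images at equal distances from $\tilde w$) does \emph{not} follow from $[\tilde p(\delta)]$ being trivial: a null-homotopic closed walk in a tree may branch after the immediate backtrack at $\tilde p\tilde w$ --- e.g.\ the walk $c,a,\tilde z,\tilde p\tilde w,\tilde z,b,\tilde z,a,c$ with $a\neq b$ reduces to a point but is not mirror-symmetric about $\tilde p\tilde w$. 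Your induction sketch inherits this gap, and the appeal to (2) to force $p(e_\alpha)=p(e_\beta)$ at a non-free basepoint is not justified either (condition~(2) concerns preimages of non-free vertices of $Y$, not images of edges at non-free vertices of $X$).
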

\begin{proof}
	We argue by contradiction assuming that $\alpha$ crosses a non-free vertex other than its
	initial point (we consider multiple crossings of the same vertex as distinct crossings). Up to possibly truncating $\alpha$ and $\beta$, we may assume that the last
	vertex $w$ of $\alpha$ is non-free, and that $\alpha$ crosses no other non-free vertex
	except possibly its initial point. Then our assumption on $p$ implies that the last vertex
	of $\beta$ must be $w$. But in this case  $\alpha \bar \beta$ would  define a non trivial group-element which is collapsed by $p$, contradicting that $X,Y$ are in the same deformation space.
\end{proof}

 \begin{prop}	\label{foldable}
  Let $[\phi]\in\Out(\G)$ be an irreducible element with $\lambda(\phi)>1$.
    Let $X, Y \in \O(\G)$, $X \neq Y$, and suppose that there is $p:X \to Y$ a straight map such that 
    	\begin{enumerate}
    	\item $p$ is a local isometry on edges;
    	\item if $v$ is a non-free vertex in $Y$, then $p^{-1}(v)$ is a single vertex.
    \end{enumerate}

  	   If any $p$-illegal turn is simplex regular (Definition~\ref{defnsc}),  then $\lambda_\phi(Y)>\lambda(\phi)$.  
 \end{prop}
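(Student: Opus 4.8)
The plan is to argue by contradiction. Since $\lambda_\phi(Y)\ge\lambda(\phi)$ always, assume $\lambda_\phi(Y)=\lambda(\phi)$, so that $Y\in\Min(\phi)$. As $p$ is a local isometry on edges, fix a folding path $X=X_0,X_1,\dots,X_n=Y$ directed by $p$, with quotient maps $q_i\colon X_{i-1}\to X_i$ and maps $p_i\colon X_i\to Y$ (so $p_0=p$, $p_n=\mathrm{id}$, $p_{i-1}=p_i\circ q_i$); by Lemma~\ref{redsimpvol} it is finite, and $n\ge 1$ because $X\ne Y$. Each $q_i$ equivariantly folds a (non-degenerate) turn $\tau_{i-1}$ of $X_{i-1}$ whose two germs have the same $p_{i-1}$-image, so $\tau_{i-1}$ is $p_{i-1}$-illegal; away from newly created free trivalent vertices, $\Delta_{X_{i-1}}$ is a codimension-one face of $\Delta_{X_i}=(\Delta_{X_{i-1}})_{\tau_{i-1}}$, and, since $Y=X_n$ is a point of $\O(\G)$, it lies in the open simplex $\Delta_Y=\Delta_{X_n}$.

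The first, and principal, task is to check that each $\tau_{i-1}$ is candidate regular (and, for $i=1$, simplex regular) in its own simplex $\Delta_{X_{i-1}}$. For $i=1$ this is immediate from the hypothesis, since $\tau_0$ is literally a $p$-illegal turn of $X$. For $i\ge 2$ one pulls $\tau_{i-1}$ back along the composite quotient $X\to X_{i-1}$: because $p$ factors as $p_{i-1}$ composed with that quotient, and $p_{i-1}$ collapses $\tau_{i-1}$, every turn $\tilde\tau$ of $X$ mapping onto $\tau_{i-1}$ is $p$-illegal, hence simplex regular with respect to $\Delta_X$ by hypothesis, so in particular infinite non-free and non-degenerate; one then transfers this along the chain of faces $\Delta_X\prec\Delta_{X_1}\prec\cdots\prec\Delta_{X_{i-1}}$ — using that infinite non-free vertices persist under folds, that a loop of $A_{\Delta_{X_{i-1}}}$ collapses to a loop of $A_{\Delta_X}$, and Lemma~\ref{cancel} to control the folds that occur at the non-free vertex of $\tau_{i-1}$ — to conclude that $\tau_{i-1}$ is candidate regular for $\Delta_{X_{i-1}}$. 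This transfer of the critical/regular data between the simplices of the folding path, together with the handling of the simplicially trivial folds at free trivalent vertices, is the step I expect to be the main obstacle.

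Granting this, I would propagate the pointwise inequality $\lambda_\phi>\lambda(\phi)$ along the folding path. If $\Delta_X\cap\Min(\phi)=\emptyset$, then (as $\lambda_\phi\ge\lambda(\phi)$ everywhere) every point of $\Delta_X$ already satisfies $\lambda_\phi>\lambda(\phi)$. Otherwise pick $X^*\in\Delta_X\cap\Min(\phi)$ and a train track representative $f^*$ of $\phi$ on $X^*$; it is a minimal optimal map (Lemma~\ref{wasfootnote}), and $\tau_0$ is $f^*$-legal because $\tau_0$ is simplex regular for $\Delta_X$ (Definition~\ref{defnsc}), so Proposition~\ref{FoldingCandidateRegular2} gives $\lambda_\phi(W)>\lambda(\phi)$ for every $W\in(\Delta_X)_{\tau_0}=\Delta_{X_1}$. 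In either case there is an index $i_0\in\{0,1\}$ with $\lambda_\phi>\lambda(\phi)$ pointwise on $\Delta_{X_{i_0}}$. Now, for each subsequent step, $\Delta_{X_i}=(\Delta_{X_{i-1}})_{\tau_{i-1}}$, and by Lemma~\ref{unf} every $Z\in\Delta_{X_i}$ equals $(\unf_{\tau_{i-1}}(Z))^{t}$ for $\unf_{\tau_{i-1}}(Z)\in\Delta_{X_{i-1}}$ and some $t>0$; since $\tau_{i-1}$ is non-degenerate and candidate regular, Corollary~\ref{C2} (applied to a minimal optimal map on $\unf_{\tau_{i-1}}(Z)$) yields $\lambda_\phi(Z)\ge\lambda_\phi(\unf_{\tau_{i-1}}(Z))$. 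Inducting on $i$ from $i_0$ to $n$, we obtain $\lambda_\phi>\lambda(\phi)$ pointwise on $\Delta_{X_n}=\Delta_Y$; in particular $\lambda_\phi(Y)>\lambda(\phi)$, contradicting $\lambda_\phi(Y)=\lambda(\phi)$. Hence $\lambda_\phi(Y)>\lambda(\phi)$.
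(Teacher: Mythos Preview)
Your inductive scheme along a folding path is a genuinely different route from the paper's, but the step you flag as ``the main obstacle'' is a real gap, not just a technicality, and I do not see how to close it.

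The concrete problem is that intermediate folds can occur at \emph{free} vertices of valence $\ge 4$, where your transfer argument has nothing to say. Suppose the first fold at an infinite non-free vertex $v$ identifies two $\sigma_p$-edges whose far endpoints are distinct free vertices $u,u'$ of $X$, each of valence $\ge 3$. After the fold these merge to a free vertex $w$ of valence $\ge 4$ in $X_1$. Now an edge $c$ that was at $u$ and an edge $c'$ that was at $u'$ may satisfy $p(c)=p(c')$ as germs (nothing in the hypothesis forbids this: in $X$ the pair $c,c'$ is not a turn at all, so it is not among the $p$-illegal turns assumed simplex regular; and condition~(2) only forces $p(u)=p(u')$ to be free in $Y$, which is consistent). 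Then $\tau_1=[c,c']$ at $w$ is $p_1$-illegal, free, and based at a non-trivalent vertex, so it is candidate \emph{critical} in $\Delta_{X_1}$ and folding it genuinely changes the simplex. Neither Corollary~\ref{C2} nor Proposition~\ref{FoldingCandidateRegular2} applies, your pullback argument fails (there is no turn of $X$ mapping to $\tau_1$), and Lemma~\ref{cancel} does not help --- it only forbids non-free vertices along the cancellation paths, not high-valence free ones. The same phenomenon also breaks the asserted chain of faces $\Delta_X\prec\Delta_{X_1}\prec\cdots$: completing a fold at a trivalent free vertex can drop you to a proper face, so the sequence of simplices need not be monotone.

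The paper avoids all of this by never leaving $X$. It fixes a minimal optimal (train track, if $X\in\Min(\phi)$) map $f:X\to X$ and exhibits a single loop $\gamma$ for which $L_Y(p(f(\gamma)))/L_Y(p(\gamma))>\lambda(\phi)$ directly. When $X\notin\Min(\phi)$ any $f$-legal $\gamma\in A_\Delta$ in the tension graph works, since $f(\gamma)$ crosses only candidate-critical turns and hence is $p$-legal. When $X\in\Min(\phi)$ and $f$ happens to permute the $p$-illegal turns $\tau_1,\dots,\tau_k$, the paper passes to $\langle\sim_{f^k}\rangle$-legal loops, takes $\gamma_C$ maximising the $p$-cancellation $C=L_X(\gamma)-L_Y(p(\gamma))$ over loops with $\sum_i\#(\gamma,\tau_i)=1$ (bounded cancellation makes this a maximum), and uses Lemma~\ref{cancel} once, globally, to show that the turn-surgeries of Proposition~\ref{CandLegal0} (which occur only at non-free vertices) do not touch the cancellation region; hence $f(\gamma_C)$ also lies in the class and $\lambda_\phi(Y)\ge(\lambda(\phi)L_X(\gamma_C)-C)/(L_X(\gamma_C)-C)>\lambda(\phi)$. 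No intermediate simplices appear.
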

 \begin{proof}
     By Lemma~\ref{L0}, the set of $p$-illegal turn is a finite set $\{\tau_1,\dots,\tau_k\}$. Since $X \neq Y$, this set is non-empty.

Denote $\Delta$ the simplex of $X$.     Let $f:X\to X$ be a minimal
  optimal map representing $\phi$. Note that for any element of $G$, seen as a loop
  $\gamma$ in $X$, the loop $p(f(\gamma))$ represents $\phi(p(\gamma))$ in $Y$.

  Firstly, we deal with the case where $X \notin \Min(\phi)$. By Remark~\ref{RB3} and Lemma~\ref{LB1} there is an $f$-legal loop $\gamma \in A_{\Delta}$ in the tension
graph of $f$. Its image, $f(\gamma)$, crosses only candidate critical turns by definition of
candidate critical and, in particular, it doesn't cross any of
the $\tau_i$'s. In other words, $f(\gamma)$ is $p$-legal and therefore, as $p$ is a local 
isometry on every edge, $L_Y(p(f(\gamma))) = L_X(f(\gamma))$.
On the other hand, again because $p$ is a local isometry, $L_Y(p(\gamma)) \leq L_X(\gamma)$
which means that $$\lambda_{\phi}(Y) \geq \frac{L_Y(p(f(\gamma)))}{L_Y(p(\gamma))} \geq
\frac{L_X(f(\gamma))}{L_X(\gamma)} = \lambda_{\phi} (X) > \lambda(\phi).$$

Suppose now $X\in\Min(\phi)$. In this case we may assume that $f$ is a train track map
representing $\phi$ (in particular $f$ is a minimal optimal map, see Section~\ref{definitions}). All $\tau_i$ are $f$-legal because of simplex-regularity.

Let's first assume that there is some $\tau_j$ which is mapped by $f$ to a turn distinct from any of the $\tau_i$'s. Then, by Corollary~\ref{CandLegal2}, there is an $f$-legal loop $\gamma$ (which is in the tension graph) such that, 

\begin{enumerate}
	\item $\sum_{i=1}^k \#(\gamma,\tau_i) = 1$,
	\item $\sum_{i=1}^k \#(f(\gamma),\tau_i) =0$.
\end{enumerate}

Hence, $L_Y(p(\gamma)) < L_X(\gamma)$ whereas, $L_Y(p(f(\gamma))) = L_X(f(\gamma))$. Hence, 

$$
\lambda_{\phi}(Y) \geq \frac{L_Y(p(f(\gamma)))}{L_Y(p(\gamma))} > \frac{L_X(f(\gamma))}{L_X(\gamma)} = \frac{\lambda_\phi(X)L_X(\gamma)}{L_X(\gamma)}   = \lambda (\phi). 
$$

Otherwise, $f$ must leave invariant the set of $\tau_i$. We will now work with the
$\langle\sim_{f^k}\rangle$-legal structure, in order to ensure that the image of a legal loop
is again legal. Since all $\tau_i$ are legal and set of $\tau_i$ is invariant under the action of $f$, then they also are
$\langle\sim_{f^k}\rangle$-legal.

Let $\Sigma$ be the set of $\langle\sim_{f^k}\rangle$-legal loops $\gamma$ in $X$ that satisfy
$$\sum_{i=1}^k\#(\gamma,\tau_i)=1.$$

By Corollary~\ref{candlegal3}, the set $\Sigma$ is not empty.
Let $$C=\sup\{L_X(\gamma)-L_Y(p(\gamma)): \gamma\in \Sigma\} .$$
The Bounded Cancellation Lemma (see for
instance~\cite[Proposition~3.12]{Horbez}) and discreteness show that $C$ is a maximum, which
means that $C$ is realised by some loop $\gamma_C$. Moreover, since the $\tau_i$'s are
$p$-illegal, $C>0$. We claim that $\gamma_C$ can be chosen so that $f(\gamma_C)$ also belongs
to $\Sigma$. This will be enough as, since $\gamma_C$ realises $C$, then $L_Y(p(\gamma_C))= L_X(\gamma_C)-C$, while
$L_Y(p(f(\gamma_C)))\geq L_X(f(\gamma_C))-C$ (because $f(\gamma_C) \in \Sigma$). But then

 $$
\lambda_{\phi}(Y) \geq \frac{L_Y(p(f(\gamma_C)))}{L_Y(p(\gamma_C))} \geq \frac{L_X(f(\gamma_C)) -
	C}{L_X(\gamma_C) - C} = \frac{\lambda_\phi(X)L_X(\gamma_C) - C}{L_X(\gamma_C)
	- C}   > \lambda (\phi) 
$$

where the strict inequality follows from the fact that $\lambda_\phi(X)= \lambda(\phi)  > 1$.

We prove now our claim. Consider any $\gamma\in\Sigma$ realising the maximum $C$.
By Proposition~\ref{CandLegal0} $\gamma$ can be modified via turn surgeries to a
$\langle\sim_{f^k}\rangle$-legal  turn $\gamma'$ 
such that $\sum_i\#(f(\gamma'),\tau_i)=1$. Note that such surgeries occur only at
non-free vertices.

It remains to show that the performed surgeries do not affect the $p$-cancellation of the
original loop $\gamma$. As $\tau_j$ is the unique $p$-illegal of $\gamma$, there exist
sub-paths $\alpha, \beta$ of $\gamma$ so that $p(\alpha) = p(\beta)$, the first edge of
$\alpha$ together with the first edge of $\beta$ form the turn $\tau_j$, and $L_X(\alpha) =
L_X(\beta) = C/2$. That is, $\alpha$ and $\beta$ are the sub-paths of $\gamma$ which realise
the $p$-cancellation. By Lemma~\ref{cancel}, both $\alpha$ and $\beta$ cross only free turns
and so the performed surgeries did not affect neither $\alpha$, nor $\beta$. As the turn
$\tau_j$ is not affected by the surgeries, as well, it follows that the $p$-cancellation of
$\gamma'$, is the same as the $p$-cancellation of $\gamma$, that is to say
$$ L_X(\gamma')-L_Y(p(\gamma')) = L_X(\gamma)- L_Y(p(\gamma))=C$$
as we wanted.

\end{proof}

\section{Exploring the Minset}

Proposition~\ref{FoldingCandidateRegular2} tells us that if we want to travel along
$\Min(\phi)$,  we have to perform only simplex critical turns. Given two simplices
$\Delta,\Delta^1$, one face on the other, we can easily go from
$\Delta$ to $\Delta^1$ in few steps by simple folds. However, even if both simplices intersect
$\Min(\phi)$, such folds need not necessarily to be  
simplex critical. Nonetheless, it may exists a, a priori longer, folding path between them that uses only
simplex critical folds.

\begin{defn}
  Let $[\phi]\in\Out(\G)$ and $\Delta$ be a simplex in $\O(\G)$.
  We denote by the {\bf simplex critical neighbourhood} of $\Delta$ of radius 1, all the
  simplices of $\O(\G)$ which can be obtained from $\Delta$ via a simplex critical fold,
  including $\Delta$ itself. 
	
	We denote by the simplex critical neighbourhood of $\Delta$ of radius $n+1$, the union of all the simplex critical neighbourhoods of radius 1, of all simplices in the  simplex critical neighbourhood of $\Delta$ of radius $n$. 
\end{defn}

\begin{rem}
	By Theorem~\ref{critical}, if $\phi$ is irreducible with $\lambda(\phi) > 1$, then any simplex critical neighbourhood of finite radius consists of finitely many simplices.
\end{rem}

\begin{defn}
	Let $X$ be a point of $\O(\G)$ and let's denote by $\Delta = \Delta_X$ the
        corresponding simplex. The dimension of $D(\Delta)$ of $\Delta$ is the number of edges
        of $X$.  
	We denote by $D=D(\G)$ the dimension of $\O(\G)$, i.e. the maximum
	number of edges we see in elements of $\O(\G)$.
	
	In $\O_1(\G)$ the dimension of the simplex containing $X$ is one less, as is the dimension of the entire space.
\end{defn} 
\begin{thm}
	\label{locfin}
	Let $[\phi]\in\Out(\G)$ be an irreducible automorphism with $\lambda (\phi)  > 1$.  
	Let $\Delta, \Delta^1$ be open simplices of $\O(\G)$, both intersecting
        $\Min(\phi)$, and such that $\Delta$ is a face of $\Delta^1$. Then, $\Delta^1$ is
        contained in the simplex  critical neighbourhood of $\Delta$ of radius $2 D(\G)^2$.  
	In particular, $\Min(\phi)$ is locally finite.
	
	We immediately deduce the same statement for $\O_1(\G)$.

\end{thm}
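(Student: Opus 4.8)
The plan is to realise $\Delta^1$ from $\Delta$ by a folding path, of length at most $2D(\G)^2$, each of whose steps is a simplex critical fold; the construction is greedy and driven by the unfolding of Lemma~\ref{unfold} together with Proposition~\ref{foldable} (which is where irreducibility and $\lambda(\phi)>1$ are used). Fix a point $Y\in\Delta^1\cap\Min(\phi)$, so that $\lambda_\phi(Y)=\lambda(\phi)$. Since $\Delta$ is a face of $\Delta^1$, Lemma~\ref{unfold} (applied with $\Delta'=\Delta^1$) gives a point $X_0=\unf(Y)\in\Delta$ and a straight map $p:X_0\to Y$ which is a local isometry on edges, satisfies condition~$(2)$ of that lemma (preimages of non-free vertices are single vertices), and has $\operatorname{svol}(p)\le 2D(\Delta^1)^2\le 2D(\G)^2$.

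I would then build inductively a folding path $X_0,X_1,\dots,X_n=Y$ directed by $p$, with associated straight maps $p_i:X_i\to Y$ (so $p_0=p$ and $p_n=\mathrm{id}$), choosing at each stage a \emph{simplex critical} turn to fold. Suppose $X_{i-1}$ and $p_{i-1}$ have been produced with $X_{i-1}\neq Y$. By the ``moreover'' clause of Lemma~\ref{unfold}, $p_{i-1}$ is still a local isometry on edges and still satisfies condition~$(2)$. Since the folding path has not yet terminated, there is a pair of $\sigma_p$-edges with a common vertex $v$ and equal $p_{i-1}$-image; equivalently, $X_{i-1}$ has a $p_{i-1}$-illegal turn $\tau$ at $v$ (and $\tau$ is non-degenerate, as $p_{i-1}$ is a local isometry on edges, so $\Delta_\tau$ is defined). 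Now apply Proposition~\ref{foldable} to $p_{i-1}:X_{i-1}\to Y$: because $\lambda_\phi(Y)=\lambda(\phi)$ is \emph{not} strictly larger than $\lambda(\phi)$, it cannot be that every $p_{i-1}$-illegal turn is simplex regular with respect to $\Delta_{X_{i-1}}$; hence we may take $\tau$ to be simplex critical in $\Delta_{X_{i-1}}$. Folding $\tau$ (by the common length of the two identified $\sigma_p$-edges) is a legitimate step of a folding path directed by $p$ and, by definition, a simplex critical fold; this yields $X_i$ and $p_i$. Each step strictly decreases the number of $\sigma_p$-edges, so the process reaches $Y$, and by Lemma~\ref{redsimpvol} its length satisfies $n\le\operatorname{svol}(p)\le 2D(\G)^2$.

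Consequently
\[
\Delta=\Delta_{X_0},\ \Delta_{X_1},\ \dots,\ \Delta_{X_n}=\Delta^1
\]
is a chain in which each simplex is obtained from the previous one by a single simplex critical fold (possibly $\Delta_{X_{i-1}}=\Delta_{X_i}$ when a free trivalent turn is folded), so $\Delta^1$ lies in the simplex critical neighbourhood of $\Delta$ of radius $2D(\G)^2$. For local finiteness: given a simplex $\Delta$ meeting $\Min(\phi)$, any simplex having $\Delta$ as a face and meeting $\Min(\phi)$ lies in that radius-$2D(\G)^2$ neighbourhood, which is a finite set of simplices by Theorem~\ref{critical}, while $\Delta$ has only finitely many faces in any case; hence only finitely many simplices adjacent to $\Delta$ meet $\Min(\phi)$. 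The statement for $\O_1(\G)$ follows immediately by intersecting with the volume-one section, which lowers every dimension by one without altering the combinatorics. I expect the only real subtlety to be in the greedy step: one must keep the folding path genuinely ``directed by $p$'', so that each $p_i$ retains the two properties required to re-invoke Proposition~\ref{foldable}, and it is Proposition~\ref{foldable} that forces a simplex critical illegal turn to exist at every stage short of $Y$.
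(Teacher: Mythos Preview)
Your proposal is correct and follows essentially the same route as the paper: use Lemma~\ref{unfold} to produce $p:\unf(Y)\to Y$, apply the contrapositive of Proposition~\ref{foldable} at each stage to find a simplex critical $p_i$-illegal turn, fold it, and bound the number of steps by $\operatorname{svol}(p)\le 2D(\G)^2$ via Lemma~\ref{redsimpvol}. Your treatment of local finiteness (Theorem~\ref{critical} plus the trivial bound on faces) and the passage to $\O_1(\G)$ also match the paper's argument.
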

\begin{proof}
  We will show there exists a sequence of simplices, 
	$\Delta_0, \Delta_1, \ldots, \Delta_n$, with $\Delta_0 = \Delta$ and $\Delta_n =
        \Delta^1$, where $n \leq 2 (D(\G))^2 $, and such that each $\Delta_{i+1}$ is obtained by
        folding a $\Delta_i$-simplex critical turn.
        	
	The underlying graph of $\Delta$ is obtained from that of $\Delta^1$ by collapsing a
        forest $\F$ each of whose tree contains at most one non-free vertex. We apply
        Lemma~\ref{unfold}, to get a straight map, $p:\unf(Y) \to Y$ which is locally isometric
        on edges. Subdivide $\unf(Y)$ so that the $p$-image of each subdivided edge is a single edge in $Y$. 
        Proposition~\ref{foldable} tells us that it must exist a $p$-illegal turn that is also simplex critical. Fold this turn; this is an isometric fold directed by $p$, since $p$ is an isometry on edges, and we get a map  $p_1:X_1\to
        Y$ which, by Lemma~\ref{unfold}, satisfies conditions $(1)$ and $(2)$ of
        Proposition~\ref{foldable}, which therefore applies. This process recursively defines a
        folding path from $\unf(Y)$ to $Y$, directed by $p$. The length of such folding path is
        bounded by $2D(\Delta^1)^2$ by Lemmas~\ref{redsimpvol} and~\ref{unfold}.

	To conclude the proof, note that any simplex of $\O(\G)$ adjacent to $\Delta$ and
        lying in $\Min(\phi)$ either has $\Delta$ as a face, or is a face of $\Delta$. The
        first case is dealt with above, and in the second case, we know that there are at most
        $2^{D(\Delta)} \leq 2 ^{D(\G)}$ faces.

	The statement for $\O_1(\G)$ follows since the displacement function is invariant under change of volume.
	
\end{proof}

We now show that Theorem~\ref{locfin} may be strengthened to show that the minimally displaced
set is {\em uniformly} locally finite. That is, there is a uniform bound (depending only on
$\lambda(\phi)$ and $D(\G)$) on the number of
simplices, adjacent to a given simplex in $\Min(\phi)$ which are also in $\Min(\phi)$. In what
follows we are not focused in optimal bounds.

\begin{defn}
	Let $\Delta$ be a simplex in $\O(\G)$. Then we define the centre $X_{\Delta} \in
        \Delta$ to be the graph where all edges have the same length. 	
\end{defn}
Since we are interested in the function
        $\lambda_{\phi}$, which is scale invariant, we may scale the metric on 
        $X_{\Delta}$ as we wish; we will therefore decree that all the edges of $X_{\Delta}$
        have length $1$.  

\begin{lem}\label{Lin} Let $[\phi]\in\Out(\G)$. For any $X,Y\in \O(\G)$ we have
$$\frac{\lambda_\phi(X)}{\lambda_\phi(Y)}\leq\Lambda(X,Y)\Lambda(Y,X).$$
\end{lem}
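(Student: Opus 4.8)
The plan is to exploit two standard features of the Lipschitz "distance" $\Lambda$: it is submultiplicative under composition, i.e. $\Lambda(X,Z)\le \Lambda(X,Y)\Lambda(Y,Z)$ for any $X,Y,Z\in\O(\G)$, and it is invariant under the action of $\Out(\G)$, i.e. $\Lambda(\phi X,\phi Y)=\Lambda(X,Y)$ (this is precisely the reason $\lambda_\phi$ is well defined on $\O(\G)$). Recall also $\Lambda\ge 1$ always, so $\lambda_\phi(Y)=\Lambda(Y,\phi Y)\ge 1>0$ and the quotient on the left makes sense.

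The computation then goes through the chain $X\to Y\to \phi Y\to \phi X$. First I would write
$$\lambda_\phi(X)=\Lambda(X,\phi X)\le \Lambda(X,Y)\,\Lambda(Y,\phi Y)\,\Lambda(\phi Y,\phi X),$$
applying submultiplicativity twice. Next, using $\Out(\G)$-invariance on the last factor, $\Lambda(\phi Y,\phi X)=\Lambda(Y,X)$, and recognising $\Lambda(Y,\phi Y)=\lambda_\phi(Y)$, this becomes
$$\lambda_\phi(X)\le \Lambda(X,Y)\,\lambda_\phi(Y)\,\Lambda(Y,X).$$
Dividing by $\lambda_\phi(Y)$ yields the claimed inequality.

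There is essentially no obstacle here; the only points to be careful about are that $\Lambda$ is genuinely $\Out(\G)$-invariant (so that $\Lambda(\phi Y,\phi X)=\Lambda(Y,X)$, which is what lets the $\phi$'s cancel against a symmetric product) and that $\lambda_\phi(Y)$ is strictly positive so that the division is legitimate — both of which are immediate from the definitions recalled in the Notation section. If one wanted to avoid even invoking invariance, one could instead note $\Lambda(\phi Y,\phi X)\le\Lambda(\phi Y, Y)\Lambda(Y,X)\Lambda(X,\phi X)$ and iterate, but the one-line argument above is cleaner and this is the route I would take.
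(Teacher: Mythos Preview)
Your proof is correct and follows exactly the same route as the paper's: apply the multiplicative triangle inequality to the chain $X\to Y\to\phi Y\to\phi X$ and use $\Out(\G)$-invariance of $\Lambda$ to replace $\Lambda(\phi Y,\phi X)$ by $\Lambda(Y,X)$. One minor slip: ``$\Lambda\ge 1$ always'' is not true in the unnormalised space $\O(\G)$ (it does hold between points of equal volume, hence for $\Lambda(Y,\phi Y)$), but all you actually need is $\lambda_\phi(Y)>0$, which is immediate from the definition as a supremum of positive ratios.
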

\begin{proof}
This immediately follows from the non-symmetric {\em triangle inequality}: 
	$$
	\lambda_{\phi}(X)=\Lambda(X,\phi X) \leq    \Lambda(X,Y) \Lambda(Y,\phi Y)
        \Lambda(\phi Y, \phi X)=\Lambda(X, Y)  \lambda_{\phi}(Y)  \Lambda( Y,  X).
       $$

\end{proof}

As above,
 $D=D(\G)=\dim(\O(\G))$ is the maximum number of (orbits of) edges we see in elements of $\O(\G)$.

\begin{lem}
	\label{numfold}
	Let $[\phi] \in Out(\G)$, and $\Delta, \Delta'$ be simplices in $\O(\G)$ such
        that $\Delta$ is a face of $\Delta'$. Then
	$$
	\frac{1}{2D}\lambda_{\phi}(X_{\Delta}) \leq \lambda_{\phi}(X_{\Delta'})  \leq 2D\lambda_{\phi}(X_{\Delta}).
	$$
\end{lem}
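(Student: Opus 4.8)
The plan is to control the two mutual stretching factors $\Lambda(X_\Delta,X_{\Delta'})$ and $\Lambda(X_{\Delta'},X_\Delta)$ and then feed them into Lemma~\ref{Lin}. Since $\Delta$ is a face of $\Delta'$, the underlying graph of $X_\Delta$ is obtained from that of $X_{\Delta'}$ by collapsing a simplicial forest $\F$ each of whose components contains at most one non-free vertex (exactly the setup of Lemma~\ref{unfold}); moreover $\F$ has $D(\Delta')-D(\Delta)\le D-1$ edges. Recall also that both centres carry the metric in which every edge has length $1$.

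First I would bound $\Lambda(X_{\Delta'},X_\Delta)$. Collapsing $\F$ can only decrease reduced lengths of loops, so $L_{X_\Delta}(\gamma)\le L_{X_{\Delta'}}(\gamma)$ for every $\gamma$, and hence $\Lambda(X_{\Delta'},X_\Delta)\le 1$. Next I would bound $\Lambda(X_\Delta,X_{\Delta'})$ by exhibiting an explicit straight $\O$-map $q\colon X_\Delta\to X_{\Delta'}$ going the other way, built exactly as in the unfolding construction of Lemma~\ref{unfold} but with the centre metric left fixed: each edge $e$ of $X_\Delta$ is the image of a unique edge $\hat e$ of $X_{\Delta'}$, and $q$ sends $e$ with constant speed onto the path $\gamma_a\cdot \hat e\cdot\gamma_b$, where $\gamma_a,\gamma_b$ are the reduced paths in $\F$ joining the endpoints of $\hat e$ to the chosen roots of their components (constant paths if the endpoints are already roots). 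A reduced path inside the forest traverses each edge of $\F$ at most once, so $\gamma_a\cdot\hat e\cdot\gamma_b$ has combinatorial length at most $1+2|\F|\le 2D-1$; since every edge of $X_{\Delta'}$ has length $1$, this means $q$ is $(2D-1)$-Lipschitz, whence $\Lambda(X_\Delta,X_{\Delta'})\le 2D-1$.

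Finally, applying Lemma~\ref{Lin} in both orders to the pair $X_\Delta,X_{\Delta'}$ gives
$$\frac{\lambda_\phi(X_\Delta)}{\lambda_\phi(X_{\Delta'})}\le \Lambda(X_\Delta,X_{\Delta'})\,\Lambda(X_{\Delta'},X_\Delta)\le (2D-1)\cdot 1 < 2D$$
and, symmetrically, $\lambda_\phi(X_{\Delta'})/\lambda_\phi(X_\Delta)\le 1\cdot(2D-1)<2D$, which is precisely the two inequalities in the statement.

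The only steps needing care are in the construction of $q$: one must verify that $q$ really is an $\O$-map, i.e. that it induces the change of markings between $\Delta$ and $\Delta'$ (it is, being a section of the collapse map up to homotopy, just as in Lemma~\ref{unfold}), and that the crude length bound $1+2|\F|$ remains valid in the degenerate case where both endpoints of $\hat e$ lie in the same component of $\F$ — then $e$ is an edge-loop of $X_\Delta$ and $\gamma_a,\gamma_b$ may run over overlapping edges of that component, but the total combinatorial length is still at most twice the number of edges of $\F$ plus one. Everything else is immediate, so I expect no real obstacle; this is a soft estimate and, consistently with the paper's remark, far from optimal.
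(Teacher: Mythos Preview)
Your proof is correct and follows essentially the same approach as the paper: both reduce to Lemma~\ref{Lin}, bound $\Lambda(X_{\Delta'},X_\Delta)\le 1$ via the collapse, and bound $\Lambda(X_\Delta,X_{\Delta'})$ using the unfolding map of Lemma~\ref{unfold} read with the centre metric. The only cosmetic difference is in the bookkeeping for the Lipschitz constant of the unfolding map: you bound the image of an edge by $1+2|\F|\le 2D-1$ via path lengths in the forest, while the paper observes that the image crosses each edge of $X_{\Delta'}$ at most twice, giving $2D$; either estimate suffices.
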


\begin{proof}
By Lemma~\ref{Lin}, it is sufficient to prove that
	$$
	\Lambda(X_{\Delta}, X_{\Delta'}) \Lambda(X_{\Delta'}, X_{\Delta}) \leq 2D.
	$$
	
	Since the product on the left is scale invariant (even though each factor is not) we
        are free to choose the volumes for each of the points. Specifically, the two centres
        have different volumes, as we set every edge to have length $1$.
        In particular, 
	$$
	 \Lambda(X_{\Delta'}, X_{\Delta}) \leq 1, 
	$$
	as loops become shorter when we collapse a forest (since the length in each case is a count of the number of edges). 
	
	To complete the argument, we consider the map $p:\unf(X_{\Delta'})\to
        X_{\Delta'}$ given by Lemma~\ref{unfold}, and we read it as a map from $X_\Delta\to X_{\Delta'}$.
        It is easy to see that the image of an edge under this map cannot cross the same edge more than twice (usually no more than once, but twice may happen if
        $\Delta$ has some edge-loop). It follows that $L_{X_{\Delta'}}(p(\gamma))\leq
        2DL_X(\gamma)$. Hence $\Lambda(X_\Delta,X_{\Delta'})\leq 2D$.
\end{proof}

 The maximum number of vertices we see in elements of $\O(\G)$ is bounded by $2D$. 
Denote by $M=M(\G)$ the maximal cardinality of finite vertex groups. Set $K=K(\G)=D+M+1$.
\begin{lem}
	\label{numcrit}
	Let $[\phi]\in\Out(\G)$, $\Delta$ be a simplex of $\O(\G)$,  and
        $\mathcal{C}_{\Delta}$ be   the corresponding set of simplex critical
        turns. Then 
	$$
	|\mathcal{C}_{\Delta}| \leq(10K)!\lambda_{\phi}(X_{\Delta})
	$$
\end{lem}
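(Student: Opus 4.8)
The plan is to promote Theorem~\ref{critical} to an explicit finite description of $\mathcal C_\Delta$, estimate each of the resulting families of turns separately, and then collect the estimates using the trivial bound $\lambda_\phi(X_\Delta)=\Lambda(X_\Delta,\phi X_\Delta)\ge 1$. Throughout, let $\Gamma$ be the underlying graph of $\Delta$ (we may assume $\Gamma$ has at least one edge, since otherwise $\mathcal C_\Delta=\emptyset$): it has at most $D$ unoriented edges, hence at most $2D$ oriented edges and at most $2D$ vertices, and $\sum_v d(v)\le 2D$, where $d(v)$ is the number of oriented edges issuing from a vertex $v$; in particular $D\ge 1$ and $K\ge 2$. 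By Theorem~\ref{critical} I would fix a single train track representative $f:X\to X$ of $\phi$ at some point of $\Delta$ (if $\Delta$ carries no train track point, simply drop the corresponding term), and record that $\mathcal C_\Delta$ is contained in the union of (i) the turns based at vertices with finite (possibly trivial) vertex group; (ii) the turns crossed by some loop $\phi(\gamma)$ with $\gamma\in A_\Delta$ (Definition~\ref{ADelta}); and (iii) the $f$-illegal turns.

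For (i): a turn at a vertex $v$ with $|G_v|\le M$ is a $G_v$-orbit of an unordered pair of germs $g_ie_i$ with $g_i\in G_v$, so there are at most $d(v)^2M$ of them at $v$; summing over $v$ gives at most $\big(\sum_v d(v)\big)^2 M\le 4D^2M$ such turns. For (iii): since $f$ is a train track map it is an immersion on each edge, hence sends no edge to a vertex, so a turn $[\eta_1,\eta_2]$ at $v$ is $f$-illegal precisely when the derivative germs $Df(\eta_1)$ and $Df(\eta_2)$ agree; using the equivariance $Df(g\eta)=\phi(g)Df(\eta)$ (Remark~\ref{actions}) together with injectivity of $\phi$, one checks that for each ordered pair $(e_1,e_2)$ of oriented edges at $v$ there is at most one $h\in G_v$ making $[e_1,he_2]$ $f$-illegal, and that no turn of the form $[e,he]$ is $f$-illegal; hence there are at most $d(v)^2$ illegal turns at $v$ and at most $4D^2$ in total. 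For (ii): a loop in $A_\Delta$ crosses each of the at most $D$ unoriented edges at most $4$ times, so has combinatorial length at most $4D$, and at each of its at most $4D$ steps there are at most $2D$ choices of oriented edge and at most $2$ choices of group element (the trivial element or the chosen $h_v$), whence $|A_\Delta|\le (4D)^{4D+1}$. Since every edge of $X_\Delta$ has length $1$, the cyclically reduced representative of $\phi(\gamma)$ has combinatorial length $L_{X_\Delta}(\phi(\gamma))\le \Lambda(X_\Delta,\phi X_\Delta)\,L_{X_\Delta}(\gamma)=\lambda_\phi(X_\Delta)\,L_{X_\Delta}(\gamma)\le 4D\,\lambda_\phi(X_\Delta)$, and a cyclically reduced loop crosses as many turns as its combinatorial length; therefore the turns of type (ii) number at most $(4D)^{4D+1}\cdot 4D\,\lambda_\phi(X_\Delta)$.

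Collecting the three bounds and using $\lambda_\phi(X_\Delta)\ge 1$ to absorb the first two terms, one obtains
\[
|\mathcal C_\Delta|\ \le\ \big(4D^2M+4D^2+(4D)^{4D+2}\big)\,\lambda_\phi(X_\Delta).
\]
Since $D\le K$ and $M\le K$ with $K\ge 2$, the bracketed constant is at most $(4K)^{6K}$, which is bounded above by the product of the $6K$ integers $4K+1,\dots,10K$, hence by $(10K)!$; this yields the claimed inequality $|\mathcal C_\Delta|\le (10K)!\,\lambda_\phi(X_\Delta)$.

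The only step that requires real care is the estimate for the $f$-illegal turns in (iii): a vertex with infinite vertex group carries infinitely many turns, so the bound $d(v)^2$ on the illegal ones there cannot be purely combinatorial and must be extracted from the equivariance of the derivative map $Df$ and the injectivity of $\phi$ (which is exactly what forces a unique $h$ per ordered pair of edges). Everything else is bookkeeping, the finiteness of $A_\Delta$, and the Lipschitz estimate $L_{X_\Delta}(\phi\gamma)\le\lambda_\phi(X_\Delta)\,L_{X_\Delta}(\gamma)$ read off in the unit-length metric on $X_\Delta$.
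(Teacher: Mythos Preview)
Your proof is correct and follows essentially the same approach as the paper's: decompose $\mathcal C_\Delta$ via Theorem~\ref{critical} into candidate-critical turns (bounded by $|A_\Delta|$ times the length of $\phi$-images in the unit-edge metric), turns at finite-group vertices, and illegal turns for a single train track map; then collect the crude combinatorial bounds and absorb constants using $\lambda_\phi(X_\Delta)\ge 1$. Your estimates for $|A_\Delta|$ and for the individual pieces differ from the paper's only by harmless constants, and your final arithmetic showing $(4K)^{6K}\le(10K)!$ is a cleaner version of the paper's ``and the result follows''; the one tiny slip is the claim that no degenerate turn $[e,he]$ is $f$-illegal (the trivial turn $h=1$ is), but this does not affect your bound $d(v)^2$.
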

\begin{proof}
	Since all the edges of $X_{\Delta}$ have length $1$, the number of turns crossed by a
        loop is then equal to 
        its length in  $X_{\Delta}$. Hence, the number of turns crossed by an element of
        $A_\Delta$ is bounded above by 
	$$
	\lambda_{\phi}(X_{\Delta}) 4 D  |A_{\Delta}|,
	$$
	where the term $4 D$ appears as the maximum length of a loop in $A_{\Delta}$, as read in $X_{\Delta}$ (see Definition~\ref{ADelta}). Note that the length in $X_{\Delta}$ is simply the number of edges.

	To estimate $|A_{\Delta}|$, we count the number of sequences of edges, where each edge
        appears at most $4$ times - ignoring the incidence relations to simplify matters. The
        number of sequences of $n$ objects of length $k$, is $n!/(n-k)! \leq n!$, and so the
        number of sequences of $n$ objects of length at most $n$ is bounded by $(n+1)!$. For
        building a loop in $A_\Delta$ we have $D$ edges, each of which appear at
        most $4$ times and, taking in account the group elements, the total number of objects
        we can use to write an element of $A_\Delta$ is $8D$. Hence
	$$
	|A_{\Delta}| \leq (8D+1)!
	$$
	
	This only bounds the number of turns crossed by elements of $A_\Delta$. 
        In order to bound the simplex critical turns, we need to add all turns based at
        vertices with finite vertex group, and all
        the illegal turns for a putative train track map. The former is bounded by the number
        of possible pairs of germs of  edges multiplied by the cardinality of finite vertex
        group, hence 
        by $M(\G)D^2$. The latter, because of Lemma~\ref{L0}, is bounded 
        by the number of pairs of edges; hence by $D^2$. In
        total we have 
        $$\lambda_\phi(X_\Delta)4D(8D+1)!+(M+1)D^2\leq \lambda_\phi(X_\Delta)4K(8K)!+K^3$$

        and the result follows.  
\end{proof}

\begin{lem}
	\label{numirred}
	Let $[\phi]\in\O(\G)$ be an irreducible element, and suppose that $\Delta$ is an
        open simplex of $\O(\G)$ which contains a point of $\Min(\phi)$. Then, 
	$$
	\lambda_{\phi}(X_{\Delta}) \leq 9 \dim\O(\G)\lambda(\phi)^{3D+2}.
	$$
\end{lem}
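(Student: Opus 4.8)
The plan is to produce a concrete point of $\Min(\phi)\cap\Delta$ whose metric is rigid, and then compare it with the centre $X_\Delta$ via the non-symmetric triangle inequality of Lemma~\ref{Lin}. So first I would fix a point $X\in\Min(\phi)\cap\Delta$ (which exists by hypothesis) together with a train track map $f\colon X\to X$ representing $\phi$; recall such an $f$ is in particular a minimal optimal map. Let $e_1,\dots,e_N$ be the edges of $X$, so $N\le D$, and let $M=(m_{ij})$ be the transition matrix, $m_{ij}$ counting how many times $f(e_i)$ crosses $e_j$ in either orientation. Since $\phi$ is irreducible, $M$ is an irreducible matrix, and by Remark~\ref{RB3} the tension graph of $f$ is the whole of $X$; hence each edge is stretched exactly by $\Lip(f)=\lambda_\phi(X)=\lambda(\phi)$, i.e. $\sum_j m_{ij}L_X(e_j)=\lambda(\phi)\,L_X(e_i)$ for every $i$. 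Thus the length vector $v=(L_X(e_i))_i$ is a positive $\lambda(\phi)$-eigenvector of $M$, so by Perron--Frobenius it is the Perron--Frobenius eigenvector of $M$, up to rescaling.

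The second step is to bound the spread of $v$. Irreducibility of $M$ means the digraph on $\{1,\dots,N\}$ with an arrow $i\to j$ whenever $m_{ij}\ge 1$ is strongly connected, so any two indices are joined by a directed path of length $\ell\le N-1$; this gives $(M^\ell)_{ij}\ge1$, whence $\lambda(\phi)^{\ell}v_i=(M^\ell v)_i\ge v_j$. As $\lambda(\phi)>1$ we get $v_j\le\lambda(\phi)^{N-1}v_i$ for all $i,j$, that is
\[
\frac{\max_i v_i}{\min_i v_i}\le\lambda(\phi)^{\,D-1}.
\]

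Finally I would compare $X$ with $X_\Delta$. These lie in the same open simplex, so the change-of-markings map between them is the identity on the underlying graph of groups; since every edge of $X_\Delta$ has length $1$, this map has Lipschitz constant $\max_i L_X(e_i)$ viewed as $X_\Delta\to X$ and $1/\min_i L_X(e_i)$ viewed as $X\to X_\Delta$. Hence $\Lambda(X_\Delta,X)\,\Lambda(X,X_\Delta)\le \max_i L_X(e_i)/\min_i L_X(e_i)$, a scale-invariant quantity, which we may bound by $\lambda(\phi)^{D-1}$ on taking $L_X=v$. Feeding this into Lemma~\ref{Lin} and using $\lambda_\phi(X)=\lambda(\phi)$ gives
\[
\lambda_\phi(X_\Delta)\le\lambda_\phi(X)\,\Lambda(X_\Delta,X)\,\Lambda(X,X_\Delta)\le\lambda(\phi)\cdot\lambda(\phi)^{D-1}=\lambda(\phi)^{D},
\]
which is comfortably within the claimed bound $9\dim\O(\G)\,\lambda(\phi)^{3D+2}$ since $\lambda(\phi)>1$ and $\dim\O(\G)=D\ge 1$.

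The only step with real content is the first: that at a minimally displaced point of an irreducible automorphism the metric is pinned down, up to scale, as the Perron--Frobenius eigenvector of the transition matrix of a train track representative. But this is immediate from the already recorded fact (Remark~\ref{RB3}) that the tension graph there is the entire graph. Everything after that is elementary linear algebra plus the triangle inequality, so I do not expect a genuine obstacle; the points needing a little care are that the transition matrix is irreducible \emph{as a matrix} (precisely what irreducibility of $\phi$ provides) and the scale-invariance used in the last comparison.
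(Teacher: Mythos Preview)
Your proof is correct and takes a genuinely different, more direct route than the paper. The paper invokes the $\epsilon$-thickness of minimally displaced points as a black box from~\cite{BestvinaBers}: it quotes that every loop in $X_{\min}$ has length at least $\vol(X_{\min})/C(\phi)$ with $C(\phi)=3D\lambda(\phi)^{3D+1}$, combines this with the Sausage Lemma (candidates have simplicial length at most $3$) to bound $\Lambda(X_{\min},X_\Delta)\Lambda(X_\Delta,X_{\min})$, and then applies Lemma~\ref{Lin}. You instead prove thickness in situ by reading the edge-lengths at a min point as a Perron--Frobenius eigenvector and bounding its spread. This is sharper (you get $\lambda(\phi)^{D}$ rather than $9D\lambda(\phi)^{3D+2}$) and entirely elementary once the transition matrix is known to be irreducible.

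The one point that deserves a sentence more than you give it is precisely that last clause. Irreducibility of $\phi$ in the sense of this paper only forbids invariant sub-graphs carrying the axis of a hyperbolic element; it does not, on its face, forbid an $f$-invariant sub-forest each of whose components has at most one non-free vertex. Such a forest would make $M$ reducible without witnessing reducibility of $\phi$. The gap closes easily when $\lambda(\phi)>1$: lifting to the universal cover, each component of the invariant forest has finite diameter, a train track map sends components into components while stretching embedded arcs by $\lambda(\phi)$, and iterating around a cycle of components yields $D(\wt T)\ge\lambda(\phi)^k D(\wt T)$, a contradiction. Since the lemma is only applied under the standing hypothesis $\lambda(\phi)>1$, this suffices; but your parenthetical ``precisely what irreducibility of $\phi$ provides'' underplays the step.
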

\begin{proof}
	Let $X_{\min}$ denote a point of $\Delta$ which is minimally displaced by $\phi$. 
	We will use the fact that $X_{\min}$ is $\epsilon$-thick, as in 
        \cite[Proposition~10]{BestvinaBers} (it is proved there in $CV_n$, but the proof is the same in this
        context, see also~\cite[Section 8]{FM13}). That is, since $\phi$ is irreducible, there
        is a lower bound on $L_X(\gamma)/\vol(X)$ that
        depends only on  $\lambda(\phi)$ (and not on $X\in\O(\G)$ nor on the non-elliptic
        element 
        $\gamma$). Concretely, this lower bound can be taken to be the reciprocal of
        $C(\phi)=3\dim(\O(\G))\lambda(\phi)^{3\dim(\O(\G))+1}$. If we normalise
        $X_{\min}$ to have volume $1$ then $\Lambda(X_\Delta,X_{\min})\leq 1$. Moreover,
        since the stretching factor
        $\Lambda$ is realised by candidate loops of simplicial length at most $3$ (by the Sausage
        Lemma~\cite[Theorem~$9.10$]{FM13}), 
	we can then deduce that, 
	$$
	\Lambda(X_{\min}, X_{\Delta}) \Lambda(X_{\Delta}, X_{\min}) \leq
        3C(\phi) \Lambda(X_{\Delta}, X_{\min})\leq 3C(\phi).
	$$
	
	The result now follows from Lemma~\ref{Lin}.

\end{proof}

\begin{cor}
	\label{unif}
		Let $[\phi]\in\Out(\G)$ be an irreducible element with $\lambda (\phi)  > 1$. Then $\Min(\phi)$ is uniformly  locally finite.
\end{cor}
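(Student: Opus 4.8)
\textbf{Proof proposal for Corollary~\ref{unif}.}

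The plan is to combine the local finiteness from Theorem~\ref{locfin} with the quantitative estimates of Lemmas~\ref{numfold}, \ref{numcrit}, and \ref{numirred} to produce a bound on the number of adjacent minimally displaced simplices that depends only on $\lambda(\phi)$ and $D=D(\G)$. The starting point is Theorem~\ref{locfin}: given a simplex $\Delta$ intersecting $\Min(\phi)$ and any simplex $\Delta^1$ intersecting $\Min(\phi)$ with $\Delta$ as a face, we know $\Delta^1$ lies in the simplex critical neighbourhood of $\Delta$ of radius $2D^2$. Thus it suffices to bound the total number of simplices in such a neighbourhood, and this reduces to bounding, at each stage, the number of simplex critical turns available for folding.

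First I would bound the number of simplices in the radius-$1$ simplex critical neighbourhood of an arbitrary simplex $\Delta'$ appearing along the way. By Lemma~\ref{numcrit}, $|\mathcal C_{\Delta'}|\leq (10K)!\,\lambda_\phi(X_{\Delta'})$, and each critical turn gives rise to at most one folded simplex, so the radius-$1$ neighbourhood has size at most $1+(10K)!\,\lambda_\phi(X_{\Delta'})$. The issue is that $\lambda_\phi(X_{\Delta'})$ is not a priori controlled for simplices $\Delta'$ deep inside the neighbourhood — these need not themselves meet $\Min(\phi)$ (indeed, as the introduction warns, a critical fold may increase the displacement). So the key point is to control $\lambda_\phi(X_{\Delta'})$ for every $\Delta'$ reachable by at most $2D^2$ simplex critical folds starting from $\Delta$. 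Here is where Lemma~\ref{numfold} does the work: passing between a simplex and a face (in either direction) changes $\lambda_\phi$ of the centre by a factor of at most $2D$. A single simplex critical fold changes the simplex to one having the original as a face (or conversely), so after $m$ folds we get $\lambda_\phi(X_{\Delta'})\leq (2D)^m \lambda_\phi(X_\Delta)$. With $m\leq 2D^2$ this yields $\lambda_\phi(X_{\Delta'})\leq (2D)^{2D^2}\lambda_\phi(X_\Delta)$ for every $\Delta'$ in the relevant neighbourhood.

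Next I would bound $\lambda_\phi(X_\Delta)$ itself: since $\Delta$ contains a point of $\Min(\phi)$ and $\phi$ is irreducible, Lemma~\ref{numirred} gives $\lambda_\phi(X_\Delta)\leq 9D\,\lambda(\phi)^{3D+2}$, which depends only on $\lambda(\phi)$ and $D$. Combining the last three estimates, for every $\Delta'$ reachable by at most $2D^2$ simplex critical folds we obtain a uniform bound
$$
\lambda_\phi(X_{\Delta'})\leq (2D)^{2D^2}\cdot 9D\,\lambda(\phi)^{3D+2}=:B(\phi,D),
$$
and hence the radius-$1$ critical neighbourhood of any such $\Delta'$ contains at most $1+(10K)!\,B(\phi,D)$ simplices. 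Iterating this $2D^2$ times, the radius-$2D^2$ critical neighbourhood of $\Delta$ contains at most $\big(1+(10K)!\,B(\phi,D)\big)^{2D^2}$ simplices, a quantity depending only on $\lambda(\phi)$ and $D(\G)$. Finally, Theorem~\ref{locfin} also records that the simplices having $\Delta$ as a proper face number at most $2^{D}$; adding these in, we conclude that the number of simplices adjacent to $\Delta$ that intersect $\Min(\phi)$ is bounded by a function of $\lambda(\phi)$ and $D(\G)$ alone. The same bound transfers to $\O_1(\G)$ since $\lambda_\phi$ is scale invariant, so $\Min(\phi)$ is uniformly locally finite.

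I expect the main obstacle to be exactly the propagation-of-displacement step: one must be careful that the $2D^2$-fold iteration of the factor-$2D$ distortion in Lemma~\ref{numfold} is legitimate, i.e.\ that at each stage the new simplex really does stand in a face relation with the previous one (so Lemma~\ref{numfold} applies), and that the count of critical turns via Lemma~\ref{numcrit} is valid at each intermediate simplex regardless of whether it meets $\Min(\phi)$ — which it is, since Lemma~\ref{numcrit} has no such hypothesis. Everything else is bookkeeping of explicit constants, which I would not grind through in detail.
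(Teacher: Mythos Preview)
Your proposal is correct and follows essentially the same route as the paper's proof: reduce via Theorem~\ref{locfin} to bounding the simplex critical neighbourhood of radius $2D^2$, propagate the centre-displacement through the folding sequence using Lemma~\ref{numfold} (picking up the factor $(2D)^{2D^2}$), anchor the initial displacement with Lemma~\ref{numirred}, and count critical turns at each stage with Lemma~\ref{numcrit}. Your care about the face relation at each fold and about Lemma~\ref{numcrit} not requiring the simplex to meet $\Min(\phi)$ is exactly right; one cosmetic point is that the constant $K=D+M+1$ also involves $M(\G)$, so the final bound depends on $\G$ (not literally only on $D$ and $\lambda(\phi)$), but the paper makes the same elision.
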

\begin{proof}
	By Theorem~\ref{locfin}, it is sufficient to show that the simplex critical
        neighbourhood $N$ of radius $2D^2$, of a minimally displaced simplex $\Delta_0$,
        contains a uniformly bounded number of simplices (the number of faces of a simplex is
        always uniformly bounded). 
	Hence it suffices to uniformly bound the cardinality of $\mathcal C_\Delta$  for each
        simplex we encounter in $N$.

        By Lemma~\ref{numfold}, $\lambda_\phi(X_\Delta)\leq
        (2D)^{2D^2}\lambda_{\phi}(X_{\Delta_0})$, which is uniformly bounded by Lemma~\ref{numirred}.
        Lemma~\ref{numcrit} completes the proof.
	\end{proof}

\section{Definitions and basic results used in the paper}%\addtocounter{section}{1}
\label{definitions}
Our notation and definitions are quite standard. We briefly recall them here, referring the reader
to~\cite{FM18I} for a detailed discussion.

\begin{defn}
  A free splitting $\G$ of a group $G$ is a decomposition of $G$ as a free product $G_1*\cdots
  *G_k*F_n$ where $F_n$ is the free group of rank $n$. We admit the {\em trivial} splitting
  $G=F_n$. We do not require that the groups $G_i$'s are indecomposable. 
\end{defn}

\begin{defn}
  A simplicial $G$-tree is a simplicial tree $T$ endowed with a faithful simplicial action of
  $G$. $T$ is minimal if it  has no proper $G$-invariant sub-tree. A $G$-graph is a graph of
  groups whose fundamental group, as graph of groups, is isomorphic to $G$. The action of $G$
  on a $G$-tree, is called marking.
  \end{defn}

  \begin{defn}\label{G-graphs}
    Let $\G$ be a free splitting of $G$. In terms of Bass-Serre theory, a $\G$-tree is
    the tree dual to $\G$, and  a $\G$-graph is the corresponding graphs of groups.
    More explicitly, a 
      $\G$-tree is a simplicial $G$-tree $T$ such that:
   \begin{itemize}
  \item For every $G_i$ there is exactly one orbit of vertices whose stabilizer is conjugate to
    $G_i$. Such vertices are called {\em non-free}. Other vertices are called {\em free}.
  \item $T$ has trivial edge stabilisers.
  \end{itemize}
A $\G$-graph dual to a $\G$-tree. Namely, it is a finite connected $G$-graph of groups $X$, along with an isomorphism, $\Psi_X: G \to \pi_1(X)$ - a marking - such that:
\begin{itemize}
\item $X$ has trivial edge-groups;
\item the fundamental group of $X$ as a topological space is $F_n$;
\item the splitting given by the vertex groups is equivalent, via $\Psi_X$, to $\G$. That is, $\Psi_X$ restricts to a bijection from the conjugacy classes of the $G_i$ to the vertex groups of $X$.
\end{itemize}
\end{defn}

The universal cover of a $\G$-graph is a $\G$-tree and the $G$-quotient of a $\G$-tree is a $\G$-graph.

\begin{defn}
Let $\G$ be a splitting of a group $G$. The Outer Space of $\G$, also known as deformation space of
$\G$, and denoted $\O(\G)$ is the set of classes of minimal, simplicial, metric $\G$-graphs $X$ with no
redundant vertex. (The equivalence relation is
given by $G$-isometries.)    

We denote by $\O_1(\G)$ the volume $1$ subset of $\O(\G)$.

\end{defn}
$\O(\G)$ can be regarded also as set of $\G$-trees, but in the present paper we adopted the
graph view-point.
Given a graph of groups $X$ with trivial edge groups, we denote by $\O(X)$
the corresponding deformation space $\O(\pi_1(X))$ (we notice that $X\in\O(X)$ if $X$ is a core
graph with no redundant vertex) where $\pi_1(X)$ is endowed with the splitting given by vertex groups.
We refer the reader to~\cite{FM13,FM18I,GuirardelLevitt} for more details
on deformation spaces.

\begin{defn}
  Let $X\in\O(\G)$. The simplex $\Delta_X$ is the set of marked metric graphs obtained from
  $X$ by just changing edge-lengths. Since edge-lengths are strictly positive we think $\Delta_X$
  as (a cone over) an open simplex. Given a simplex $\Delta$ one can consider the 
  the closure of $\Delta\in \O(\G)$ or its simplicial bordification. Namely, faces of
  $\Delta$ come in two flavours: that in $\O(\G)$, called finitary faces,
  and that not in $\O(\G)$ (typically in other deformation spaces) called faces at infinity. 
  
  We also have a simplex $\Delta_1(X)$ in $\O_1(\G)$ - the intersection of $\Delta(X)$ with
  $\O_1(\G)$ - which is a standard  open simplex of one dimension less.
\end{defn}

There are two topologies on $\O(\G)$, both of which restrict to the Euclidean topology on each simplex; these are the weak topology and the axes or Gromov-Hausdorff topology. The topology  induced by the Lipschitz metric is the latter one.

\begin{defn}
    Let $G$ be endowed with the splitting $\G:G=G_1*\dots*G_i*F_n$.
The group of automorphisms of $G$
  that preserve the set of conjugacy classes of the $G_i$'s is
  denoted by $\operatorname{Aut}(\G)$. We set
  $\operatorname{Out}(\G)=\operatorname{Aut}(\G)/\operatorname{Inn}(G)$.
\end{defn}

The group $\Aut(\G)$ naturally acts on $\O(\G)$ by precomposition on marking, and
$\operatorname{Inn}(\G)$ acts trivially, so $\Out(\G)$ acts on $\O(\G)$.

Since the volume is invariant under this action, we also get an action of $\Aut(\G)$ and  $\Out(\G)$ on $\O_1(\G)$.

\begin{defn} Given a splitting $\G$ of $G$, and  $X,Y\in\O(\G)$, a map $f:X\to  Y$ is called an
  $\O$-map at the level of the tree, if it is Lipschitz-continuous and $G$-equivariant (resp., a $\G$-map between graph of groups, it called an $\O$-map if it is the projection of an $\O$-map at level of the universal covers). The Lipschitz constant of $f$ is denoted by $\Lip(f)$.

\end{defn}

\begin{defn}
Let $X,Y$ be two metric graphs.
A Lipschitz-continuous  map $f:X\to Y$ is 
{\em straight} if it has constant speed on edges, that is to say, for any edge $e$
of $X$ there is a non-negative number $\lambda_e(f)$ such that edge $e$ is uniformly stretched
by a factor $\lambda_e(f)$.
 A straight map between elements of $\O(\G)$ is always supposed to be an $\O$-map.
\end{defn}

\begin{rem} $\O$-maps always exist and the images of non-free vertices are
  determined a priori by equivariance 
  (see for instance~\cite{FM13}).
  For any $\O$-map $f$ there is a unique straight map denoted by $\PL(f)$, which is homotopic,
  relative to vertices, to $f$. We have $\Lip(\PL(f))\leq \Lip(f)$.
\end{rem}

\begin{defn}\label{deftg}
  Let $f:X\to Y$ be a straight map. We set $\lambda_{\max}(f)=\max_{e}\lambda_e(f)=\Lip(f)$ and 
  define the {\em tension  graph} of $f$ as the set
$$\{e \text{ edge of } X : \lambda_e(f)=\lambda_{\max}\}.$$
\end{defn}

\begin{defn}
  Given $X,Y\in\O(\G)$ we define $\Lambda(X,Y)$ as the infimum of Lipschitz constants of
  $\O$-maps from $X$ to $Y$. That inf is in fact a minimum and coincides with $\max_\gamma
  \frac{L_Y(\gamma)}{L_X(\gamma))}$ where $\gamma$ runs on the set of loops in $X$ (seen as a
  graph). (See for instance~\cite{FM11,FM13,FM18I}.)
\end{defn}

\begin{defn}
A gate structure on a graph of groups $X$ is a $G$-equivariant equivalence relation of germs of edges at
vertices of the universal cover, $\widetilde{X}$, of $X$. A \textit{train track structure} on a graph of groups $X$ is a gate structure on $X$ with at least two gates at each vertex.
\end{defn}

\begin{rem}
For a straight map $f:X\to Y$, we consider two different gate structures, which we denote by
 $\sim_f$ and $\langle\sim_{f^k}\rangle$, the latter being defined only if $X=Y$. Two germs of $X$ are $\sim_f$-equivalent, if they have the same  non-collapsed $f$-image and they are $\langle\sim_{f^k}\rangle$-equivalent, if they have the same non-collapsed $f^k$-image for some positive integer $k$. 

This second gate structure -  $\langle\sim_{f^k}\rangle$ - only makes sense if $Y$ is the same topological object as $X$, so that we may iterate $f$. However, $Y$ will usually be a different point of $\O(\G)$ since the $G$-action will be different.

We refer to $\sim_f$, as the gate structure which is induced by $f$.
\end{rem} 

\begin{defn}
  A turn of $X\in\O(\G)$ is (the $G_v$-orbit of) an unoriented pair of germs of edges based
  at a vertex $v$ of $X$.  A turn is legal if its germs are not in the same gate. A simplicial
  path in $X$ is legal if it crosses only legal turns. Legality here depends on the gate structure, which for us will either be the $\sim_f$ or $\langle\sim_{f^k}\rangle$ structure for some $\O$-map $f$ with domain $X$.
\end{defn}

\begin{defn}\label{defnmo}
  Let $X,Y\in\O(\G)$. A straight map $f:X\to Y$ is said to be optimal if
  $\Lip(f)=\Lambda(X,Y)$ and every vertex of the tension graph is at least two-gated (i.e. the gate structure $\sim_f$ is a train track structure on the tension graph). An optimal map is minimal if every edge of the tension graph extends to a legal loop in the tension graph (not all optimal maps are minimal, but minimal optimal maps always exist (see~\cite{FM18I})).
\end{defn}

\begin{defn}
  Given $[\phi]\in\Out(\G)$ and $X\in\O(\G)$ we say that an $\O$-map $f:X\to \phi X$
  represents $\phi$. 
 Note  that $X$ and $\phi X$ are the same graph with different markings, so we sometimes abuse notation by saying that $f$ is a map $f: X \to X$ which represents $\phi$.
  In this situation we can speak of the $\langle\sim_{f^k}\rangle$ gate structure. 
\end{defn}
  Any $\phi$ is represented by a minimal optimal map (see~\cite{FM18I}).

\begin{defn}
	We call $[\phi] \in \Out(\G)$ {\em reducible} if there exists an $\O$ map $f: X \to X$
        representing $\phi$, a lift $\wt f:\wt X\to \wt X$ and a $G$-subforest, $Y \subsetneq \wt
        X$ which is $\wt f$-invariant and contains the axis of a hyperbolic element. Otherwise $[\phi]$ is called irreducible.
\end{defn}

\begin{rem}
	An automorphism $\phi$ is called {\em iwip} - irreducible with irreducible powers - if every positive iterate of $\phi$ is irreducible. We mention this for completeness, but we are concerned with the general irreducible class for this paper.
\end{rem}

\begin{defn}
A straight map $f:X\to X$ representing $\phi$ is a train track map, if there is a train track structure on $X$ so that:
\begin{itemize}
	\item  $f$ maps edges to legal paths
	\item  If $f(v)$ is a vertex, then $f$ maps inequivalent germs at $v$ to inequivalent
	germs at $f(v)$.
\end{itemize}
\end{defn}

\begin{rem} (See~\cite{FM13, FM18I} for more details):
\begin{enumerate}
\item If  $f:X\to X$ is train track map representing $\phi$ (with respect to some gate structure), then it is a train track map with respect to the $\langle\sim_{f^k}\rangle$ gate structure.
\item Irreducible elements of $\Out(\G)$ admit train track representatives. 
\end{enumerate}
\end{rem}

\begin{defn}
  Given $[\phi]\in\Out(\G)$ the displacement function $$\lambda_\phi:\O(\G)\to\O(\G)$$
  is defined by $$\lambda_\phi(X)=\Lambda(X,\phi X).$$ If $f:X\to X$ is any $\O$-map
  representing $\phi$, then $$\lambda_\phi(X)=\sup_{\gamma}\frac{L_X(\phi\gamma)}{L_X(\gamma)}$$
where the sup is taken over all loops $\gamma$ in $X$ (and it is actually a maximun
by~\cite{FM13,FM18I}) and $L_X(\gamma)$ denotes the reduced length of $\gamma$. 
For a simplex $\Delta$ we define $$\lambda_\phi(\Delta)=\inf_{X\in\Delta}\lambda_\phi(X)$$
and $$\lambda(\phi)=\inf_{X\in\O(\G)}\lambda_\phi(X).$$
\end{defn}

\begin{rem}
	Note that the displacement function - $\lambda_{\phi}$ - is invariant under change of volume, so one can work interchangeably between $\O(\G)$ and $\O_1(\G)$. 
\end{rem}

%\begin{defn}
%  Given $[\phi]\in\Out(\Gamma)$ and $X\in\overline{\Delta}^\infty$, we say that $X$ has not
%  jumped in $\Delta$ if there is a sequence $X_i\in\Delta$ such that
%  $$\lambda_\phi(X_i)\to\lambda_\phi(X).$$
%\end{defn}
%(See~\cite{FM18I} for more details on jumping phenomena).

%\begin{defn}
%  We denote by $\Min(\phi)$ the set of points that realises $\lambda(\phi)$. It is always
%  non-empty if we allow non-jumping points in the simplicial bordification of $\O(\Gamma)$.
%\end{defn}

\begin{thm}[\cite{BestvinaBers,FM13,FM18I}]
	Given $[\phi] \in \Out(\G)$ we define, 
	$$\Min(\phi) = \{ X \in \O(\G) \ : \ \lambda_{\phi}(X) = \lambda(
	\phi)\}.$$
	
	(Similarly for $\O_1(\G)$.) Then, if $\phi$ is irreducible, $\Min(\phi)$ is non-empty and coincides with the set of points which admit a train track map representing $\phi$.
\end{thm}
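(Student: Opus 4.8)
The plan is to bridge the displacement function and the algebraic growth rate of $\phi$, taking the existence of train track representatives as the only external input. Set $\overline\lambda(\phi)=\lim_{n}\big(\sup_{\gamma}L_X(\phi^n\gamma)/L_X(\gamma)\big)^{1/n}$; the limit exists by submultiplicativity, is independent of the choice of $X\in\O(\G)$ (any two points are bi-Lipschitz), and is a conjugacy invariant. From $L_X(\phi^n\gamma)\le\lambda_\phi(X)^nL_X(\gamma)$ for every loop $\gamma$ one gets $\lambda_\phi(X)\ge\overline\lambda(\phi)$ for every $X$, hence $\lambda(\phi)\ge\overline\lambda(\phi)$.

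First I would prove that every point supporting a train track map lies in $\Min(\phi)$ and realises $\overline\lambda(\phi)$; this also yields non-emptiness once we invoke the Bestvina--Handel train track algorithm (valid in the present generality, see~\cite{FM13}), which supplies some $X$ and a train track map $f\colon X\to X$ representing $\phi$. Indeed $f=\PL(f)$ is straight and optimal, so $\lambda_\phi(X)=\Lip(f)=:\mu$; choosing a legal loop $\gamma$ (one exists because every vertex has at least two gates), the train track property makes $f^n(\gamma)$ legal, hence reduced, whence $L_X(\phi^n\gamma)=\mu^nL_X(\gamma)$ and therefore $\overline\lambda(\phi)\ge\mu=\lambda_\phi(X)\ge\overline\lambda(\phi)$. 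Thus $\mu=\overline\lambda(\phi)$, and combined with $\lambda(\phi)\ge\overline\lambda(\phi)=\lambda_\phi(X)\ge\lambda(\phi)$ this gives $\lambda(\phi)=\overline\lambda(\phi)$ and $X\in\Min(\phi)$. Running the same computation at any point $Y$ carrying a train track map gives $\lambda_\phi(Y)=\lambda(\phi)$, so the set of train track points is contained in $\Min(\phi)$; in particular $\Min(\phi)\neq\emptyset$.

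For the reverse inclusion, take $X\in\Min(\phi)$ and a minimal optimal map $f\colon X\to X$ representing $\phi$, so $\Lip(f)=\lambda_\phi(X)=\lambda(\phi)=\overline\lambda(\phi)$. Irreducibility forces the tension graph of $f$ to be all of $X$, so every vertex is at least two-gated for $\sim_f$ and every edge extends to a legal loop. One then argues that $f$ is a train track map for the $\langle\sim_{f^k}\rangle$-structure: by the previous paragraph $\Lip(\PL(f^n))=\Lambda(X,\phi^nX)=\lambda(\phi)^n$ for all $n$, so for each $n$ there is a loop realising $\Lambda(X,\phi^nX)$, i.e.\ whose $\phi^n$-image is reduced of maximal length; the Bounded Cancellation Lemma (e.g.~\cite[Proposition~3.12]{Horbez}, as used elsewhere in this paper) bounds the cancellation in $f^n$ uniformly in $n$, and letting $n\to\infty$ this prevents the $\langle\sim_{f^k}\rangle$-legality structure from degenerating and shows that $f$ maps every edge to a $\langle\sim_{f^k}\rangle$-legal path (the inequivalent-germs condition being automatic for this gate structure), that is, that $f$ is a train track map. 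Equivalently, one may run the train track construction, folding the $f$-illegal turns of $f$ — which keeps one inside $\Min(\phi)$ by~\cite[Theorem~8.23]{FM13} — and check it stabilises at a train track representative. The statement in $\O_1(\G)$ is immediate since $\lambda_\phi$ is scale invariant.

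The first two paragraphs are essentially formal; the main obstacle is the reverse inclusion, whose real content is the stabilisation of the train track structure under iteration at a minimally stretched optimal map — precisely where the Bounded Cancellation Lemma and a careful treatment of the $\langle\sim_{f^k}\rangle$-gates do the work. For a clean write-up I would simply quote \cite{FM13,FM18I}, where these arguments are carried out in detail (and \cite{BestvinaBers} for an alternative route to non-emptiness, via properness of $\lambda_\phi$ on the thick part).
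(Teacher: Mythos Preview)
The paper does not prove this theorem: it is stated with citations \cite{BestvinaBers,FM13,FM18I} and no proof is given. So your final sentence --- ``for a clean write-up I would simply quote \cite{FM13,FM18I}'' --- is precisely what the paper does.

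That said, since you attempted an argument, let me point out where it stands. The first two paragraphs (defining the growth rate $\overline\lambda(\phi)$, showing $\lambda(\phi)\ge\overline\lambda(\phi)$, and proving that a point carrying a train track realises the infimum) are essentially correct and standard, though you should be explicit that the train track you invoke has constant edge-stretch (the Perron--Frobenius normalisation), since the paper's definition of train track map does not require this and your computation $L_X(\phi^n\gamma)=\mu^n L_X(\gamma)$ needs it.

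The reverse inclusion, however, has a genuine gap. Your bounded-cancellation sketch is too vague to constitute a proof: bounding cancellation in $f^n$ uniformly does not by itself show that every edge maps to a $\langle\sim_{f^k}\rangle$-legal path. Your alternative --- fold illegal turns and stay inside $\Min(\phi)$ --- proves only that \emph{some} point reached by folding supports a train track, not that the original $X$ does; folding changes the underlying graph. The actual argument in \cite{FM13,FM18I} is more delicate (one shows that at a minimally displaced point an optimal map with full tension graph has no edge eventually collapsed and no illegal turn in the image of an edge under any iterate, by exhibiting otherwise a perturbation decreasing displacement). Since you end by deferring to the references anyway, this is moot for the present paper, but you should be aware that the sketch as written does not close the gap.
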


\begin{rem} There is also a generalisation of the previous theorem for reducible automorphisms,
  but in that case $\Min(\phi)$ may be empty in $\O(\G)$. In any case $\Min(\phi)$ is never
  emtpy if we add to $\O(\G)$ the simplicial bordification at infinity.

  While we don't use the following in this paper, it seems worthwhile mentioning that $\Min(\phi)$
 coincides with the set of points supporting partial train-tracks (which reduce to classical
 train-tracks in irreducible case). (See~\cite{FM13,FM18I} for more details).
\end{rem}

\begin{lem}\label{wasfootnote}
  Let $[\phi]\in\Out(\G)$ be an irreducible element and let $X$ be a minimally displaced
  point. Let $f:X\to X$ be an optimal map representing $\phi$, then
  \begin{enumerate}
  \item The tension graph of $f$ is the whole $X$.
  \item If $f$ is train track then it is a minimal optimal map.
  \end{enumerate}
\end{lem}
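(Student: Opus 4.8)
The plan is to prove the two statements in order, using irreducibility crucially in both. For part (1), I would argue by contradiction: suppose the tension graph $T_f$ of an optimal map $f:X\to X$ representing $\phi$ is a proper subgraph of $X$. The complement $X\setminus T_f$ consists of edges that are stretched by a factor strictly less than $\lambda_{\max}(f)=\lambda_\phi(X)=\lambda(\phi)$. The key observation is that since $f$ is optimal and $X$ is minimally displaced, one can shrink the edges in the tension graph slightly (and expand the others to keep volume fixed, or work projectively) and strictly decrease the displacement — this is the standard ``optimality forces the tension graph to carry a legal loop'' argument, and in the irreducible case one shows more: if $T_f\neq X$, then $f$ maps $T_f$ into itself up to the part of lower stretch, and one extracts an invariant proper subforest in the universal cover $\widetilde X$ containing an axis, contradicting irreducibility. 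Concretely, I would take a legal loop $\gamma$ in the tension graph (which exists because every vertex of $T_f$ is at least two-gated for a minimal optimal map, or by passing to a minimal optimal representative which exists by the cited results of~\cite{FM18I}); its iterated $f$-images stay in $T_f$ (since legal loops in the tension graph are stretched maximally and $f$ sends legal paths across turns in $T_f$), so $\bigcup_n f^n(\gamma)$ is an $f$-invariant subgraph carrying a hyperbolic element, hence by irreducibility equals $X$, forcing $T_f=X$.

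For part (2), suppose $f:X\to X$ is a train track map representing $\phi$. By part (1) the tension graph is all of $X$, so $\lambda_e(f)=\Lip(f)$ for every edge $e$; hence the only thing to check for ``minimal optimal'' (Definition~\ref{defnmo}) is that every edge extends to a legal loop in the tension graph $X$. But a train track map comes with a train track structure, i.e.\ a gate structure with at least two gates at every vertex, and for which $f$ maps edges to legal paths. Given any edge $e$, I would build a legal loop through $e$ by repeatedly extending: at each endpoint there are at least two gates, so $e$ can always be extended by an edge in a different gate, keeping the path legal; since the graph is finite this extension process must close up into a legal loop (or one invokes the standard fact that in a finite graph with a train track structure every edge lies on a legal circuit). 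Thus every edge of the tension graph $X$ extends to a legal loop in $X$, so $f$ is a minimal optimal map. One should also note $f$ is optimal to begin with: $\Lip(f)=\Lambda(X,\phi X)$ because a train track map on a minimally displaced point realises the minimal displacement (train track maps have no legal loop that is stretched more than $\lambda(\phi)$, and the maximal stretch is realised by a legal loop), and every vertex is two-gated by definition of the train track structure.

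The main obstacle I anticipate is part (1): making precise the passage from ``the tension graph is proper'' to ``there is an invariant subforest in $\widetilde X$ containing an axis.'' The subtlety is that the tension graph need not be $f$-invariant on the nose — $f$ could send a maximally stretched edge through edges of lower stretch — so one genuinely needs the two-gatedness of the tension graph (equivalently, passing to a minimal optimal representative) to produce a legal loop whose forward orbit stays inside the tension graph, and then irreducibility closes the argument. Everything else is bookkeeping with the definitions of optimal, minimal optimal, and train track maps, together with the cited existence results from~\cite{FM13,FM18I}.
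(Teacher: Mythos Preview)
Your plan has a genuine gap in each part.

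\medskip

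\textbf{Part (2).} The ``standard fact'' you invoke is false: in a finite graph equipped with a train track structure (two gates at every vertex), it is \emph{not} true that every edge lies on a legal loop. A concrete counterexample is a theta graph with edges $a,b,c$ between vertices $u,v$, gates $\{a\},\{b,c\}$ at $u$ and $\{\bar a,\bar b\},\{\bar c\}$ at $v$: the only legal loop is $a\bar c$, and the edge $b$ lies on no legal loop (any legal extension through $b$ is forced into the periodic tail $\dots a\bar c a\bar c\dots$). So your extension argument cannot close up through an arbitrary edge. Irreducibility is essential here and you never use it. The paper's proof of (2) proceeds quite differently: it cites \cite{FM18I,FM13} for the fact that if a train track map is not minimal optimal, then arbitrarily close to $X$ in $\Min(\phi)$ one can find optimal maps whose tension graph is proper, contradicting part (1) --- so irreducibility enters through (1). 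An alternative repair that stays closer to your line would be to take the subgraph of edges lying on some $\langle\sim_{f^k}\rangle$-legal loop, observe it is $f$-invariant (because $f$ sends legal paths to legal paths), show it is nonempty, and then invoke irreducibility; but you did not do this.

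\medskip

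\textbf{Part (1).} You correctly identify the obstacle --- the tension graph $T_f$ need not be $f$-invariant --- but you do not resolve it. Your parenthetical justification (``legal loops in the tension graph are stretched maximally and $f$ sends legal paths across turns in $T_f$'') does not explain why $f(\gamma)\subset T_f$; a maximally stretched edge can map across edges of lower stretch. Passing to a minimal optimal representative changes the map and hence the tension graph, so it does not prove the claim about the given $f$. The paper handles this by citing \cite[Lemma~4.16]{FM18I}, which asserts that the tension graph of an optimal map contains a $\phi$-invariant subgraph; that lemma has real content and is what you are missing. Once one has such an invariant subgraph, irreducibility finishes as you say.
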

\begin{proof}
By \cite[Lemma~4.16]{FM18I} (see also\cite{FM13}), since $f$ is an optimal map representing
  $\phi$, its tension graph contains an invariant sub-graph
  $\phi$. By irreducibility, that sub-graph must be the whole $X$. In~\cite{FM18I} (or also in~\cite{FM13}) it is proved
  that if $f$ is a train track, and
  it is not minimal, then any neighborhood of $X$ in the $\Min(\phi)$ supports an optimal map
  representing $\phi$ whose tension graph is  not the whole $X$, contradicting point $(1)$.
\end{proof}

\providecommand{\bysame}{\leavevmode\hbox to3em{\hrulefill}\thinspace}
\providecommand{\MR}{\relax\ifhmode\unskip\space\fi MR }
% \MRhref is called by the amsart/book/proc definition of \MR.
\providecommand{\MRhref}[2]{%
  \href{http://www.ams.org/mathscinet-getitem?mr=#1}{#2}
}
\providecommand{\href}[2]{#2}

\end{document}